\newtheorem{theorem}{Theorem}[chapter]
\newtheorem{lem}[theorem]{Lemma}
\newtheorem{prop}[theorem]{Proposition}
\newtheorem{cor}[theorem]{Corollary}
\newtheorem{defn}[theorem]{Definition}
\theoremstyle{remark}
\newtheorem{ex}{Ex.}[chapter]
\numberwithin{equation}{chapter}
\newcommand{\half}{1/2}
\newcommand{\ra}{\rightarrow}
\newcommand{\C}{{\mathbb{C}}}
\newcommand{\HH}{{\mathbb{H}}}
\newcommand{\Ad}{\text{Ad}}
\newcommand{\lb}{\langle}
\newcommand{\rb}{\rangle}
\newcommand{\mg}{\mathfrak{g}}
\newcommand{\mh}{\mathfrak{h}}
\newcommand{\mk}{\mathfrak{k}}
\newcommand{\ms}{\mathfrak{s}}
\newcommand{\R}{\mathbb{R}}
\newcommand{\diag}{\text{diag}}
\newcommand{\ii}{\mathbf{i}}
\newcommand{\jj}{\mathbf{j}}
\newcommand{\kk}{\mathbf{k}}
\newcommand{\ad}{\text{ad}}
\newcommand{\ml}{\mathfrak{l}}
\begin{document}

\title{Roots of a Compact Lie Group}

\author {Kristopher Tapp}

\maketitle
This expository article introduces the topic of roots in a compact Lie group.  Compared to the many other treatments of this standard topic, I intended for mine to be relatively elementary, example-driven, and free of unnecessary abstractions.  Some familiarity with matrix groups and with maximal tori is assumed.

This article is self-contained, but is also intended to serve as a supplemental $10^\text{th}$ chapter of an eventual new edition of my textbook, ``Matrix Groups for Undergraduates'' (AMS, 2005).  All external references are to chapters 1-9 of this textbook.
\setcounter{chapter}{9}
\chapter{Roots}
By the classification theorem, the ``classical'' compact Lie groups, $SO(n),SU(n),$ and $Sp(n)$, together with the five exceptional groups, form the building blocks of all compact Lie groups.  In this chapter, we will use roots to better understand the Lie bracket operation in the Lie algebra, $\mg$, of a classical or general compact Lie group $G$.

Let $\tau$ denote the Lie algebra of a maximal torus, $T$, of $G$.  The roots of $G$ will be defined as a finite collection of linear functions from $\tau$ to $\R$ which together determine all brackets $[X,V]$ with $X\in\tau$ and $V\perp\tau$.  We will eventually discover that the roots determine \emph{all} brackets in $\mg$, so roots provide an extremely useful method of encoding and understanding the entire bracket operation in $\mg$.

This chapter is organizes as follows.  First we will explicitly describe the roots and the bracket operation for $G=SU(n)$.  Next we will define and study the roots of an arbitrary compact Lie group.  We will then apply this general theory to describe the roots and the bracket operation for $G=SO(n)$ and $G=Sp(n)$.  We will then define the ``Weil Group'' of $G$,  and  roughly indicate how the theory of roots leads to a proof of the classification theorem.  Finally, we will define the ``complexification'' of a Lie algebra, to build a bridge between this book and more advanced books which typically emphasize roots of a complexified Lie algebra.

\section{The structure of $\mg=su(n)$}
Let $G=SU(n)$, so $\mg=su(n)$.  In this case, recall that the Lie algebra of the standard maximal torus equals:
$$\tau=\{\diag(\lambda_1\ii,...,\lambda_n\ii)\mid \lambda_1+\cdots+\lambda_n=0\}.$$
For each pair $(i,j)$ of distinct integers between $1$ and $n$, let $H_{ij}\in\tau$ denote the matrix with $\ii$ in position $(i,i)$ and $-\ii$ in position $(j,j)$. Let $E_{ij}$ denote the matrix in $\mg$ with $1$ in position $(i,j)$ and $-1$ in position $(j,i)$.  Let $F_{ij}$ denote the matrix in $\mg$ with $\ii$ in positions $(i,j)$ and  $(j,i)$.  Notice $\{H_{12},H_{23},...,H_{(n-1)n}\}$ is a basis for $\tau$, and that the $E$'s and $F's$ for which $i<j$ together form a basis of $\tau^\perp$.  We've arrived at a basis for $\mg$, which in the case $n=3$ looks like:
\scalebox{0.75}{\parbox{\textwidth}{
\begin{align*}
H_{12} &= \left(\begin{matrix} \ii & 0 & 0 \\ 0 & -\ii & 0 \\ 0 & 0 & 0\end{matrix}\right),
& E_{12} &= \left(\begin{matrix} 0 & 1 & 0 \\ -1 & 0 & 0 \\ 0 & 0 & 0\end{matrix}\right),
& E_{23} &= \left(\begin{matrix} 0 & 0 & 0 \\ 0 & 0 & 1 \\ 0 & -1 & 0\end{matrix}\right),
& E_{13} &= \left(\begin{matrix} 0 & 0 & 1 \\ 0 & 0 & 0 \\ -1 & 0 & 0\end{matrix}\right), \\
H_{23} &= \left(\begin{matrix} 0 & 0 & 0 \\ 0 & \ii & 0 \\ 0 & 0 & -\ii\end{matrix}\right),
& F_{12} &= \left(\begin{matrix} 0 & \ii & 0 \\ \ii & 0 & 0 \\ 0 & 0 & 0\end{matrix}\right),
& F_{23} &= \left(\begin{matrix} 0 & 0 & 0 \\ 0 & 0 & \ii \\ 0 & \ii & 0\end{matrix}\right),
& F_{13} &= \left(\begin{matrix} 0 & 0 & \ii \\ 0 & 0 & 0 \\ \ii & 0 & 0\end{matrix}\right).
\end{align*}}}

The bracket of any pair of these basis elements is given by Table~\ref{tablesu3}.
\begin{table}[hb]\begin{center}\begin{tabular}{ | c || c | c || c | c || c | c || c | c |} \hline
   $\mathbf{[\cdot,\cdot]}$ & $\mathbf{H_{12}}$ & $\mathbf{H_{23}}$ & $\mathbf{E_{12}}$ & $\mathbf{F_{12}}$ & $\mathbf{E_{23}}$ & $\mathbf{F_{23}}$ & $\mathbf{E_{13}}$ & $\mathbf{F_{13}}$ \\ \hline\hline
   $\mathbf{H_{12}}$ & $0$ & $0$ & $2F_{12}$ & $-2E_{12}$ & $-F_{23}$ & $E_{23}$ & $F_{13}$ & $-E_{13}$ \\ \hline
   $\mathbf{H_{23}}$ & * & $0$ & $-F_{12}$ & $E_{12}$ & $2F_{23}$ & $-2E_{23}$ & $F_{13}$ & $-E_{13}$ \\ \hline\hline
   $\mathbf{E_{12}}$ & * & * & $0$ & $2H_{12}$ & $E_{13}$ & $F_{13}$ & $-E_{23}$ & $-F_{23}$ \\ \hline
   $\mathbf{F_{12}}$ & * & * & * & $0$ & $F_{13}$ & $-E_{13}$ & $F_{23}$ & $-E_{23}$ \\ \hline\hline
   $\mathbf{E_{23}}$ & * & * & * & * & $0$ & $2H_{23}$ & $E_{12}$ & $F_{12}$ \\ \hline
   $\mathbf{F_{23}}$ & * & * & * & * & * & $0$ & $-F_{12}$ & $E_{12}$ \\ \hline\hline
   $\mathbf{E_{13}}$ & * & * & * & * & * & * & $0$ & $2H_{13}$ \\ \hline
   $\mathbf{F_{13}}$ & * & * & * & * & * & * & * & $0$ \\ \hline
\end{tabular}
\caption{The Lie bracket operation for $\mg=su(3)$}\label{tablesu3}
\end{center}
\end{table}

The *'s below the diagonal remind us that these entries are determined by those above the diagonal, since $[A,B]=-[B,A]$.

Our goal is to summarize the important patterns in Table~\ref{tablesu3} and their generalizations to $\mg=su(n)$.  First, define $\ml_{ij}:=\text{span}\{E_{ij},F_{ij}\}$, so we have an orthogonal direct sum:
$$su(3) = \tau\oplus\ml_{12}\oplus\ml_{23}\oplus\ml_{13}.$$
Here, ``orthogonal direct sum,'' denoted with the ``$\oplus$'' symbol, means that the spaces are mutually orthogonal and together span $su(3)$.  For $\mg=su(n)$, we have the analagous orthogonal direct sum:
$$su(n) = \tau\oplus\{\ml_{ij}\mid 1\leq i<j\leq n\}.$$
The spaces $\ml_{ij}$ are called the \underline{root spaces} of $SU(n)$.

The first two rows of Table~\ref{tablesu3} show that for each pair $(i,j)$, the space $\ml_{ij}$ is \underline{$\ad_{\tau}$-invariant}.  This means that for each $X\in\tau$ and each $V\in\ml_{ij}$, we have $\ad_X(V):=[X,V]\in\ml_{ij}$.  That is,
$\ad_\tau(\ml_{ij})\subset\ml_{ij}.$  More generally, for $\mg=su(n)$, each $\ml_{ij}$ is $\ad_\tau$-invariant.

Choose a fixed pair $(i,j)$.  How do matrices in $\ml_{ij}$ bracket with each other?  How do they bracket with elements of $\tau$?  These two questions are related, and their relationship is the key to understanding roots.  The answer to the first question is: $[E_{ij},F_{ij}]=2H_{ij}$.  It is useful to normalize our basis of $\ml_{ij}$, and report this answer as: \begin{equation}\label{alha}\hat\alpha_{ij}:=\left[\frac{E_{ij}}{|E_{ij}|},\frac{F_{ij}}{|F_{ij}|}\right]=H_{ij}.\end{equation}

The answer to the second question is that for all $X\in\tau$, we have:
$$[X,E_{ij}]=\alpha F_{ij}, \text{ and } [X,F_{ij}]=-\alpha E_{ij},$$
for some $\alpha\in\R$, which we write as $\alpha=\alpha_{ij}(X)$ to point out that it depends on $X$ and on the pair $(i,j)$.  From the first two rows of Table~\ref{tablesu3}, we see:
\begin{align*}
\alpha_{12}(H_{12}) &= 2, & \alpha_{23}(H_{12}) &= -1, & \alpha_{13}(H_{12})&= 1, \\
\alpha_{12}(H_{23}) &= -1, & \alpha_{23}(H_{23}) &= 2, & \alpha_{13}(H_{23})&= 1.
\end{align*}
By linearlity, the values $\alpha_{ij}(H_{12})$ and $\alpha_{ij}(H_{23})$ determine $\alpha_{ij}(X)$ for any $X\in\tau$.  For example, with $X=H_{13}=H_{12}+H_{23}$, we add the above two rows, getting:
\begin{align*}
\alpha_{12}(H_{13}) &= 1, & \alpha_{23}(H_{13}) &= 1, & \alpha_{13}(H_{13})&= 2.
\end{align*}
What is the pattern?  The most concise answer involves the matrices $\hat\alpha_{ij}$ defined in Equation~\ref{alha}; namely, for all $X\in\tau$ we have:
$$\alpha_{ij}(X) = \lb \hat\alpha_{ij},X\rb_\R.$$
Recall that $X,\hat\alpha_{ij}\in su(3)\subset M_3(\C)\cong\C^9\cong\R^{18}$, and $\lb \hat\alpha_{ij},X\rb_\R$ denotes the standard inner product on $\R^{18}$.  In particular,
$$\lb\text{diag}(\lambda_1\ii,\lambda_2\ii,\lambda_3\ii),\text{diag}(\mu_1\ii,\mu_2\ii,\mu_3\ii)\rb_\R = \lambda_1\mu_1+\lambda_2\mu_2+\lambda_3\mu_3.$$
For example, if $X=\diag(7\ii,5\ii,-12\ii)$, then $\alpha_{12}(X) = 7-5 = 2$, $\alpha_{23}(X) = 5-(-12)=17$ and $\alpha_{13}(X) = 7-(-12) = 19$.  So we know, for example, that $[X,E_{13}]=19\cdot F_{13}$ and $[X,F_{13}]=-19\cdot E_{13}$.

Everything above generalizes to $\mg=su(n)$.  In particular, for each pair $(i,j)$, the matrix $\hat\alpha_{ij}:=\left[\frac{E_{ij}}{|E_{ij}|},\frac{F_{ij}}{|F_{ij}|}\right]$ (which equals $H_{ij}\in\tau$) determines how any $X\in\tau$ brackets with any element of $\ml_{ij}$, via:
$$[X,E_{ij}] = \alpha_{ij}(X)\cdot F_{ij}, \text{ and } [X,F_{ij}] = -\alpha_{ij}(X)\cdot E_{ij},$$
with $\alpha_{ij}(X):=\lb\hat\alpha_{ij},X\rb_\R$.

The functions $\{\alpha_{ij}\}$ are called the \underline{roots} of $SU(n)$.  Each root is a linear function from $\tau$ to $\R$.  The corresponding matrices $\{\hat\alpha_{ij}\}$ are called the \underline{dual roots} of $SU(n)$; they encode the same information as the roots.  Their definitions:
$$\hat\alpha_{ij}:=\left[\frac{E_{ij}}{|E_{ij}|},\frac{F_{ij}}{|F_{ij}|}\right]\,\,\,\text{ and }\,\,\, \alpha_{ij}(X):=\lb\hat\alpha_{ij},X\rb_\R$$
make sense for any indices $i\neq j$ (because $E_{ij}$ and $F_{ij}$ make sense).  The ones for which $i<j$ are called \underline{positive} roots and dual roots; these were the most relevant in the above discussion.  Notice that $\hat\alpha_{ij}=-\hat\alpha_{ji}$ (because $E_{ij}=-E_{ji}$ and $F_{ij}=F_{ji}$), so $\alpha_{ij}=-\alpha_{ji}$.

Lastly, we wish to understand how matrices in one of the $\ml$'s bracket with matrices in another.  For $G=SU(3)$, Table~\ref{tablesu3} shows that $[\ml_{12},\ml_{23}]\subset\ml_{13}$, $[\ml_{12},\ml_{13}]\subset\ml_{23}$, and $[\ml_{23},\ml_{13}]\subset\ml_{12}$.  For $G=SU(n)$, $[\ml_{ij},\ml_{jk}]\subset\ml_{ik}$, with individual brackets given by:
\begin{table}[h!]\begin{center}\begin{tabular}{ | c || c | c |} \hline
   $\mathbf{[\cdot,\cdot]}$ & $\mathbf{E_{jk}}$ & $\mathbf{F_{jk}}$  \\ \hline\hline
   $\mathbf{E_{ij}}$ & $E_{ik}$ & $F_{ik}$  \\ \hline
   $\mathbf{F_{ij}}$ & $F_{ik}$ & $-E_{ik}$ \\ \hline
\end{tabular}
\caption{The bracket $[\ml_{ij},\ml_{jk}]\subset\ml_{ik}$ in $su(n)$}\label{ijjkbrak}
\end{center}
\end{table}

Notice that $\ml_{ij}=\ml_{ji}$, but the table is arranged with the second index of the first $\ml$ equalling the first index of the second.  To translate Table~\ref{ijjkbrak} into the block of Table~\ref{tablesu3} corresponding to  $[\ml_{12},\ml_{13}]\subset\ml_{23}$ or $[\ml_{23},\ml_{13}]\subset\ml_{12}$, just use that $E_{ij}=-E_{ji}$ and $F_{ij}=F_{ji}$ as needed.  For $su(n)$ with $n>3$, there are pairs of $\ml$'s which share no common index.  These pairs bracket to zero.  For example, $[\ml_{12},\ml_{34}]=0$.

To foreshadow the general theory, we mention that the dual roots encode the above information about which pairs of $\ml$'s bracket into which.  Consider:
\begin{equation}\label{dictionalry}[\ml_{ij},\ml_{jk}] \subset \ml_{ik}\,\,\, \longleftrightarrow\,\,\, \hat\alpha_{ij} + \hat\alpha_{jk} = \hat\alpha_{ik}.\end{equation}
Equation~\ref{dictionalry} can translate facts about brackets of $\ml$'s into facts about sums of dual roots.  If you wish to work only with positive dual roots, you may need to introduce negative signs, as in:
$$[\ml_{23},\ml_{13}] \subset \ml_{12}\,\,\, \longleftrightarrow\,\,\, \hat\alpha_{23} - \hat\alpha_{13} = -\hat\alpha_{12},$$
obtained from Equation~\ref{dictionalry} via: $[\ml_{23},\ml_{13}]=[\ml_{23},\ml_{31}] = \ml_{21}$.
Also, the fact that $[\ml_{12},\ml_{34}]=0$ translates into the fact that $\hat\alpha_{12}\pm\hat\alpha_{34}$ is not a dual root.

In summary, the bracket $[\ml_{ab},\ml_{cd}]$ can be found as follows.  If either $\hat\alpha_{ab}+\hat\alpha_{cd}$ or $\hat\alpha_{ab}-\hat\alpha_{cd}$ equals a dual root, $\hat\alpha_{ij}$, (they never both equal a dual root) then $[\ml_{ab},\ml_{cd}]\subset\ml_{ij}$.  Otherwise, $[\ml_{ab},\ml_{cd}]=0$.

\section{An invariant decomposition of $\mg$}
Let $G$ be compact Lie group with Lie algebra $\mg$.  Let $T\subset G$ be a maximal torus, with Lie algebra $\tau\subset\mg$.  In this and the next two sections, we generalize to $G$ all of the structures and patterns that we previously observed for $SU(n)$.  We will require:
\begin{prop}There exists an $\Ad$-invariant inner product, $\lb\cdot,\cdot\rb$, on $\mg$.\end{prop}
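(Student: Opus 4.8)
The plan is to imitate, for the compact group $G$, the way one builds an invariant inner product on a representation of a \emph{finite} group --- by averaging over the group. First I would fix \emph{any} inner product $\lb\cdot,\cdot\rb_0$ on $\mg$; for a matrix group the natural choice is the restriction to $\mg\subset M_n(\C)\cong\R^{2n^2}$ of the standard real inner product (the same one used above to define $|E_{ij}|$ and $\lb\cdot,\cdot\rb_\R$), but literally any inner product will do. In general $\lb\cdot,\cdot\rb_0$ is not $\Ad$-invariant; averaging is exactly the device that repairs this.

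Next I would invoke the existence of a bi-invariant probability measure $dg$ on the compact group $G$ (Haar measure): an assignment $f\mapsto\int_G f(g)\,dg$, defined for continuous $f\colon G\to\R$, that is linear, nonnegative whenever $f\geq0$, normalized by $\int_G 1\,dg=1$, and unchanged when $f(g)$ is replaced by $f(hg)$ or by $f(gh)$ for any fixed $h\in G$. I would then \emph{define}
\[ \lb X,Y\rb\ :=\ \int_G \lb \Ad_g X,\ \Ad_g Y\rb_0\ dg,\qquad X,Y\in\mg, \]
and verify that it is symmetric, bilinear, positive-definite, and $\Ad$-invariant. Symmetry and bilinearity are inherited from $\lb\cdot,\cdot\rb_0$ via linearity of the integral. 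Positive-definiteness holds because, for $X\neq0$, the function $g\mapsto|\Ad_g X|_0^2$ is continuous and everywhere strictly positive (each $\Ad_g$ is invertible), and a continuous nonnegative function that is positive somewhere has strictly positive average against $dg$. For $\Ad$-invariance, fix $h\in G$, substitute $g\mapsto gh$ (legitimate by right-invariance of $dg$), and use $\Ad_{gh}=\Ad_g\circ\Ad_h$:
\[ \lb\Ad_h X,\Ad_h Y\rb=\int_G\lb\Ad_{gh}X,\Ad_{gh}Y\rb_0\,dg=\int_G\lb\Ad_g X,\Ad_g Y\rb_0\,dg=\lb X,Y\rb. \]

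The one genuine ingredient --- and the step I expect to be the main obstacle --- is the existence of the invariant integral $dg$; everything else is bookkeeping. For compact matrix groups this is standard and should be available from the earlier chapters, but if one wants it directly: take a nonzero alternating form of top degree on $\mg$, spread it over $G$ by left multiplications to get a left-invariant volume form $\omega$, note that its total volume is finite since $G$ is compact and rescale it to have total volume $1$; then $R_h^*\omega=\det(\Ad_h)^{-1}\,\omega$, and since $h\mapsto\det(\Ad_h)$ is a continuous homomorphism from the compact group $G$ into $(\R^{\times},\cdot)$ its image is a compact, hence finite (contained in $\{\pm1\}$), subgroup, so $|\det(\Ad_h)|\equiv1$ and the measure attached to $\omega$ is bi-invariant. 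Finally, I would record the infinitesimal reformulation, since that is what actually gets used later: differentiating $t\mapsto\lb\Ad_{\exp(tX)}Y,\Ad_{\exp(tX)}Z\rb$ at $t=0$ shows that $\Ad$-invariance of $\lb\cdot,\cdot\rb$ is equivalent to $\lb[X,Y],Z\rb=-\lb Y,[X,Z]\rb$ for all $X,Y,Z\in\mg$ --- i.e., to $\ad_X$ being skew-symmetric with respect to $\lb\cdot,\cdot\rb$ for every $X\in\mg$.
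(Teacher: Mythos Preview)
Your averaging argument via Haar measure is correct and is the standard proof of this proposition for an arbitrary compact Lie group. The paper, however, does not actually give a proof: it states the proposition, recalls from earlier chapters (Proposition~8.12) that for the classical groups the explicit formula $\lb X,Y\rb_\R := \text{Real}(X\cdot Y^*)$ --- equivalently, the standard Euclidean inner product after identifying $M_n(\K)$ with a real vector space --- is $\Ad$-invariant, and then moves on. So the paper's ``proof'' is a reference to a direct verification in the concrete classical cases, not a general argument.

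Your approach is thus more general: it works for any compact Lie group, including the exceptional ones, at the cost of importing the existence of Haar measure. The paper's approach has the advantage that for the classical groups one gets an explicit, computable inner product (which the paper actually uses throughout when computing $|E_{ij}|$, $\lb\hat\alpha_{ij},X\rb_\R$, etc.), and no integration theory is needed. Your sketch of how to build the bi-invariant measure from a left-invariant top form, together with the observation that $|\det\Ad_h|\equiv 1$ on a compact group, is the right supplement if one wants the argument to be self-contained.
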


An \underline{inner product} means a function that associates a real number to each pair of vectors, satisfying the properties of the standard inner product on $\R^n$ enumerated in Proposition 3.3.  This is exactly the structure needed to define norms (as in Definition 3.1) and angles (as in Equation 3.4).  Recall the $\Ad$-invariant inner product for the classical groups, previously denoted $\lb\cdot,\cdot\rb_\R$.  It arose by identifying an $n\times n$ real, complex or quaternionic matrix with $\R^{n^2}$, $\R^{2n^2}$ or $\R^{4n^2}$, and using the standard inner product on this Euclidean space.  An equivalent definition, $\lb X,Y\rb_\R := \text{Real}(X\cdot Y^*)$, was given in the proof of Proposition 8.12.  Recall that ``$\Ad$-invariant'' means  $\lb\Ad_g X,\Ad_gY\rb = \lb X,Y\rb$ for all $g\in G$ and all $X,Y\in\mg$.  As in Proposition~8.14, this implies ``infinitesimal $\Ad$-invariance'': for all $A,B,C\in\mg$,
\begin{equation}\lb[A,B],C\rb = -\lb[A,C],B\rb.\end{equation}

We will require the following general facts about maximal tori, which we previously proved at least for the classical groups.  Recall from Exercise~7.6 that ``$G^0$'' denotes the identity component of $G$.
\begin{prop}[Summary of properties of a maximal torus]\label{summary}\hspace{.1in}
\begin{enumerate}
\item For every $x\in G^0$ there exists $g\in G^0$ such that $x\in g\cdot T\cdot g^{-1}$.
\item Every maximal torus of $G$ equals $g\cdot T\cdot g^{-1}$ for some $g\in G^0$.
\item If $x\in G^0$ commutes with every element of $T$, then $x\in T$.
\item If $X\in\mg$ commutes with every element of $\tau$, then $X\in\tau$.
\end{enumerate}
\end{prop}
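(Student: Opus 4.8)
The plan is to prove the four assertions in order, each using the previous ones; parts (1) and (2) together constitute the \emph{maximal torus theorem}, and establishing them is the one genuinely hard step, while (3) and (4) then follow by short bootstrapping arguments.

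For (1), recall that in the classical cases the spectral theorem does the work directly: unitary, orthogonal, or symplectic diagonalization places $x$ inside a conjugate of the standard maximal torus, and the same circle of ideas shows any two maximal tori are conjugate. For a general compact $G$ I would argue topologically: fixing $x\in G^0$, the self-map $gT\mapsto xgT$ of the homogeneous space $G^0/T$ is homotopic to the identity because $G^0$ is connected, and $G^0/T$ has nonzero Euler characteristic (equal to the order of the Weyl group of $G$), so the Lefschetz fixed-point theorem produces $g\in G^0$ with $xgT=gT$, i.e., $g^{-1}xg\in T$, which is exactly (1). I expect this topological input --- the nonvanishing of $\chi(G^0/T)$ combined with the Lefschetz theorem --- to be the main obstacle, since it is the only place the argument must leave behind the linear-algebra toolkit of the earlier chapters; an expository account could reasonably just cite it.

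Granted (1), part (2) is quick: an arbitrary maximal torus $T'$ has a topological generator $t'$, and (1) supplies $g\in G^0$ with $t'\in gTg^{-1}$, so $T'=\overline{\langle t'\rangle}\subseteq gTg^{-1}$; as a maximal torus contained in a torus must equal it, $T'=gTg^{-1}$. For (3), suppose $x\in G^0$ commutes with every element of $T$. By (1), $x$ lies in some maximal torus $T_x$ of $G^0$; since $T_x$ is connected and centralizes $x$, it lies in the compact connected subgroup $K:=Z_{G^0}(x)^0$, so $x\in K$, and $T\subseteq K$ for the same reason. Both $T$ and $T_x$ are maximal tori of $K$ (a torus of $K$ containing either one is a torus of $G^0$ containing it, hence equal to it), so (2) applied inside the connected group $K$ gives $k\in K$ with $T_x=kTk^{-1}$; then $k^{-1}xk\in T$, and since $k\in K\subseteq Z_{G^0}(x)$ we have $k^{-1}xk=x$, whence $x\in T$.

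Finally, (4) follows from (3). If $X\in\mg$ commutes with every element of $\tau$, then $\ad_X Y=[X,Y]=0$ for all $Y\in\tau$, so $\Ad_{\exp(tX)}=e^{t\,\ad_X}$ fixes every such $Y$, which means $\exp(tX)$ commutes with $\exp(sY)$ for all real $s,t$. Since $\exp(\tau)=T$, the element $\exp(tX)$ commutes with all of $T$, hence lies in $T$ by (3); differentiating $t\mapsto\exp(tX)$ at $t=0$ yields $X\in\tau$. So all of the real work sits in (1); the rest is formal, given the standard facts that tori possess topological generators, that the identity component of a closed subgroup is again a compact connected Lie group, and that $\Ad_{\exp X}=e^{\ad_X}$.
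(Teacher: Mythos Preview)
The paper does not prove this proposition; it is stated without proof as a \emph{summary} of facts established in earlier chapters of the textbook (the sentence introducing it reads ``which we previously proved at least for the classical groups''), and it is then used as standing input for the root-space theory that follows. There is therefore no proof in this paper to compare your attempt against.

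That said, your outline is correct and is the standard route. The spectral-theorem argument you mention for the classical groups is exactly what the earlier chapters of this book carry out, and the Lefschetz fixed-point argument you sketch (essentially Weil's) is the canonical proof of (1) for arbitrary compact $G$. Your bootstrapping of (2)--(4) from (1) is clean; in particular the centralizer trick for (3), passing to $K=Z_{G^0}(x)^0$ and applying (2) inside $K$, is the usual maneuver. One small quibble: asserting $\chi(G^0/T)=|W(G)|$ is logically a bit premature, since finiteness of the Weyl group is itself derived later; for the Lefschetz step one only needs $\chi(G^0/T)\neq 0$, which can be obtained directly by computing the Lefschetz number of left-translation by a generic element of $T$ without invoking the Weyl group.
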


We now begin to generalize to $G$ the patterns we have observed for $SU(n)$, beginning with:
\begin{theorem}\label{decompp} $\mg$ decomposes as an orthogonal direct sum,
$$\mg = \tau\oplus\ml_1\oplus\ml_2\oplus\cdots\oplus\ml_m,$$
where each $\ml_i$ is a 2-dimensional \underline{$\Ad_T$-invariant} subspace of $\mg$.
\end{theorem}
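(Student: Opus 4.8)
The plan is to decompose $\mg$ using the action of the torus $T$ on $\mg$ by the adjoint representation. First, I would consider $\tau^\perp$, the orthogonal complement of $\tau$ in $\mg$ with respect to the $\Ad$-invariant inner product from the first Proposition. Since the inner product is $\Ad$-invariant and $\Ad_t(\tau) = \tau$ for all $t \in T$ (because $T$ is abelian, so $\Ad_t$ fixes $\tau$ pointwise), it follows that $\Ad_t(\tau^\perp) = \tau^\perp$ for all $t \in T$. So $\tau^\perp$ is an $\Ad_T$-invariant subspace, and it suffices to decompose $\tau^\perp$ into $2$-dimensional $\Ad_T$-invariant pieces $\ml_1, \dots, \ml_m$.

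The key tool is the structure theory of orthogonal representations of a torus. For each $t \in T$, the map $\Ad_t \colon \tau^\perp \to \tau^\perp$ is an orthogonal transformation, and these commute as $t$ ranges over the abelian group $T$. A commuting family of orthogonal transformations of a real inner product space can be simultaneously block-diagonalized into $1$-dimensional blocks (on which every $\Ad_t$ acts as $\pm 1$) and $2$-dimensional blocks (on which each $\Ad_t$ acts as a rotation). I would first rule out the $1$-dimensional blocks: if $V = \R\cdot v \subset \tau^\perp$ were $\Ad_T$-invariant, then for each $t$ we'd have $\Ad_t(v) = \pm v$; since $T$ is connected and $t \mapsto \Ad_t(v)$ is continuous, in fact $\Ad_t(v) = v$ for all $t \in T$. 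Differentiating, $[X, v] = 0$ for all $X \in \tau$, i.e.\ $v$ commutes with all of $\tau$. But $\exp$ maps $\tau$ onto $T$, so $v$ would be fixed by $\Ad_t$ for all $t\in T$; more to the point, the element $\exp(sv)$ commutes with $T$, so by part (3) of Proposition~\ref{summary} lies in $T$ for all $s$, forcing $v \in \tau$ (using part (4), or directly that $v$ is tangent to $T$). This contradicts $v \in \tau^\perp$, $v \neq 0$. Hence every irreducible block is $2$-dimensional, and I label them $\ml_1, \dots, \ml_m$.

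It remains to produce the simultaneous block-diagonalization itself. I would do this by an eigenspace argument: complexify, or alternatively pick a single generic $t_0 \in T$ whose powers are dense in $T$ (such a topological generator exists because $T$ is a torus), and decompose $\tau^\perp$ into the $2$-dimensional rotation-blocks of the single orthogonal map $\Ad_{t_0}$ together with its possible $\pm 1$ eigenspaces. Any $\Ad_t$ with $t \in T$ commutes with $\Ad_{t_0}$, hence preserves each eigenvalue-grouping of $\Ad_{t_0}$; a short argument (again using density of the powers of $t_0$, or refining each block) shows the blocks can be chosen to be invariant under all of $T$ simultaneously. Combined with the previous paragraph, which shows no $1$-dimensional invariant subspace survives, every block in the final decomposition is $2$-dimensional and $\Ad_T$-invariant, completing the proof.

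The main obstacle I anticipate is the simultaneous block-diagonalization of the commuting orthogonal family $\{\Ad_t\}_{t\in T}$ at the level of rigor appropriate to this elementary, example-driven exposition: spelling out why the real rotation-blocks of $\Ad_{t_0}$ can be refined so as to be invariant under \emph{every} $\Ad_t$, rather than just under $\Ad_{t_0}$. The cleanest route is probably to pass to a topological generator $t_0$ of $T$, so that $\Ad_T$-invariance and $\Ad_{t_0}$-invariance coincide, reducing everything to the standard real normal form for a single orthogonal matrix — and then the only remaining point is the exclusion of $\pm 1$ eigenvalues, which is exactly the commutativity-with-$\tau$ argument via Proposition~\ref{summary}(3)--(4) sketched above.
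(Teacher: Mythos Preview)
Your argument is correct and shares the paper's overall skeleton --- pass to $\tau^\perp$, produce an orthogonal block decomposition, and kill any one-dimensional piece via Proposition~\ref{summary}(4) --- but the mechanism you use for the block decomposition is genuinely different. The paper observes that $\Ad:T\to O(s)$ is a smooth homomorphism whose image is therefore a torus inside $O(s)$; it then places $\Ad(T)$ inside a maximal torus of $O(s)$ and conjugates that maximal torus to the standard block-diagonal one, so the $2$-dimensional blocks come for free from the known description of the standard maximal torus of $O(2m)$ (and the odd case is ruled out by the extra fixed vector in $O(2m+1)$). Your route instead picks a topological generator $t_0\in T$, so that $\Ad_T$-invariance coincides with $\Ad_{t_0}$-invariance, and then applies the real normal form to the single orthogonal map $\Ad_{t_0}$. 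Your approach is a bit more self-contained (it needs only the spectral theorem for one orthogonal matrix plus the existence of a topological generator), while the paper's approach dovetails with the book's earlier development of maximal tori in $O(s)$ and avoids the mild subtlety you flagged about repeated rotation angles (which, as you note, is harmless once one uses that $\Ad_{t_0}$-invariant equals $\Ad_T$-invariant).
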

The spaces $\{\ml_i\}$ are called the \underline{root spaces} of $G$.  Each root space is \underline{$\Ad_T$-invariant}, which means that for each $g\in T$ and each $V\in\ml_i$, we have $\Ad_g(V)\in\ml_i$.  In other words, $\Ad_T(\ml_i)\subset\ml_i.$  By the definition of the Lie bracket, this implies that each root space $\ml_i$ is also \underline{$\ad_\tau$-invariant}, which means that $\ad_\tau(\ml_i)\subset\ml_i.$

Since $m=\half(\text{dim}(G)-\text{rank}(G)),$ the theorem implies that $\text{dim}(G)-\text{rank}(G)$ is even.

\begin{proof}
For each $g\in T$, the linear function $\Ad_g:\mg\ra\mg$ restricts to $\tau$ as the identity function, because $T$ is abelian.  Therefore, $\Ad_g$ sends any vector $V\in\tau^\perp$ to another vector in $\tau^\perp$, since for all $X\in\tau$,
$$\lb \Ad_g V,X\rb = \lb V,\Ad_{g^{-1}} X\rb = \lb V,X\rb = 0.$$
We choose a fixed orthonormal basis, $\mathcal{B}$, of $\tau^\perp$, via which we identify $\tau^\perp\cong\R^{s}$, where
$s := \text{dim}(\tau^\perp) = \text{dim}(G)-\text{rank}(G).$  For each $g\in T$, the map $\Ad_g:\tau^\perp\ra\tau^\perp$ can be represented with respect to $\mathcal{B}$ as left multiplication by some matrix in $O(s)$; in this way, we can consider $\Ad$ as a smooth homomorphism $\Ad:T\ra O(s)$.

Since $T$ is a compact abelian path-connected Lie group, so must be its image under a smooth homomorphism.  Theorem~9.5 generalizes to say that any compact abelian path-connected Lie group is isomorphic to a torus.  Thus, the image, $\Ad(T)\subset O(s)$, is a torus in $O(s)$.  Let $\tilde{T}$ be a maximal torus of $O(s)$ which contains $\Ad(T)$.  By Proposition~\ref{summary}.2, $\tilde{T}$ equals a conjugate of the standard maximal torus of $O(s)$. Said differently, after conjugating our basis $\mathcal{B}$, we can assume that $\tilde{T}$ equals the standard maximal torus of $O(s)$.

If $s$ is even, so that $s=2m$ for some integer $m$, then we'll write this newly conjugated orthonormal basis of $\tau^\perp$ as
$$\mathcal{B} = \{E_1,F_1,E_2,F_2,...,E_m,F_m\}.$$
Our description in Chapter 9 of the standard maximal torus of $O(2m)$ shows that each of the spaces $\ml_i:=\text{span}\{E_i,F_i\}$ is invariant under the left-multiplication map, $L_a$, for all $a\in\tilde{T}$.  This means that $L_a(\ml_i)\subset\ml_i$.  In particular, this holds for all $a\in\tilde{T}$ for which $L_a$ represents $\Ad_g$ for some $g\in T$.  Therefore, each $\ml_i$ is $Ad_T$-invariant.

It remains to demonstrate that $s$ cannot be odd.  If $s$ were odd, so that $s=2m+1$, then one more element, $V\in\tau^\perp$, would need to be added to the above basis $\mathcal{B}$.  It follows from our description in Chapter 9 of the standard maximal torus of $O(2m+1)$ that $L_a(V)=V$ for all $a\in\tilde{T}$, so $Ad_g(V) = V$ for all $g\in T$.  Therefore, $[X,V]=0$ for all $X\in\tau$, contradicting Proposition~\ref{summary}.4.
\end{proof}

\section{The definition of roots and dual roots}
Decompose $\mg=\tau\oplus\ml_1\oplus\cdots\oplus\ml_m$, as in Theorem~\ref{decompp}.  For each $i$, let $\{E_i,F_i\}$ be an  orthonormal ordered basis for $\ml_i$.

\begin{defn}
For each $i$, define $\hat\alpha_i:=[E_i,F_i]$, and define the linear function $\alpha_i:\tau\ra\R$ such that for all $X\in\tau$,
$\alpha_i(X)=\lb\hat\alpha_i,X\rb$.
\end{defn}

Notice that $\alpha_i$ and $\hat\alpha_i$ contain the same information.  The next proposition shows that the $\alpha$'s determine how vectors in $\tau$ bracket with vectors in the root spaces.
\begin{prop}\label{EiFi}  For each $i$, $\hat\alpha_i\in\tau$.  Further, for all $X\in\tau$,
$$[X,E_i]=\alpha_i(X)\cdot F_i, \text{ and } [X,F_i] = -\alpha_i(X)\cdot E_i.$$
\end{prop}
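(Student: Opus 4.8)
The plan is to exploit the $\ad_\tau$-invariance of each $\ml_i$ together with the skew-symmetry of $\ad_X$ (which follows from infinitesimal $\Ad$-invariance of the inner product). Fix $i$ and $X \in \tau$. Since $\ml_i$ is $\ad_\tau$-invariant, $[X,E_i]$ and $[X,F_i]$ both lie in $\ml_i = \spann\{E_i,F_i\}$, so we may write
$$[X,E_i] = a E_i + b F_i, \qquad [X,F_i] = c E_i + d F_i$$
for some real scalars $a,b,c,d$ depending on $X$ and $i$. The first task is to pin these down.

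First I would show $a = d = 0$ and $c = -b$, i.e.\ that the matrix of $\ad_X|_{\ml_i}$ in the orthonormal basis $\{E_i,F_i\}$ is skew-symmetric of the form $\left(\begin{smallmatrix} 0 & -b \\ b & 0\end{smallmatrix}\right)$. This comes straight from Equation~(10.2): $\lb [X,E_i],E_i\rb = -\lb[X,E_i],E_i\rb$ forces $a = 0$, and likewise $d = 0$; and $\lb[X,E_i],F_i\rb = -\lb[X,F_i],E_i\rb$ gives $b = -c$. So $[X,E_i] = b F_i$ and $[X,F_i] = -b E_i$, and it remains only to identify the scalar $b$ as $\alpha_i(X) = \lb\hat\alpha_i,X\rb$. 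For this I would apply infinitesimal $\Ad$-invariance once more, now in the form $\lb [X,E_i],F_i\rb = -\lb[F_i,E_i],X\rb$ (cycling the three slots), which reads $b = \lb[E_i,F_i],X\rb = \lb\hat\alpha_i,X\rb = \alpha_i(X)$, exactly as desired.

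Finally I would establish $\hat\alpha_i\in\tau$, which in fact I'd do first since it underpins the statement $\alpha_i(X) = \lb\hat\alpha_i,X\rb$ being a well-defined pairing on $\tau$. By Proposition~\ref{summary}.4 it suffices to show $\hat\alpha_i = [E_i,F_i]$ commutes with every $X\in\tau$, i.e.\ $[X,\hat\alpha_i] = 0$ for all $X\in\tau$. Using the Jacobi identity, $[X,[E_i,F_i]] = [[X,E_i],F_i] + [E_i,[X,F_i]]$; substituting the expressions just derived, this equals $[bF_i,F_i] + [E_i,-bE_i] = 0$. Hence $\hat\alpha_i$ commutes with all of $\tau$, so $\hat\alpha_i\in\tau$.

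The main obstacle is really just organizing the three applications of infinitesimal $\Ad$-invariance (Equation~(10.2)) cleanly and being careful about which argument plays which role in each; none of the steps is computationally heavy. One subtlety worth flagging: the argument shows $\ad_X|_{\ml_i}$ is skew in the chosen orthonormal basis, so the off-diagonal entry is a single scalar — this is what makes "$[X,E_i] = \alpha_i(X) F_i$" meaningful rather than just "$[X,E_i]\in\ml_i$," and it is genuinely using orthonormality of $\{E_i,F_i\}$, not merely that it is a basis.
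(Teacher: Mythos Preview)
Your proof is correct and follows essentially the same route as the paper's: use $\ad_\tau$-invariance to land $[X,E_i]$ in $\ml_i$, use infinitesimal $\Ad$-invariance (Equation~10.2) to kill the $E_i$-component and identify the $F_i$-coefficient as $\lb\hat\alpha_i,X\rb$, and finish with the Jacobi identity plus Proposition~\ref{summary}.4 to place $\hat\alpha_i$ in $\tau$. The only cosmetic difference is that you write out all four coefficients $a,b,c,d$ and argue skew-symmetry of the $2\times 2$ matrix, whereas the paper observes directly that $[X,E_i]\perp E_i$ and so must be a scalar multiple of $F_i$; your remark that $\hat\alpha_i\in\tau$ is needed for $\alpha_i(X)=\lb\hat\alpha_i,X\rb$ to make sense is unnecessary (the inner product is defined on all of $\mg$), but harmless.
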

\begin{proof} Let $X\in\tau$.  Since $\ml_i$ is $\ad_\tau$-invariant, we know $[X,E_i]\in\ml_i$.  Also, $[X,E_i]$ is orthogonal to $\ml_i$'s first basis vector, $E_i$, since
$$\lb[X,E_i],E_i\rb = -\lb[E_i,X],E_i\rb = \lb[E_i,E_i],X\rb = 0,$$
so $[X,E_i]$ must be a multiple of $\ml_i$'s second basis vector, $F_i$.  That is, $[X,E_i]=\lambda\cdot F_i$.  This multiple is:
$$\lambda=\lb[X,E_i],F_i\rb = -\lb[E_i,X],F_i\rb = \lb[E_i,F_i],X\rb = \lb\hat\alpha_i,X\rb = \alpha_i(X).$$
Similarly, $[X,F_i]=-\alpha_i(X)\cdot E_i$.

Finally, we prove that $\hat\alpha_i=[E_i,F_i]\in\tau$.  Using the Jacobi identity (Prop. 8.4) we have for all $X\in\tau$ that:
\begin{eqnarray*}
[X,[E_i,F_i]] & = & [E_i,[X,F_i]] - [F_i,[X,E_i]] \\
              & = & -[E_i,\alpha_i(X)\cdot E_i] - [F_i,\alpha(X)\cdot F_i] = 0.\end{eqnarray*}
Since $[E_i,F_i]$ commutes with every $X\in\tau$, we know $[E_i,F_i]\in\tau$.
\end{proof}

For each $i$, the linear function $\alpha_i:\tau\ra\R$ records the initial speed at which each vector $X\in\tau$ rotates the root space $\ml_i$.  To understand this remark, notice that the function $\ad_X:\ml_i\ra\ml_i$ (which sends $A\mapsto [X,A]$) is given with respect to the ordered basis $\{E_i,F_i\}$ as left multiplication by the ``infinitesimal rotation matrix'':
$$\ad_X = \left(\begin{matrix}0 & -\alpha_i(X) \\ \alpha_i(X) & 0\end{matrix}\right).$$
Thus, the function $Ad_{e^{tX}}:\ml_i\ra\ml_i$ is given in this ordered basis as left multiplication by the rotation matrix:
$$\Ad_{e^{tX}} = e^{\ad_X} = \left(\begin{matrix}\cos (\alpha_i(X) t) & -\sin (\alpha_i(X) t) \\ \sin (\alpha_i(X) t) & \cos (\alpha_i(X) t)\end{matrix}\right).$$

For each index $i$, let $R_i:\ml_i\ra\ml_i$ denote a $90^\circ$ rotation of $\ml_i$ which is ``counterclockwise'' with respect to the ordered basis $\{E_i,F_i\}$.  That is, $R_i$ is the linear function for which $R_i(E_i)=F_i$ and $R_i(F_i)=-E_i$.  Notice that for all $V\in\ml_i$ and all $X\in\tau$, we have:
\begin{gather*}\ad_X(V) = \alpha_i(X)\cdot R_i(V),\\
\Ad_{e^{tX}}V = \cos(\alpha_i(X)t)\cdot V + \sin(\alpha_i(X)t)\cdot R_i(V).
\end{gather*}
simply because these equations are true on a basis $V\in\{E_i,F_i\}$.  More generally, since $\mg=\tau\oplus\ml_1\oplus\cdots\oplus\ml_m$
is an orthogonal decomposition, each $V\in\mg$ decomposes uniquely as  $V=V^0+V^1+\cdots+V^m$, with $V^0\in\tau$ and with $V^i\in\ml_i$ for each $1\leq i\leq m$.  For each $X\in\tau$, $\ad_X$ and $\Ad_{e^{tX}}$ act independently on the $\ml$'s, so:
\begin{gather}
\ad_XV = \sum_{i=1}^m \alpha_i(X)\cdot R_i(V^i), \label{Rotatead}\\
\Ad_{e^{tX}} V = V^0 + \sum_{i=1}^m \cos(\alpha_i(X)t)\cdot V^i + \sin(\alpha_i(X)t)\cdot R_i(V^i).\label{RotateAdad}
\end{gather}

Thus, the one-parameter group $t\mapsto\Ad_{e^{tX}}$ independently rotates each $\ml_i$ with period $2\pi/|\alpha_i(X)|$.  The rotation is counterclockwise if $\alpha_i(X)>0$ and clockwise if $\alpha_i(X)<0$.

We caution that there is generally no basis-independent notion of clockwise.  If we replace the ordered basis $\{E_i,F_i\}$ with $\{E_i,-F_i\}$ (or with $\{F_i,E_i\}$), this causes $R_i$ and $\alpha_i$ to each be multiplied by $-1$, so our notion of clockwise is reversed.   Nevertheless, this sign ambiguity is the only sense in which our definition of $\alpha_i$ is basis-dependent.  The absolute value (or equivalently the square) of each $\alpha_i$ is basis-independent:
\begin{prop}\label{independentsign} Each function $\alpha_i^2:\tau\ra\R^{\geq 0}$ is independent of the choice of ordered orthonormal basis $\{E_i,F_i\}$ for $\ml_i$.
\end{prop}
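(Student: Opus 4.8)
\emph{Proof plan.} The plan is to exhibit a formula for $\alpha_i(X)^2$ that manifestly involves only data we are not allowed to vary — the subspace $\ml_i$, the element $X\in\tau$, and the ambient inner product $\lb\cdot,\cdot\rb$ — and not the ordered orthonormal basis $\{E_i,F_i\}$. The natural candidate comes from the rotation picture already in hand: for a fixed index $i$ and a fixed $X\in\tau$, the operator $\ad_X$ preserves $\ml_i$ and, with respect to $\{E_i,F_i\}$, acts there as $\alpha_i(X)$ times the $90^\circ$ rotation $R_i$. Since a $90^\circ$ rotation of the two-dimensional inner product space $\ml_i$ is orthogonal, we get $|\ad_X(V)|=|\alpha_i(X)|\cdot|R_i(V)|=|\alpha_i(X)|\cdot|V|$ for every nonzero $V\in\ml_i$, hence
$$\alpha_i(X)^2=\frac{|[X,V]|^2}{|V|^2}.$$
The right-hand side refers only to $X$, to an arbitrary nonzero vector $V$ of the fixed space $\ml_i$, and to $\lb\cdot,\cdot\rb$, so the left-hand side cannot depend on the chosen basis.

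Concretely, first I would record, from the discussion preceding Equation~\ref{Rotatead}, the identity $\ad_X(V)=\alpha_i(X)\cdot R_i(V)$ valid for all $V\in\ml_i$, together with the elementary fact $|R_i(V)|=|V|$. Second, I would fix any single nonzero $V\in\ml_i$ and take norms to obtain the displayed formula $\alpha_i(X)^2=|[X,V]|^2/|V|^2$. Third, I would observe that this expression is unchanged when $\{E_i,F_i\}$ is replaced by another ordered orthonormal basis, since nothing in it mentions a basis; this completes the proof. As a sanity check one can also argue by brute force: a new ordered orthonormal basis of $\ml_i$ differs from $\{E_i,F_i\}$ by an element of $O(2)$; for a rotation by angle $\theta$ one checks, using bilinearity and antisymmetry of the bracket (and $[E_i,E_i]=[F_i,F_i]=0$), that $[E_i',F_i']=(\cos^2\theta+\sin^2\theta)[E_i,F_i]=\hat\alpha_i$, so $\alpha_i$ is unchanged, while a reflection such as $\{E_i,F_i\}\mapsto\{F_i,E_i\}$ sends $\hat\alpha_i=[E_i,F_i]$ to $[F_i,E_i]=-\hat\alpha_i$, so $\alpha_i\mapsto-\alpha_i$ and $\alpha_i^2$ is again unchanged; every element of $O(2)$ being a rotation or a rotation times this reflection, $\alpha_i^2$ is $O(2)$-invariant.

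There is no genuine obstacle here. The only misstep to avoid is beginning with the brute-force route of tracking how $\hat\alpha_i=[E_i,F_i]$ transforms under a general element of $O(2)$, which drags in an unnecessary (if routine) computation. Phrasing the invariant as $|[X,V]|/|V|$ — equivalently, noting that $\ad_X^2$ restricted to $\ml_i$ equals $-\alpha_i(X)^2$ times the identity, so that $\alpha_i(X)^2$ is intrinsically the unique eigenvalue of the basis-free operator $-\ad_X^2|_{\ml_i}$ — makes the statement essentially immediate.
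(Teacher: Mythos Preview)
Your proposal is correct and essentially the same as the paper's proof: the paper observes that $\ad_X^2|_{\ml_i}=-\alpha_i(X)^2\cdot\text{Id}$, so that $-\alpha_i(X)^2$ is an eigenvalue of the basis-free operator $\ad_X^2$ on $\ml_i$. Your norm formulation $\alpha_i(X)^2=|[X,V]|^2/|V|^2$ is an equivalent way of packaging the same invariant, and you explicitly mention the eigenvalue version as well.
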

\begin{proof}
Let $X\in\tau$. Consider the linear function $\ad_X^2:\ml_i\ra\ml_i$, which sends $V\ra\ad_X(\ad_X(V))=[X,[X,V]]$.  By Proposition~\ref{EiFi}, we have $\ad_X^2(E_i) = -\alpha_i(X)^2\cdot E_i$ and $\ad_X^2(F_i) = -\alpha_i(X)^2\cdot F_i$.  Thus,
$$\ad_X^2 = -\alpha_i(X)^2\cdot\text{Id}.$$
Therefore $-\alpha_i(X)^2$ is an eigenvalue of $\ad_X^2$, and so is basis-independent.
\end{proof}

For our general definition, we will use:

\begin{defn}\label{D:dualroot}
A nonzero linear function $\alpha:\tau\ra\R$ is called a \underline{root} of $G$ if there exists a 2-dimensional subspace $\ml\subset\mg$ with an ordered orthonormal basis $\{E,F\}$ such that for each $X\in\tau$, we have
$$[X,E] = \alpha(X)\cdot F\,\,\text{ and }\,\,[X,F] = -\alpha(X)\cdot E.$$
In this case, $\ml$ is called the \underline{root space} for $\alpha$, and the \underline{dual root} for $\alpha$ means the unique vector $\hat\alpha\in\tau$ such that $\alpha(X)=\lb \hat\alpha,X\rb$ for all $X\in\tau$.
\end{defn}

The fact that a unique such dual root vector always exists is justified in Exercise~\ref{dualrootex}.

Notice that if $\alpha$ is a root of $G$ with root space $\ml=\text{span}\{E,F\}$, then $-\alpha$ is a root of $G$ with the same root space $\ml=\text{span}\{F,E\}$.

\begin{prop} The functions $\pm\alpha_1,...,\pm\alpha_m$ are all roots of $G$.
\end{prop}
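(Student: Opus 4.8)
The plan is to verify Definition~\ref{D:dualroot} directly for each $\pm\alpha_i$, drawing on Proposition~\ref{EiFi} for the bracket relations and on Proposition~\ref{summary}.4 to rule out the degenerate case $\alpha_i\equiv 0$. Almost all of the substantive work has already been done; what remains is packaging it into the form the definition demands.

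First, fix an index $i$. To show $\alpha_i$ is a root, I would exhibit its root space as $\ml:=\ml_i$ with the ordered orthonormal basis $\{E_i,F_i\}$. Proposition~\ref{EiFi} states precisely that $[X,E_i]=\alpha_i(X)\cdot F_i$ and $[X,F_i]=-\alpha_i(X)\cdot E_i$ for all $X\in\tau$, which is exactly the bracket condition required by Definition~\ref{D:dualroot}. So the only remaining point is that $\alpha_i$ is nonzero as a linear function $\tau\ra\R$.

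For that step: if $\alpha_i$ were identically zero, then by Proposition~\ref{EiFi} we would have $[X,E_i]=0$ for every $X\in\tau$, i.e., $E_i$ commutes with all of $\tau$. Proposition~\ref{summary}.4 would then force $E_i\in\tau$, contradicting the fact that $E_i$ is a nonzero vector in $\ml_i\subset\tau^\perp$. Hence $\alpha_i\neq 0$, and $\alpha_i$ is a root of $G$ with root space $\ml_i$ and dual root $\hat\alpha_i$.

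Finally, for $-\alpha_i$, I would invoke the remark just preceding the proposition: reversing the ordered orthonormal basis of $\ml_i$ from $\{E_i,F_i\}$ to $\{F_i,E_i\}$ rewrites the relations of Proposition~\ref{EiFi} as $[X,F_i]=(-\alpha_i)(X)\cdot E_i$ and $[X,E_i]=-(-\alpha_i)(X)\cdot F_i$, which is the bracket condition of Definition~\ref{D:dualroot} for the nonzero linear function $-\alpha_i$ (nonzero because $\alpha_i$ is). Thus $-\alpha_i$ is also a root, with the same root space $\ml_i$. I do not anticipate any real obstacle here: the mathematical content is entirely contained in Propositions~\ref{EiFi} and~\ref{summary}, both of which we may assume, and this proposition is simply the bookkeeping step of matching those facts against the definition of a root.
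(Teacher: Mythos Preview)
Your proposal is correct and follows essentially the same approach as the paper: the paper's proof simply notes that, given Proposition~\ref{EiFi} and the remark about $-\alpha$, the only remaining point is that each $\alpha_i$ is nonzero, and then derives this from Proposition~\ref{summary}.4 exactly as you do. Your write-up is more explicit in spelling out the matching against Definition~\ref{D:dualroot}, but the content is identical.
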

\begin{proof}
It only remains to show that each $\alpha_i$ is nonzero (not the zero function).  But if $\alpha_i(X)=0$ for all $X\in\tau$, then $E_i$ and $F_i$ would commute with every element of $\tau$, contradicting Proposition~\ref{summary}.4.
\end{proof}

Typically, $m>\text{dim}(\tau)$, so the set $\{\hat\alpha_1,...,\hat\alpha_m\}$ is too big to be a basis of $\tau$, but we at least have:
\begin{prop}\label{rootsspan}
If the center of $G$ is finite, then the dual roots of $G$ span $\tau$.
\end{prop}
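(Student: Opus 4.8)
The plan is to show the contrapositive: if the dual roots do not span $\tau$, then the center of $G$ is infinite. So suppose $\spann\{\hat\alpha_1,\dots,\hat\alpha_m\} \neq \tau$. Then there is a nonzero vector $Z\in\tau$ orthogonal to every $\hat\alpha_i$, which by the definition of the roots means $\alpha_i(Z) = \lb\hat\alpha_i,Z\rb = 0$ for all $i$. The first thing I would do is feed this $Z$ into the decomposition of the bracket coming from Theorem~\ref{decompp} and Proposition~\ref{EiFi}: for any $V\in\mg$, writing $V = V^0 + V^1 + \cdots + V^m$ as in Equation~\ref{Rotatead}, we get $\ad_Z V = \sum_{i=1}^m \alpha_i(Z)\cdot R_i(V^i) = 0$. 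Hence $\ad_Z = 0$ on all of $\mg$, i.e. $Z$ is a central element of the Lie algebra $\mg$.

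Next I would promote this infinitesimal statement to a statement about $G$. Since $\ad_Z = 0$, the formula $\Ad_{e^{tZ}} = e^{t\,\ad_Z} = \text{Id}$ (as in the discussion preceding Proposition~\ref{independentsign}, or more directly since $\ad_Z$ is nilpotent of order one here) shows that $e^{tZ}$ commutes with $e^{W}$ for every $W\in\mg$, and therefore $e^{tZ}$ commutes with every element of $G^0$. The one-parameter subgroup $\gamma(t) := e^{tZ}$ thus lies in the center of $G^0$. Since $Z\in\tau\neq 0$ and $Z\neq 0$, this is a nontrivial one-parameter subgroup; being contained in the compact group $T$, its closure $\overline{\{\gamma(t) : t\in\R\}}$ is a nontrivial compact connected abelian subgroup, hence a torus of dimension at least $1$ (using the fact, cited in the proof of Theorem~\ref{decompp}, that a compact connected abelian Lie group is a torus). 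In particular the center of $G^0$, and hence of $G$, contains a circle, so it is infinite.

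The one point requiring a little care — and the step I expect to be the main obstacle — is the passage from ``$Z$ is central in $\mg$'' to ``$e^{tZ}$ is central in $G$.'' The cleanest route is: $\Ad_{e^{tZ}} = \text{Id}$ on $\mg$ means $e^{tZ}g e^{-tZ}$ has the same effect under $\Ad$ as $g$ for all $g$; more simply, for any $W\in\mg$ and any $s$, $e^{tZ}e^{sW}e^{-tZ} = e^{s\,\Ad_{e^{tZ}}W} = e^{sW}$, so $e^{tZ}$ commutes with every $e^{sW}$, and since $G^0$ is generated by such exponentials $e^{tZ}$ centralizes $G^0$. One should also note that $\gamma(t)$ need not itself be a closed subgroup, which is why I take its closure; and that it suffices to produce an infinite subgroup of the center rather than to identify the center precisely. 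Finally, if one wants the center of $G$ (not just $G^0$) to be infinite, observe that $Z$ central in $\mg$ and $\gamma(t)\subset T\subset G$ already gives $\gamma(t)$ commuting with all of $G^0$; since any compact connected abelian subgroup of positive dimension is infinite, we are done.
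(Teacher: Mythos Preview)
Your proof is correct and follows the same approach as the paper's: both argue by contraposition, taking a nonzero $Z\in\tau$ orthogonal to all dual roots, observing that $[Z,A]=0$ for every $A\in\mg$, and concluding that the one-parameter group $t\mapsto e^{tZ}$ lies in the center. The paper's version is a one-line sketch of exactly this argument, while you have filled in the details (the root-space decomposition of $\ad_Z$, the exponentiation to $\Ad$, and the closure to a torus).
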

\begin{proof}
If some $X\in\tau$ were orthogonal to all of the dual roots, then $[X,A]=0$ for all $A\in\mg$, and therefore, $e^{tA}$ would lie in the center of $G$ for all $t\in\R$.
\end{proof}
Recall that $SO(n)$ (when $n>2$), $SU(n)$ and $Sp(n)$ have finite centers according to Proposition~9.10.  Even though the dual roots are typically linearly dependent, we at least have the following result, whose proof requires representation theory arguments:

\begin{lem}\label{notpar} No pair of the dual roots $\{\hat\alpha_1,...,\hat\alpha_m\}$ are equal (or even parallel) to each other.
\end{lem}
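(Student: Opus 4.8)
The plan is to show that if two dual roots $\hat\alpha_i$ and $\hat\alpha_j$ (with $i\neq j$) were parallel, then we could produce a three-dimensional (or higher) $\ad_\tau$-invariant subspace on which $\tau$ acts by rotations all with a \emph{common axis}, i.e. a single linear functional $\beta:\tau\to\R$ governs the rotation speed on each root space $\ml_i,\ml_j$. Concretely, suppose $\hat\alpha_j = c\,\hat\alpha_i$ for some $c\neq 0$; after rescaling (replacing $\ml_j$'s ordered basis by $\{F_j,E_j\}$ if $c<0$) we may assume $c>0$, and then $\alpha_j = c\,\alpha_i$ as functionals on $\tau$. The key object to study is the subalgebra $\mh\subset\mg$ generated by $\ml_i$, $\ml_j$, and the line $\R\hat\alpha_i$. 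For every $X\in\tau$, the operator $\ad_X$ restricted to $\ml_i\oplus\ml_j$ is a pair of infinitesimal rotations with speeds $\alpha_i(X)$ and $c\,\alpha_i(X)$, so after rescaling the speed of rotation is \emph{proportional} across the two planes — this is exactly the degeneracy that representation theory forbids for a semisimple Lie algebra.

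First I would set up the relevant subalgebra carefully. Using Proposition~\ref{EiFi} and the Jacobi identity (as in the proof of that proposition), one checks that $\mh := \R\hat\alpha_i \oplus \ml_i \oplus \ml_j \oplus [\ml_i,\ml_j]$ is closed under the bracket, hence is a Lie subalgebra of $\mg$; since $\mg$ carries an $\Ad$-invariant inner product, $\mh$ is a compact Lie algebra, so $\mh = \mathfrak{z}(\mh)\oplus[\mh,\mh]$ with $[\mh,\mh]$ semisimple. Next I would analyze the one-parameter group $t\mapsto \Ad_{e^{t\hat\alpha_i}}$ acting on $\mh$ via Equation~\ref{RotateAdad}: on $\ml_i$ it rotates with period $2\pi/|\alpha_i(\hat\alpha_i)|$ and on $\ml_j$ with period $2\pi/|c\,\alpha_i(\hat\alpha_i)|$, which are commensurable, so the closure of this one-parameter group is a torus $S\subset$ (the compact group integrating $\mh$) of rank $1$ whose action fixes $\R\hat\alpha_i$ pointwise and rotates $\ml_i$ and $\ml_j$ with proportional speeds. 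The contradiction I am aiming for: in a compact semisimple Lie algebra, the roots (in the sense of this chapter, restricted to any maximal torus of $[\mh,\mh]$) are never pairwise parallel — equivalently, a rank-one compact semisimple Lie algebra is $su(2)$, which has exactly one positive root, while here the construction forces $[\mh,\mh]$ to contain the two independent root spaces $\ml_i$ and $\ml_j$ with linearly dependent roots, so $\dim[\mh,\mh]\geq 5$, too large for $su(2)$ and impossible for a rank-one compact semisimple algebra.

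The cleanest way to run the final step is via the standard $su(2)$-triple argument: for each root $\alpha_i$, the vectors $\{\hat\alpha_i, E_i, F_i\}$ span (after normalizing $\hat\alpha_i$ appropriately) a subalgebra isomorphic to $su(2)$, and inside $\mg$ the $\ml_j$ with $\alpha_j\parallel\alpha_i$ must assemble, together with $\ml_i$, into a representation of this $su(2)$ on which the Cartan generator $\hat\alpha_i$ acts with eigenvalues $\pm i\alpha_i(\hat\alpha_i)$ and $\pm i\,c\,\alpha_i(\hat\alpha_i)$ (reading off from the rotation description). But the irreducible representations of $su(2)$ are determined by highest weight, and the weights appearing are $\{k, k-2, \dots, -k\}$ times a fixed unit; a short weight-counting shows that the root spaces of a root string through a given root $\alpha_i$ cannot contain a second root $\alpha_j$ that is a \emph{proper} multiple of $\alpha_i$ unless that multiple is $\pm 1$ — and $\pm 1$ is excluded because $\ml_i$ and $\ml_j$ are \emph{distinct} summands, whereas $\alpha_j=\pm\alpha_i$ would force $\ml_j=\ml_i$ by uniqueness of the eigenspace decomposition of $\ad_{\hat\alpha_i}^2$ (cf.\ Proposition~\ref{independentsign}). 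I expect the main obstacle to be making the representation-theoretic input genuinely rigorous at the elementary level of this book: one must either develop just enough $su(2)$-representation theory (highest weights, the ladder operators $E_i\pm iF_i$ on the complexification) to conclude, or else invoke it as a black box — the excerpt itself signals this by saying the proof "requires representation theory arguments," so I would state the $su(2)$ weight-string lemma, sketch its proof via raising/lowering operators, and apply it to the root string containing $\alpha_i$ to rule out any parallel $\alpha_j$.
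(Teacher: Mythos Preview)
The paper does not actually prove this lemma: immediately before stating it, the text says ``the following result, whose proof requires representation theory arguments,'' and then moves on without giving any argument. So there is no ``paper's own proof'' to compare against; your sketch is filling in exactly the gap the author flagged, and the $su(2)$-triple / weight-string approach you outline is indeed the standard representation-theoretic route one finds in the references (e.g.\ \cite{Hall}, \cite{Hel}).

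That said, there is a genuine gap in your handling of the case $c=\pm 1$. You write that ``$\alpha_j=\pm\alpha_i$ would force $\ml_j=\ml_i$ by uniqueness of the eigenspace decomposition of $\ad_{\hat\alpha_i}^2$ (cf.\ Proposition~\ref{independentsign}).'' This is circular: the uniqueness of the root-space decomposition (Proposition~\ref{uniqq}) is deduced \emph{from} Lemma~\ref{notpar} via the existence of strongly regular vectors (Proposition~\ref{regularopendense}, whose proof in Exercise~\ref{regvector} explicitly invokes Lemma~\ref{notpar}). Worse, the implication is simply false as stated: if $\alpha_i=\alpha_j$ then for every $X\in\tau$ the eigenspace of $\ad_X^2$ for eigenvalue $-\alpha_i(X)^2$ contains all of $\ml_i\oplus\ml_j$, which is $4$-dimensional, so no eigenspace argument can separate $\ml_i$ from $\ml_j$. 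Exercise~\ref{notuniqueroots} makes exactly this point: if the lemma failed with $\alpha_1=\alpha_2$, the $4$-dimensional block $\ml_1\oplus\ml_2$ would split into $2$-planes in many ways. To close this case you really do need the representation theory: one shows that the $su(2)$ spanned by $\{\hat\alpha_i,E_i,F_i\}$ acts on $\mg$, that the zero-weight space for $\hat\alpha_i$ inside the subrepresentation generated by $\ml_i\oplus\ml_j$ is just $\R\hat\alpha_i$, and then a dimension count using the structure of $su(2)$-irreducibles forces the $\pm|\hat\alpha_i|^2$-weight space to be $2$-dimensional, i.e.\ equal to $\ml_i$ alone. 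Your sketch gestures at ``weight-counting'' but does not supply this step; it is the heart of the matter and cannot be replaced by the eigenspace remark you gave.
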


We require this lemma to prove that $\{\pm\alpha_1,...,\pm\alpha_m\}$ are the only roots.  You will observe in Exercise~\ref{notuniqueroots} that if the lemma were false, then there would be other roots.

\begin{defn} A vector $X\in\tau$ is called a \underline{strongly regular vector} if the following are distinct non-zero numbers: $\alpha_1(X)^2,...,\alpha_m(X)^2$.
\end{defn}

For example, when $G=SU(n)$, $X=\diag(\lambda_1\ii,...,\lambda_n\ii)$ is strongly regular if and only if no difference of two $\lambda$'s equals zero or equals the difference of another two $\lambda$'s.
\begin{prop}\label{regularopendense} The strongly regular vectors of $G$  form an open dense subset of $\tau$.  In particular, strongly regular vectors exist.
\end{prop}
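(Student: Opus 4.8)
The plan is to show that the complement of the strongly regular set is a finite union of proper linear subspaces of $\tau$, each of which is closed and has empty interior; the complement of such a union is then open and dense. Concretely, a vector $X \in \tau$ fails to be strongly regular exactly when one of the following "bad" conditions holds: either $\alpha_i(X)^2 = 0$ for some $i$, or $\alpha_i(X)^2 = \alpha_j(X)^2$ for some $i \neq j$. I would first rewrite each bad condition as the vanishing of a linear functional on $\tau$.

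The first type of condition, $\alpha_i(X)^2 = 0$, is equivalent to $\alpha_i(X) = 0$, i.e. $X \in \ker \alpha_i$. Since each $\alpha_i$ is a \emph{nonzero} linear function $\tau \to \R$, its kernel is a hyperplane, hence a proper closed subspace with empty interior. The second type of condition, $\alpha_i(X)^2 = \alpha_j(X)^2$, factors as $(\alpha_i(X) - \alpha_j(X))(\alpha_i(X) + \alpha_j(X)) = 0$, so it holds precisely when $X \in \ker(\alpha_i - \alpha_j)$ or $X \in \ker(\alpha_i + \alpha_j)$. The key point here is that the linear functionals $\alpha_i - \alpha_j$ and $\alpha_i + \alpha_j$ are both nonzero when $i \neq j$: indeed, $\alpha_i + \alpha_j = 0$ would mean $\hat\alpha_i = -\hat\alpha_j$, and $\alpha_i - \alpha_j = 0$ would mean $\hat\alpha_i = \hat\alpha_j$, either of which makes $\hat\alpha_i$ and $\hat\alpha_j$ parallel, contradicting Lemma~\ref{notpar}. (In terms of dual roots, $\alpha_i - \alpha_j$ and $\alpha_i + \alpha_j$ are the functionals $\lb \hat\alpha_i - \hat\alpha_j, \cdot\rb$ and $\lb\hat\alpha_i + \hat\alpha_j,\cdot\rb$, and these vanish identically iff the corresponding dual root vector is zero.) Thus each of $\ker(\alpha_i \pm \alpha_j)$ is again a proper closed subspace with empty interior.

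Assembling the pieces: the set of non-strongly-regular vectors is the finite union
$$
\bigcup_{i=1}^m \ker\alpha_i \;\cup\; \bigcup_{i \neq j}\big(\ker(\alpha_i - \alpha_j) \cup \ker(\alpha_i + \alpha_j)\big),
$$
a finite union of proper linear subspaces of the finite-dimensional space $\tau$. This union is closed (finite union of closed sets), so the strongly regular set is open. It also has empty interior: a finite union of proper subspaces of $\R^d$ cannot contain an open ball, for instance because each proper subspace has Lebesgue measure zero, so the union does too, whereas any nonempty open set has positive measure; hence the complement (the strongly regular set) is dense. In particular it is nonempty.

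The main thing to be careful about is the non-vanishing of $\alpha_i \pm \alpha_j$ for $i \neq j$ — this is exactly where Lemma~\ref{notpar} (no two dual roots are parallel) is essential, and it is the one ingredient that is not elementary. Everything else is linear algebra. One could also phrase the density argument in a measure-free way (a vector space over an infinite field is not a finite union of proper subspaces), but the measure-zero argument is the cleanest at this level.
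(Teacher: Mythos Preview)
Your argument is correct and is exactly the intended one: the paper's proof is simply a pointer to an exercise with the hint ``use Lemma~\ref{notpar},'' and your write-up fills in precisely those details---the complement of the strongly regular set is the finite union of the hyperplanes $\ker\alpha_i$ and $\ker(\alpha_i\pm\alpha_j)$, the latter being proper exactly because Lemma~\ref{notpar} forbids $\hat\alpha_i=\pm\hat\alpha_j$.
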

\begin{proof}\label{regprop}
Exercise~\ref{regvector}, using Lemma~\ref{notpar}.
\end{proof}

\begin{prop}\label{regvec} If $X\in\tau$ is strongly regular, then the map $\ad_X^2:\mg\ra\mg$ has eigenvalues $0, -\alpha_1(X)^2,-\alpha_2(X)^2,...,-\alpha_m(X)^2$ with corresponding eigenspaces  $\tau,\ml_1,\ml_2,...,\ml_m$.
\end{prop}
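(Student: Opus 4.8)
The plan is to use the orthogonal decomposition $\mg = \tau\oplus\ml_1\oplus\cdots\oplus\ml_m$ from Theorem~\ref{decompp} together with the explicit formula for $\ad_X$ established in Proposition~\ref{EiFi}. First I would observe that since $\ad_X$ maps $\tau$ to $0$ and maps each $\ml_i$ into itself (by $\ad_\tau$-invariance), the operator $\ad_X^2$ also preserves this decomposition: it kills $\tau$ and restricts to each $\ml_i$. On $\tau$ we trivially get eigenvalue $0$ with the whole space $\tau$ contained in the kernel. On each $\ml_i$, Proposition~\ref{EiFi} gives $\ad_X(E_i)=\alpha_i(X)F_i$ and $\ad_X(F_i)=-\alpha_i(X)E_i$, so $\ad_X^2(E_i)=-\alpha_i(X)^2 E_i$ and $\ad_X^2(F_i)=-\alpha_i(X)^2 F_i$; hence $\ad_X^2$ restricted to $\ml_i$ is the scalar $-\alpha_i(X)^2\cdot\mathrm{Id}$, exactly as in the proof of Proposition~\ref{independentsign}.

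Next I would assemble these pieces. Since $\mg$ is the orthogonal direct sum of the $\ad_X^2$-invariant subspaces $\tau,\ml_1,\ldots,\ml_m$, and $\ad_X^2$ acts as a scalar on each, the full list of eigenvalues of $\ad_X^2$ is $0,-\alpha_1(X)^2,\ldots,-\alpha_m(X)^2$, with the corresponding eigenspaces containing $\tau,\ml_1,\ldots,\ml_m$ respectively. The only thing that could spoil the clean statement ``the eigenspaces \emph{are} exactly $\tau,\ml_1,\ldots,\ml_m$'' is coincidences among these scalars: if two of the numbers $0,-\alpha_1(X)^2,\ldots,-\alpha_m(X)^2$ were equal, the corresponding eigenspaces would merge into a single larger eigenspace (a sum of two or more of the pieces), and then no individual $\ml_i$ would be ``the'' eigenspace for its eigenvalue.

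This is precisely where the strong regularity hypothesis enters, and I expect this to be the only real step in the argument. By definition of strongly regular, the numbers $\alpha_1(X)^2,\ldots,\alpha_m(X)^2$ are distinct and nonzero, so the numbers $0,-\alpha_1(X)^2,\ldots,-\alpha_m(X)^2$ are $m+1$ pairwise distinct real numbers. Therefore each eigenvalue occurs with multiplicity exactly matching the dimension of its associated piece ($\mathrm{rank}(G)$ for the eigenvalue $0$, and $2$ for each $-\alpha_i(X)^2$), the eigenspaces are mutually orthogonal, and their dimensions sum to $\dim\mg$; consequently the eigenspace for $0$ is exactly $\tau$ and the eigenspace for $-\alpha_i(X)^2$ is exactly $\ml_i$. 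I would phrase the last point by noting that $\tau\oplus\ml_1\oplus\cdots\oplus\ml_m$ already exhausts $\mg$, so there is no room for any eigenspace to be larger than the piece we have exhibited inside it.

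The main obstacle, such as it is, is simply being careful to invoke strong regularity for \emph{both} the distinctness of the $\alpha_i(X)^2$ from each other \emph{and} their distinctness from $0$; without the latter, the eigenvalue $0$ could have eigenspace strictly larger than $\tau$ (it would pick up any $\ml_i$ with $\alpha_i(X)=0$). Everything else is a direct consequence of Proposition~\ref{EiFi} and the orthogonal decomposition, with no need for representation theory or for Lemma~\ref{notpar} beyond what is already packaged into Proposition~\ref{regularopendense}.
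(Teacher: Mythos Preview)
Your proof is correct and follows exactly the approach the paper indicates: the paper simply remarks that this proposition ``follows from the proof of Proposition~\ref{independentsign},'' and you have spelled out precisely that argument, using the action of $\ad_X^2$ on each piece of the decomposition together with the strong regularity hypothesis to separate the eigenspaces.
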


This proposition follows from the proof of Proposition~\ref{independentsign}.  It says that if $X$ is strongly regular, then the decomposition of $\mg$ into eigenspaces of $\ad_X^2$ is the same as the roots space decomposition of $\mg$ from Theorem~\ref{decompp}, with $\tau$ equal to the kernel of $\ad_X^2$.

If $X\in\tau$ is not strongly regular, then the eigenspace decomposition of $\ad_X^2$ is ``courser'' than the strongly regular one; that is, the root spaces $\ml_i$ for which $\alpha_i(X)=0$ are grouped with $\tau$ to form the kernel of $\ad_X^2$, and each other eigenspace is a root space or a sum of root spaces,
$\ml_{i_1}\oplus\cdots\oplus\ml_{i_k}$, coming from repeated values $\alpha_{i_1}(X)^2=\cdots=\alpha_{i_k}(X)^2$.  Since $\mg$'s decomposition into root spaces corresponds to the ``finest'' of the $\ad_X^2$ eigenspace decompositions, this decomposition is unique.  We have just established:
\begin{prop}\label{uniqq}
The decomposition from Theorem~\ref{decompp} is unique, and therefore $\{\pm\alpha_1,...,\pm\alpha_m\}$ are the only roots of $G$.
\end{prop}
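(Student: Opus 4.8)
The plan is to show that, once the maximal torus and hence $\tau$ is fixed, the root spaces are intrinsically pinned down as the nonzero eigenspaces of $\ad_X^2$ for any strongly regular $X$; both assertions then follow. For uniqueness, suppose $\mg=\tau\oplus\ml_1\oplus\cdots\oplus\ml_m$ and $\mg=\tau\oplus\ml_1'\oplus\cdots\oplus\ml_{m'}'$ are two decompositions satisfying Theorem~\ref{decompp}. By Proposition~\ref{regularopendense} the vectors strongly regular with respect to the first decomposition form an open dense subset of $\tau$, and likewise for the second; the intersection of two open dense subsets of $\tau$ is open dense, hence nonempty, so I can fix a single $X\in\tau$ that is strongly regular for both. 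Applying Proposition~\ref{regvec} to each decomposition identifies $\ml_1,\dots,\ml_m$ — and separately $\ml_1',\dots,\ml_{m'}'$ — with the eigenspaces of the one operator $\ad_X^2$ belonging to its nonzero (and, by strong regularity, pairwise distinct) eigenvalues. Since the eigenvalues and eigenspaces of a fixed linear operator are canonical, $m=m'$ and $\{\ml_1,\dots,\ml_m\}=\{\ml_1',\dots,\ml_{m'}'\}$; that is, the decomposition is unique up to reordering.

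For the classification of roots, let $\alpha$ be any root of $G$, with $2$-dimensional root space $\ml=\spann\{E,F\}$ as in Definition~\ref{D:dualroot}. The defining relations $[X,E]=\alpha(X)F$ and $[X,F]=-\alpha(X)E$ give $\ad_X^2|_{\ml}=-\alpha(X)^2\cdot\Id$ for every $X\in\tau$. Since $\alpha$ is not the zero functional, the complement of the hyperplane $\{X:\alpha(X)=0\}$ is open dense, so it meets the open dense set of strongly regular vectors; I fix a strongly regular $X$ with $\alpha(X)\neq0$. Then $\ml$ is contained in the eigenspace of $\ad_X^2$ for the nonzero eigenvalue $-\alpha(X)^2$, which by Proposition~\ref{regvec} is one of $\ml_1,\dots,\ml_m$, and comparing dimensions forces $\ml=\ml_i$ for that $i$. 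Now $\{E,F\}$ and $\{E_i,F_i\}$ are two ordered orthonormal bases of the same space $\ml_i$, so Proposition~\ref{independentsign} gives $\alpha^2=\alpha_i^2$ as functions on $\tau$; writing this as $\lb\hat\alpha-\hat\alpha_i,X\rb\cdot\lb\hat\alpha+\hat\alpha_i,X\rb=0$ for all $X\in\tau$, and using that a product of two linear functionals on $\tau$ can vanish identically only if one factor does, I conclude $\hat\alpha=\pm\hat\alpha_i$, i.e.\ $\alpha=\pm\alpha_i$. Hence $\pm\alpha_1,\dots,\pm\alpha_m$ are the only roots.

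Given Propositions~\ref{regvec}, \ref{regularopendense} and~\ref{independentsign}, there is no serious obstacle here; the one thing to watch is the elementary observation that finitely many open dense subsets of $\tau$ still intersect, which is exactly what lets me choose a single $X$ that is simultaneously strongly regular (for both decompositions above, and for the root space of a given root) and, where needed, off the hyperplane $\{\alpha=0\}$. All of the genuinely nontrivial input — in particular the representation-theoretic Lemma~\ref{notpar}, used via Proposition~\ref{regularopendense} — has already been absorbed into those earlier statements.
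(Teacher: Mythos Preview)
Your proof is correct and follows essentially the same strategy as the paper: identify the root-space decomposition with the eigenspace decomposition of $\ad_X^2$ for a strongly regular $X$, and use that eigenspace decompositions of a fixed operator are canonical. The paper phrases uniqueness as ``the root-space decomposition is the finest among the $\ad_X^2$-eigenspace decompositions,'' whereas you make this precise by choosing a single $X$ simultaneously strongly regular for both candidate decompositions; and where the paper simply says ``and therefore $\{\pm\alpha_1,\dots,\pm\alpha_m\}$ are the only roots,'' you supply the missing step by showing that an arbitrary root space $\ml$ lands inside a nonzero eigenspace of $\ad_X^2$, hence equals some $\ml_i$, and then invoke Proposition~\ref{independentsign} to get $\alpha=\pm\alpha_i$.
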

\section{The bracket of two root spaces}
The roots describe exactly how vectors in $\tau$ bracket with vectors in the root spaces.  Surprisingly, they also help determine how vectors in one root space bracket with vectors in another root space.  If $\hat\alpha_i+\hat\alpha_j$ equals a dual root, then let $\ml_{ij}^+$ denote its root space; otherwise, let $\ml_{ij}^+:=\{0\}$.   If $\hat\alpha_i-\hat\alpha_j$ equals a dual root, then let $\ml_{ij}^-$ denote its root space; otherwise, let $\ml_{ij}^-:=\{0\}$.  With this notation:

\begin{theorem}\label{rootsums} $[\ml_i,\ml_j]\subset\ml_{ij}^+\oplus\ml_{ij}^-$.
\end{theorem}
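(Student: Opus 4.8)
The plan is to exploit the rotation formulas~\eqref{Rotatead} and~\eqref{RotateAdad} together with the $\Ad$-invariance of the bracket. Fix $i$ and $j$, pick $A\in\ml_i$ and $B\in\ml_j$, and set $W:=[A,B]$. Decompose $W=W^0+\sum_{k=1}^m W^k$ along $\mg=\tau\oplus\ml_1\oplus\cdots\oplus\ml_m$. I want to show that $W$ lies in $\ml_{ij}^+\oplus\ml_{ij}^-$, i.e. that $W^0=0$ and $W^k=0$ for every root space $\ml_k$ whose dual root $\hat\alpha_k$ is not one of $\pm(\hat\alpha_i\pm\hat\alpha_j)$. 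The tool is to apply $\Ad_{e^{tX}}$ for $X\in\tau$ to the identity $W=[A,B]$ and use $\Ad_{e^{tX}}[A,B]=[\Ad_{e^{tX}}A,\Ad_{e^{tX}}B]$.

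First I would reduce to the case $A\in\{E_i,F_i\}$ and $B\in\{E_j,F_j\}$ by bilinearity of the bracket; it suffices to treat $W=[E_i,E_j]$, and the other three cases are identical. Now compute both sides of the $\Ad_{e^{tX}}$-identity. On the right, by~\eqref{RotateAdad} (applied in $\ml_i$ and $\ml_j$),
\begin{align*}
\Ad_{e^{tX}}[E_i,E_j]
&= \bigl[\cos(\alpha_i(X)t)E_i+\sin(\alpha_i(X)t)F_i,\ \cos(\alpha_j(X)t)E_j+\sin(\alpha_j(X)t)F_j\bigr].
\end{align*}
Expanding by bilinearity and applying product-to-sum trigonometric identities, the right side becomes a linear combination of the four brackets $[E_i,E_j],[E_i,F_j],[F_i,E_j],[F_i,F_j]$ with coefficients that are trigonometric functions of $(\alpha_i(X)+\alpha_j(X))t$ and $(\alpha_i(X)-\alpha_j(X))t$ only. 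On the left, again by~\eqref{RotateAdad} applied to $W=\sum W^k$: $\Ad_{e^{tX}}W = W^0 + \sum_k \bigl(\cos(\alpha_k(X)t)W^k+\sin(\alpha_k(X)t)R_k(W^k)\bigr)$. Projecting the equality onto a fixed summand $\ml_k$ and onto $\tau$, I get, for every $X\in\tau$ and every $t$, that $\cos(\alpha_k(X)t)W^k+\sin(\alpha_k(X)t)R_k(W^k)$ equals a vector whose only time-frequencies are $0,\ \alpha_i(X)\pm\alpha_j(X)$; and $W^0$ is constant in $t$ and must match the $0$-frequency part. Choosing $X$ strongly regular (Proposition~\ref{regularopendense}) makes the numbers $\alpha_k(X)$ all distinct in absolute value and distinct from $0$; comparing frequencies as functions of $t$ then forces $W^k=0$ unless $\alpha_k(X)=\pm(\alpha_i(X)\pm\alpha_j(X))$, and forces $W^0$ to absorb the constant term, which is zero unless $\alpha_i(X)\pm\alpha_j(X)=0$ — but strong regularity rules $\alpha_i(X)\pm\alpha_j(X)=0$ out (as $i\neq j$, and $\pm\alpha_i$ is never $\mp\alpha_j$ by Lemma~\ref{notpar}), so in fact $W^0=0$ outright. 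To upgrade ``$\alpha_k(X)=\pm(\alpha_i(X)\pm\alpha_j(X))$ for one strongly regular $X$'' to ``$\hat\alpha_k=\pm(\hat\alpha_i\pm\hat\alpha_j)$ as linear functionals'', note the surviving nonzero summands $W^k$ are finitely many, and a single strongly regular $X$ already separates all of $\pm\alpha_1,\dots,\pm\alpha_m$ and all four combinations $\pm\alpha_i\pm\alpha_j$ from each other; so the index $k$ with $W^k\neq 0$ is uniquely pinned down and its root must be exactly $\hat\alpha_i+\hat\alpha_j$ or $\hat\alpha_i-\hat\alpha_j$, giving $W\in\ml_{ij}^+\oplus\ml_{ij}^-$.

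The main obstacle is the frequency-matching bookkeeping: I must be careful that the trigonometric coefficients on the right genuinely involve only the sum and difference frequencies $\alpha_i(X)\pm\alpha_j(X)$ (this is exactly the content of the product-to-sum formulas $\cos a\cos b=\tfrac12[\cos(a-b)+\cos(a+b)]$, etc.), and that on the left the functions $\{1,\cos(\alpha_k(X)t),\sin(\alpha_k(X)t)\}_k$ are linearly independent as functions of $t$ when the $\alpha_k(X)$ are distinct nonnegative reals — which is where strong regularity enters and why Proposition~\ref{regularopendense} is needed. A secondary subtlety is handling the case $\alpha_i(X)=\alpha_j(X)$ up to sign: Lemma~\ref{notpar} guarantees $\hat\alpha_i$ and $\hat\alpha_j$ are not parallel, so for generic (indeed strongly regular, after possibly shrinking to a dense open set) $X$ the four numbers $\alpha_i(X)\pm\alpha_j(X)$ are nonzero and distinct in absolute value from each other and from every $\alpha_k(X)$ not meant to survive, which is what makes the argument go through cleanly. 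Everything else is routine.
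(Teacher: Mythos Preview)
Your approach is essentially the paper's: both apply $\Ad_{e^{tX}}$ to the bracket, expand the right side via~\eqref{RotateAdad} and product-to-sum identities into frequencies $\alpha_i(X)\pm\alpha_j(X)$, expand the left side as a sum over root spaces with frequencies $\alpha_k(X)$, and then match. The one substantive difference is the endgame. The paper uses two carefully chosen vectors $X$: first $X\perp\spann\{\hat\alpha_i,\hat\alpha_j\}$, which forces any surviving $\hat\alpha_k$ into $\spann\{\hat\alpha_i,\hat\alpha_j\}$; then $X$ with $\alpha_i(X)=1$ and $\alpha_j(X)=0$ (possible by Lemma~\ref{notpar}), which pins down the coefficients of $\hat\alpha_k$ in that span. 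You instead use a single sufficiently generic $X$ and argue by linear independence of the trigonometric functions. That works, but be aware that strong regularity as defined in the paper only separates the $\alpha_k(X)^2$ from one another and from zero; it does \emph{not} by itself prevent an accidental coincidence $\alpha_k(X)^2=(\alpha_i(X)\pm\alpha_j(X))^2$ for some $k$ with $\hat\alpha_k\neq\pm(\hat\alpha_i\pm\hat\alpha_j)$. You patch this correctly in your final paragraph by shrinking to a smaller dense open set avoiding finitely many extra hyperplanes, so the argument goes through. (The paper also disposes of the $\tau$-component directly via the invariant inner product rather than by frequency matching, but that is cosmetic.)
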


In particular, if neither $\hat\alpha_i+\hat\alpha_j$ nor $\hat\alpha_i-\hat\alpha_j$ equals a dual root, then $[\ml_i,\ml_j]=\{0\}$.  For all of the classical groups except $SO(2n+1)$, we'll see that the sum and difference never both equal dual roots, so any pair of root spaces must bracket to zero or to a single root space:
\begin{cor} If $G\in\{SU(n),SO(2n),Sp(n)\}$, then for any pair $(i,j)$, either $[\ml_i,\ml_j]=0$ or there exists $k$ such that $[\ml_i,\ml_j]\subset\ml_k$.  In the latter case, $\hat\alpha_i\pm\hat\alpha_j = \pm\hat\alpha_k$.
\end{cor}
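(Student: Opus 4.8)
The plan is to deduce the corollary from Theorem~\ref{rootsums} by ruling out, for the listed groups, the possibility that $\hat\alpha_i+\hat\alpha_j$ and $\hat\alpha_i-\hat\alpha_j$ are \emph{both} dual roots. Theorem~\ref{rootsums} already gives $[\ml_i,\ml_j]\subset\ml_{ij}^+\oplus\ml_{ij}^-$, so if at most one of the two summands is nonzero we are done: either both are $\{0\}$, giving $[\ml_i,\ml_j]=0$, or exactly one equals a root space $\ml_k$, and then by construction $\hat\alpha_i+\hat\alpha_j=\hat\alpha_k$ or $\hat\alpha_i-\hat\alpha_j=\hat\alpha_k$ (using that $\hat\alpha_k$ and $-\hat\alpha_k$ have the same root space, so ``equals a dual root'' allows the sign $\pm\hat\alpha_k$). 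So the whole content is the claim: for $G\in\{SU(n),SO(2n),Sp(n)\}$, no sum $\hat\alpha_i+\hat\alpha_j$ and difference $\hat\alpha_i-\hat\alpha_j$ can simultaneously be dual roots.

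First I would reduce this to a statement purely about the finite set of dual roots in the inner-product space $\tau$. Suppose for contradiction that $\hat\alpha_i+\hat\alpha_j=\pm\hat\alpha_k$ and $\hat\alpha_i-\hat\alpha_j=\pm\hat\alpha_l$ for dual roots $\hat\alpha_k,\hat\alpha_l$. Adding and subtracting these relations shows $2\hat\alpha_i$ and $2\hat\alpha_j$ are each expressible as $\pm\hat\alpha_k\pm\hat\alpha_l$; more usefully, I would look at this as a purely combinatorial incidence among four dual roots. For $SU(n)$ the dual roots are exactly the $\hat\alpha_{ij}=H_{ij}=\diag(\dots,\ii,\dots,-\ii,\dots)$ (normalized), and these sit inside $\tau\subset\R^n$ essentially as the vectors $e_i-e_j$; for such vectors one checks directly, by casework on how many indices a pair shares, that if $(e_a-e_b)+(e_c-e_d)$ is again of the form $\pm(e_p-e_q)$ then $(e_a-e_b)-(e_c-e_d)$ is not, and vice versa. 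For $Sp(n)$ and $SO(2n)$ I would use the analogous explicit descriptions of the dual roots that the paper develops in the sections on those groups (the roots are, up to normalization, $\pm e_i\pm e_j$ and, for $Sp(n)$, also $\pm 2e_i$), and run the same finite casework; the $SO(2n+1)$ case is explicitly excluded precisely because there the short roots $\pm e_i$ allow $e_i+e_j$ and $e_i-e_j$ to both be roots.

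The main obstacle — and the reason the corollary is stated only after the group-by-group analysis — is that the clean argument really does require knowing the root systems of these classical groups concretely; there is no soft, representation-theoretic shortcut that avoids the casework, since $SO(2n+1)$ genuinely is a counterexample to the naive statement. So in practice I would organize the proof as three short verifications, one per family, each amounting to: list the dual roots, observe they all have the same length within each ``root length class'', and note that $\hat\alpha_i+\hat\alpha_j$ and $\hat\alpha_i-\hat\alpha_j$ are orthogonal complements in the plane $\spann\{\hat\alpha_i,\hat\alpha_j\}$ up to scaling, so they have different lengths unless $|\hat\alpha_i|=|\hat\alpha_j|$ and the angle between them is $60^\circ$ or $90^\circ$ or $120^\circ$ — and then one checks that in types $A_{n-1}$, $C_n$, $D_n$ the only coincidences that arise put exactly one of sum/difference into the root system. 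Finally, in the surviving case I would read off $\hat\alpha_i\pm\hat\alpha_j=\pm\hat\alpha_k$ directly from whichever of $\ml_{ij}^+,\ml_{ij}^-$ is nonzero, completing the proof.
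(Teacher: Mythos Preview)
Your approach is correct and matches the paper's: the corollary is deduced from Theorem~\ref{rootsums} together with the observation (verified case-by-case from the explicit dual-root lists in the subsequent sections on $SU(n)$, $SO(2n)$, and $Sp(n)$) that for these groups the sum and difference of two dual roots are never both dual roots. The paper does not give a separate proof but simply says ``we'll see'' and defers to those explicit computations, which is exactly the casework you propose; your side remarks about lengths and angles are not needed and are a bit imprecise, but the core argument is the intended one.
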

Section~10 will contain the standard proof of Theorem~\ref{rootsums} using complexified Lie algebras.  For now, we offer the following longer but less abstract proof:
\begin{proof}[Proof of Theorem~\ref{rootsums}]
Let $V\in\ml_i$ and let $W\in\ml_j$, and define $U:=[V,W]$.  We wish to prove that $U\in\ml_{ij}^+\oplus\ml_{ij}^-$.  First notice that $U\in\tau^\perp$ because for all $Y\in\tau$,
$$\lb U,Y\rb  = \lb[V,W],Y\rb = -\lb[V,Y],W\rb = \lb [Y,V],W\rb = 0.$$
For any $X\in\tau$, we can define the path $U_t:=[V_t,W_t]$, where
\begin{gather*}
V_t:=\Ad_{e^{tX}}V=\cos(\alpha_i(X)t)\cdot V + \sin(\alpha_i(X)t)\cdot R_i(V), \\
W_t:=\Ad_{e^{tX}}W=\cos(\alpha_j(X)t)\cdot W + \sin(\alpha_j(X)t)\cdot R_j(W)
\end{gather*}
are circles in $\ml_i$ and $\ml_j$. Standard trigonometric identities yield:
\begin{eqnarray}\label{tinker2}
2U_t & = & \,\,\,\,\,\cos((\alpha_i(X)+\alpha_j(X))t)([V,W]-[R_iV,R_jW])\notag\\
&   & + \sin((\alpha_i(X)+\alpha_j(X))t)([R_iV,W]+[V,R_jW])\\
&   & + \cos((\alpha_i(X)-\alpha_j(X))t)([V,W]+[R_iV,R_jW])\notag\\
&   & + \sin((\alpha_i(X)-\alpha_j(X))t)([R_iV,W]-[V,R_jW]).\notag
\end{eqnarray}
On the other hand, since $$U_t = [\Ad_{e^{tX}}V,\Ad_{e^{tX}}W]=\Ad_{e^{tX}}[V,W] = \Ad_{e^{tX}} U,$$ we can decompose
$U= U^1+\cdots+U^m$ (with $U^i\in\ml_i$), and Equation~\ref{RotateAdad} gives:
\begin{equation}\label{tinker1}U_t = \sum_{k=1}^m \left(\cos(\alpha_k(X)t)\cdot U^k +\sin(\alpha_k(X)t)\cdot R_k(U^k)\right).\end{equation}
The $k^\text{th}$ term of this sum, denoted $U_t^k$, is a circle in $\ml_k$.

For any $X\in\tau$, the expressions for $U_t$ obtained from Equations~\ref{tinker2} and~\ref{tinker1} must equal each other.  We claim this implies that the first two lines of Equation~\ref{tinker2} must form a circle in $\ml_{ij}^+$ and the last two lines must form a circle in $\ml_{ij}^-$, so in particular $U_t\in\ml_{ij}^+\oplus\ml_{ij}^-$ as desired.  This implication is perhaps most easily seen by considering special types of vectors $X\in\tau$, as follows.

First, choose $X\perp\text{span}\{\hat\alpha_i,\hat\alpha_j\}$, so $\alpha_i(X)=\alpha_j(X)=0$, so $t\mapsto U_t$ is constant.  If some $U^k\neq 0$, then $\alpha_k(X)=0$, which means that $\hat\alpha_k\perp X$.  In summary, if $U^k\neq 0$, then $\hat\alpha_k$ must be perpendicular to any $X$ which is perpendicular to $\text{span}\{\hat\alpha_i,\hat\alpha_j\}$.  We conclude that if $U^k\neq 0$, then $\hat\alpha_k\in\text{span}\{\hat\alpha_i,\hat\alpha_j\}$.

Next, choose $X\in\text{span}\{\hat\alpha_i,\hat\alpha_j\}$ with $\alpha_i(X)=\lb\hat\alpha_i,X\rb=1$ and $\alpha_j(X)=\lb\hat\alpha_j,X\rb=0$, which is possible because $\hat\alpha_i$ and $\hat\alpha_j$ are not parallel, by Lemma~\ref{notpar}.  If some $U^k\neq 0$, then Equation~\ref{tinker2} shows that $t\mapsto U_t^k$ has period $=2\pi$, so $\alpha_k(X) = \lb\hat\alpha_k,X\rb=\pm1$, by Equation~\ref{tinker1}.  In summary, if some $U^k\neq 0$, then $\hat\alpha_k\in\text{span}\{\hat\alpha_i,\hat\alpha_j\}$ has the same projection onto the orthogonal compliment of $\hat\alpha_j$ as does $\pm\hat\alpha_i$.  Reversing the roles of $i$ and $j$ shows that $\hat\alpha_k$ also has the same projection onto the orthogonal compliment of $\hat\alpha_i$ as does $\pm\hat\alpha_j$.  It follows easily that $\hat\alpha_k = \pm\hat\alpha_i\pm\hat\alpha_j$, so $U\in\ml_{ij}^+\oplus\ml_{ij}^-$.
\end{proof}

For a single $V\in\ml_i$ and $W\in\ml_j$, suppose we know the bracket $[V,W]=A^++A^-$ (with $A^+\in\ml_{ij}^+$ and $A^-\in\ml_{ij}^-$).  This single bracket determines the entire bracket operation between $\ml_i$ and $\ml_j$.  To see how, let $R_+$ denote the $90^\circ$ rotation of $\ml_{ij}^+$ which is counterclockwise if $\hat\alpha_i+\hat\alpha_j=\hat\alpha_k$ or clockwise if $\hat\alpha_i+\hat\alpha_j=-\hat\alpha_k$ for some $k$.  Similarly let $R_-$ denote the $90^\circ$ rotation of $\ml_{ij}^-$ which is counterclockwise if $\hat\alpha_i-\hat\alpha_j=\hat\alpha_k$ or clockwise if $\hat\alpha_i-\hat\alpha_j=-\hat\alpha_k$.   With this notation, the ideas of the previous proof yield the following generalization of Table~\ref{ijjkbrak}:

\begin{table}[h!]\begin{center}\begin{tabular}{ | c || c | c |} \hline
   $\mathbf{[\cdot,\cdot]}$ & $\mathbf{W}$ & $\mathbf{R_jW}$  \\ \hline\hline
   $\mathbf{V}$ & $A^++A^-$ & $R_+(A^+)-R_-(A^-)$  \\ \hline
   $\mathbf{R_iV}$ & $R_+(A^+)-R_-(A^-)$ & $-A^++A^-$ \\ \hline
\end{tabular}
\caption{The bracket $[\ml_{i},\ml_{j}]\subset\ml_{ij}^+\oplus\ml_{ij}^-$}\label{genijk}
\end{center}
\end{table}


\section{The structure of $\mg=so(2n)$}
Let $n>1$ and $G=SO(2n)$, so $\mg=so(2n)$.  Recall that the Lie algebra of the standard maximal torus of $G$ is:
$$\tau=\left\{\diag\left(\left(\begin{matrix} 0 & \theta_1 \\ -\theta_1 & 0\end{matrix}\right),...,\left(\begin{matrix} 0 & \theta_n \\ -\theta_n & 0\end{matrix}\right)\right)\mid \theta_i\in\R \right\}.$$
Let $H_i\in\tau$ denote the matrix with $\theta_i=1$ and all other $\theta$'s zero, so that $\{H_1,...,H_n\}$ is a basis for $\tau$. Also define:
$$E:=\left(\begin{matrix} 1 & 0 \\ 0 & 1\end{matrix}\right),F:=\left(\begin{matrix} 0 & 1 \\ -1 & 0\end{matrix}\right), X:=\left(\begin{matrix} 0 & 1 \\ 1 & 0\end{matrix}\right), Y:=\left(\begin{matrix} 1 & 0 \\ 0 & -1\end{matrix}\right).$$
Think of a matrix in $so(2n)$ as being an $n\times n$ grid of $2\times2$ blocks.  For each pair $(i,j)$ of distinct indices between $1$ and $n$, define $E_{ij}$ so that its $(i,j)^\text{th}$ block equals $E$ and its $(j,i)^\text{th}$ block equals -$E^T$ and all other blocks are zero.  Similarly define $F_{ij}$, $X_{ij}$ and $Y_{ij}$.  A basis of $\mg$ is formed from $\{H_1,...,H_n\}$ together with all $E$'s, $F$'s, $X$'s and $Y$'s with $i<j$.  In the case $n=2$, this basis looks like:

\scalebox{.7}{\parbox{\textwidth}{
\begin{align*}
H_{1} &= \left(\begin{matrix} 0 & 1 & 0 & 0\\ -1 & 0 & 0 & 0 \\ 0 & 0 & 0 & 0 \\ 0 & 0 & 0 & 0\end{matrix}\right),
E_{12} &= \left(\begin{matrix} 0 & 0 & 1 & 0\\ 0 & 0 & 0 & 1 \\ -1 & 0 & 0 & 0 \\ 0 & -1 & 0 & 0\end{matrix}\right),
X_{12} &= \left(\begin{matrix} 0 & 0 & 0 & 1\\ 0 & 0 & 1 & 0 \\ 0 & -1 & 0 & 0 \\ -1 & 0 & 0 & 0\end{matrix}\right),\\
H_{2} &= \left(\begin{matrix} 0 & 0 & 0 & 0\\ 0 & 0 & 0 & 0 \\ 0 & 0 & 0 & 1 \\ 0 & 0 & -1 & 0\end{matrix}\right),
F_{12} &= \left(\begin{matrix} 0 & 0 & 0 & 1\\ 0 & 0 & -1 & 0 \\ 0 & 1 & 0 & 0 \\ -1 & 0 & 0 & 0\end{matrix}\right),
Y_{12} &= \left(\begin{matrix} 0 & 0 & 1 & 0\\ 0 & 0 & 0 & -1 \\ -1 & 0 & 0 & 0 \\ 0 & 1 & 0 & 0\end{matrix}\right).
\end{align*}}}

Define $\ml_{ij}:=\text{span}\{E_{ij},F_{ij}\}$ and $\mathfrak{k}_{ij}:=\text{span}\{X_{ij},Y_{ij}\}$.  The decomposition of $\mg$ into root spaces is:
$$\mg=\tau\oplus\{\ml_{ij}\mid i<j\}\oplus\{\mk_{ij}\mid i<j\}.$$
Since $[E_{ij},F_{ij}] = 2(H_i-H_j)$ and $[X_{ij},Y_{ij}] = 2(H_i+H_j)$, the dual roots are the following matrices (and their negatives):
\begin{gather*}\hat\alpha_{ij}:=\left[\frac{E_{ij}}{|E_{ij}|},\frac{F_{ij}}{|F_{ij}|}\right] = \frac 12(H_i-H_j),\\ \hat\beta_{ij}:=\left[\frac{X_{ij}}{|X_{ij}|},\frac{Y_{ij}}{|Y_{ij}|}\right] = \frac 12(H_i+H_j).\end{gather*}
The following lists all sums and differences of dual roots which equal dual roots.  In each case, a sample bracket value is provided:
\begin{align*}
\hat\alpha_{ij}+\hat\alpha_{jk}&=\hat\alpha_{ik} & [\ml_{ij},\ml_{jk}]&\subset\ml_{ik} & [E_{ij},E_{jk}]&=E_{ik}\\
\hat\beta_{ij}-\hat\beta_{jk} &= \hat\alpha_{ik} & [\mk_{ij},\mk_{jk}]&\subset\ml_{ik} & [X_{ij},X_{jk}]&=E_{ik}\\
\hat\alpha_{ij}+\hat\beta_{jk} &= \hat\beta_{ik} & [\ml_{ij},\mk_{jk}]&\subset\mk_{ik} & [E_{ij},X_{jk}]&=X_{ik}
\end{align*}
The brackets of any pair of basis elements can be determined from the above sample bracket values via Table~\ref{genijk}, yielding:

\begin{table}[h!]\begin{center}
\begin{tabular}{ | c || c | c |} \hline
   $\mathbf{[\cdot,\cdot]}$ & $\mathbf{E_{jk}}$ & $\mathbf{F_{jk}}$  \\ \hline\hline
   $\mathbf{E_{ij}}$ & $E_{ik}$ & $F_{ik}$  \\ \hline
   $\mathbf{F_{ij}}$ & $F_{ik}$ & $-E_{ik}$ \\ \hline
\end{tabular}\,\,\,
\begin{tabular}{ | c || c | c |} \hline
   $\mathbf{[\cdot,\cdot]}$ & $\mathbf{X_{jk}}$ & $\mathbf{Y_{jk}}$  \\ \hline\hline
   $\mathbf{X_{ij}}$ & $E_{ik}$ & $-F_{ik}$  \\ \hline
   $\mathbf{Y_{ij}}$ & $F_{ik}$ & $E_{ik}$ \\ \hline
\end{tabular}\,\,\,
\begin{tabular}{ | c || c | c |} \hline
   $\mathbf{[\cdot,\cdot]}$ & $\mathbf{X_{jk}}$ & $\mathbf{Y_{jk}}$  \\ \hline\hline
   $\mathbf{E_{ij}}$ & $X_{ik}$ & $Y_{ik}$  \\ \hline
   $\mathbf{F_{ij}}$ & $Y_{ik}$ & $-X_{ik}$ \\ \hline
\end{tabular}\end{center}
\caption{The non-zero bracket relations for $\mg=so(2n)$}\label{so2nbraks}
\end{table}

\section{The structure of $\mg=so(2n+1)$}
Let $n>0$ and $G=SO(2n+1)$, so $\mg=so(2n+1)$.  Recall that the Lie algebra of the standard maximal torus of $G$ is:
$$\tau=\left\{\diag\left(\left(\begin{matrix} 0 & \theta_1 \\ -\theta_1 & 0\end{matrix}\right),...,\left(\begin{matrix} 0 & \theta_n \\ -\theta_n & 0\end{matrix}\right),0\right)\mid \theta_i\in\R \right\}.$$
Each of the previously defined elements of $so(2n)$ can be considered as an element of $so(2n+1)$ simply by adding a final row and final column of zeros.  In order to complete our previous basis of $so(2n)$ to a basis of $so(2n+1)$, we need the following additional matrices.  For each $1\leq i\leq n$, let $W_i\in so(2n+1)$ denote the matrix with entry $(2i-1,2n+1)$ equal to $1$ and entry $(2n+1,2i-1)$ equal to $-1$, and all other entries equal to zero.  Let $V_i$ denote the matrix with entry $(2i,2n+1)$ equal to $1$ and entry $(2n+1,2i)$ equal to $-1$, and all other entries equal to zero.  For $n=2$, these extra basis elements are:

\scalebox{.6}{\parbox{\textwidth}{
\begin{equation*}
W_{1} = \left(\begin{matrix} 0 & 0 & 0 & 0 & 1\\ 0 & 0 & 0 & 0 & 0 \\ 0 & 0 & 0 & 0 & 0\\ 0 & 0 & 0 & 0 & 0 \\ -1 & 0 & 0 & 0 & 0\end{matrix}\right),
V_{1} = \left(\begin{matrix} 0 & 0 & 0 & 0 & 0\\ 0 & 0 & 0 & 0 & 1 \\ 0 & 0 & 0 & 0 & 0\\ 0 & 0 & 0 & 0 & 0 \\ 0 & -1 & 0 & 0 & 0\end{matrix}\right),
W_{2} = \left(\begin{matrix} 0 & 0 & 0 & 0 & 0\\ 0 & 0 & 0 & 0 & 0 \\ 0 & 0 & 0 & 0 & 1\\ 0 & 0 & 0 & 0 & 0 \\ 0 & 0 & -1 & 0 & 0\end{matrix}\right),
V_{2} = \left(\begin{matrix} 0 & 0 & 0 & 0 & 0\\ 0 & 0 & 0 & 0 & 0 \\ 0 & 0 & 0 & 0 & 0\\ 0 & 0 & 0 & 0 & 1 \\ 0 & 0 & 0 & -1 & 0\end{matrix}\right).
\end{equation*}}}

Define $\ms_i:=\text{span}\{V_i,W_i\}$.  The root space decomposition is:
$$\mg=\tau\oplus\{\ml_{ij}\mid i<j\}\oplus\{\mk_{ij}\mid i<j\}\oplus\{\ms_i\mid 1\leq i\leq n\}.$$
Since $[V_i,W_i] = H_i$, the added dual roots are the following matrices (and their negatives):
$$\hat\gamma_{i}:=\left[\frac{V_i}{|V_i|},\frac{W_{i}}{|W_{i}|}\right] = \frac 12 H_i.$$
In addition to those of $so(2n)$, we have the following new sums and differences of dual roots which equal dual roots.  In each case, a sample bracket value is provided.
\begin{align*}
&\hat\alpha_{ij}-\hat\gamma_i=-\hat\gamma_j & &[\ml_{ij},\ms_i]\subset\ms_j & &[E_{ij},V_i]=-V_j \\
&\hat\alpha_{ij}+\hat\gamma_j=\hat\gamma_i  & &[\ml_{ij},\ms_j]\subset\ms_i & &[E_{ij},V_j]=V_i \\
&\hat\beta_{ij}-\hat\gamma_i=\hat\gamma_j   & &[\mk_{ij},\ms_i]\subset\ms_j & &[X_{ij},V_i]=-W_j \\
&\hat\beta_{ij}-\hat\gamma_j=\hat\gamma_i   & &[\mk_{ij},\ms_j]\subset\ms_i & &[X_{ij},V_j]=W_i \\
&\left.\begin{matrix}\hat\gamma_i-\hat\gamma_j =\hat\alpha_{ij} \\ \hat\gamma_i+\hat\gamma_j=\hat\beta_{ij}\end{matrix}\right\}
              & & [\ms_i,\ms_j]\subset\ml_{ij}\oplus\mh_{ij} &  &[V_i,V_j]=- \frac 12 E_{ij}+\frac 12 Y_{ij}
\end{align*}
Notice that $(\hat\gamma_i,\hat\gamma_j)$ is our first example of a pair of dual roots whose sum and difference \emph{both} equal dual roots.
Using Table~\ref{genijk}, the new bracket relations (in addition to those of Table~\ref{so2nbraks}) are summarized in Table~\ref{bracketodd}.

\begin{table}[h!]\begin{center}
\begin{tabular}{ | c || c | c |} \hline
   $\mathbf{[\cdot,\cdot]}$ & $\mathbf{V_i}$ & $\mathbf{W_i}$  \\ \hline\hline
   $\mathbf{E_{ij}}$ & $-V_j$ & $-W_j$  \\ \hline
   $\mathbf{F_{ij}}$ & $W_j$ & $-V_j$ \\ \hline
\end{tabular}\,\,\,
\begin{tabular}{ | c || c | c |} \hline
   $\mathbf{[\cdot,\cdot]}$ & $\mathbf{V_j}$ & $\mathbf{W_j}$  \\ \hline\hline
   $\mathbf{E_{ij}}$ & $V_i$ & $W_i$  \\ \hline
   $\mathbf{F_{ij}}$ & $W_i$ & $-V_i$ \\ \hline
\end{tabular}\,\,\,
\begin{tabular}{ | c || c | c |} \hline
   $\mathbf{[\cdot,\cdot]}$ & $\mathbf{V_i}$ & $\mathbf{W_i}$  \\ \hline\hline
   $\mathbf{X_{ij}}$ & $-W_j$ & $-V_j$  \\ \hline
   $\mathbf{Y_{ij}}$ & $V_j$ & $-W_j$ \\ \hline
\end{tabular}

\vspace{.1in}
\begin{tabular}{ | c || c | c |} \hline
   $\mathbf{[\cdot,\cdot]}$ & $\mathbf{V_j}$ & $\mathbf{W_j}$  \\ \hline\hline
   $\mathbf{X_{ij}}$ & $W_i$ & $V_i$  \\ \hline
   $\mathbf{Y_{ij}}$ & $-V_i$ & $W_i$ \\ \hline
\end{tabular}\,\,\,
\begin{tabular}{ | c || c | c |} \hline
   $\mathbf{[\cdot,\cdot]}$ & $\mathbf{V_j}$ & $\mathbf{W_j}$  \\ \hline\hline
   $\mathbf{V_i}$ & $-\frac 12 E_{ij} + \frac 12 Y_{ij}$ & $\frac 12 F_{ij} - \frac 12 X_{ij}$  \\ \hline
   $\mathbf{W_i}$ & $-\frac 12 F_{ij} - \frac 12 X_{ij}$ & $-\frac 12 E_{ij} - \frac 12 Y_{ij}$ \\ \hline
\end{tabular}
\end{center}
\caption{The additional bracket relations for $\mg=so(2n+1)$}\label{bracketodd}
\end{table}

\section{The structure of $\mg=sp(n)$}
Let $n>0$ and $G=Sp(n)$, so $\mg=sp(n)$.  Recall that the Lie algebra of the standard maximal torus of $G$ is:
$$\tau=\{\diag(\theta_1\ii,...,\theta_n\ii)\mid \theta_i\in\R\}.$$

For each index $i$, let $H_i$ denote the diagonal matrix with $\ii$ in position $(i,i)$ (and all other entries zero).  Let $J_i$ denote the diagonal matrix with $\jj$ in position $(i,i)$ (and all other entries zero), and let $K_i$ denote the diagonal matrix with $\kk$ in position $(i,i)$.  Notice that $H_i\in\tau$ and $J_i, K_i\in\tau^\perp$.

For each pair $(i,j)$ of distinct indices between $1$ and $n$, let $E_{ij}$ denote the matrix with $+1$ in position $(i,j)$ and $-1$ in position $(j,i)$.  Let $F_{ij}\in\mg$ denote the matrix with $\ii$ in positions $(i,j)$ and $(j,i)$.  Let $A_{ij}\in\mg$ denote the matrix with $\jj$ in positions $(i,j)$ and $(j,i)$.  Let $B_{ij}\in\mg$ denote the matrix with $\kk$ in positions $(i,j)$ and $(j,i)$.

The root space decomposition is:
\begin{eqnarray*}
sp(n) & = & \tau\oplus\{\text{span}\{E_{ij},F_{ij}\}\mid i<j\}\oplus\{\text{span}\{A_{ij},B_{ij}\}\mid i<j\}\\
      &   & \oplus\{\text{span}\{J_i,K_i\}\mid 1\leq i\leq n\}
\end{eqnarray*}
The dual roots are the following matrices (and their negatives):
\begin{gather*}
\hat\alpha_{ij}:=\left[\frac{E_{ij}}{|E_{ij}|},\frac{F_{ij}}{|F_{ij}|}\right] = H_i-H_j\\ \hat\beta_{ij}:=\left[\frac{A_{ij}}{|A_{ij}|},\frac{B_{ij}}{|B_{ij}|}\right] = H_i+H_j\\
\hat\gamma_{i}:=\left[\frac{J_{i}}{|J_{i}|},\frac{K_{i}}{|K_{i}|}\right] = 2H_i
\end{gather*}
Think of these dual roots initially as unrelated to the dual roots of $SO(2n+1)$ which bore the same names, but look for similarities.  The only sums or differences of dual roots which equal dual roots are listed below, with sample bracket values provided:
\begin{align*}
\hat\alpha_{ij}+\hat\alpha_{jk}&=\hat\alpha_{ik}  & [E_{ij},E_{jk}]&=2E_{ik}\\
\hat\beta_{ij}-\hat\beta_{jk} &= \hat\alpha_{ik}  & [A_{ij},A_{jk}]&=-2E_{ik}\\
\hat\alpha_{ij}+\hat\beta_{jk} &= \hat\beta_{ik}  & [E_{ij},A_{jk}]&=2A_{ik}\\
\hat\beta_{ij}-\hat\gamma_i & = -\hat\alpha_{ij}      & [A_{ij},J_i]&=2E_{ij}\\
\hat\beta_{ij}-\hat\gamma_j & = \hat\alpha_{ij}           & [A_{ij},J_j]&=-2E_{ij}\\
\hat\alpha_{ij}-\hat\gamma_i & = -\hat\beta_{ij}       & [A_{ij},J_i]&=-2A_{ij}\\
\hat\alpha_{ij} + \hat\gamma_j &= \hat\beta_{ij}         & [E_{ij},J_j]&=2A_{ij}
\end{align*}

We leave it to the reader in Exercise~\ref{spex} to list all non-zero brackets of pairs of basis vectors, using the above sample values together with Table~\ref{genijk}.
\section{The Weil Group}
Let $G$ be a compact Lie group with Lie algebra $\mg$.  Let $T\subset G$ be a maximal torus with Lie algebra $\tau\subset\mg$.  In this section, we will define and study the Weil group of $G$, which can be thought of as a group of symmetries of the roots of $G$.

First, let $N(T)$ denote the \underline{normalizer} of $T$, which means:
$$N(T):=\{g\in G\mid g T g^{-1}=T\}.$$
It is routine to check that $N(T)$ is a subgroup of $G$ and that $T$ is a normal subgroup of $N(T)$.

For each $g\in N(T)$, conjugation by $g$ is an automorphism of $T$, denoted $C_g:T\ra T$.  The derivative of $C_g$ at $I$ is the Lie algebra automorphism $\Ad_g:\tau\ra\tau$.  In fact, it is straightforward to see:
$$N(T) = \{g\in G\mid\Ad_g(\tau)=\tau\}.$$
One should expect automorphisms to preserve all of the fundamental structures of a Lie algebra, including its roots and dual roots.
\begin{prop}\label{permutedualroots}
For each $g\in N(T)$, $\Ad_g:\tau\ra\tau$ sends dual roots to dual roots.
\end{prop}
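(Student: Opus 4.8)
The plan is to exploit the characterization of roots via the operator $\ad_X^2$ from Proposition~\ref{regvec}, which is intrinsic and hence respects the automorphism $\Ad_g$. Fix $g\in N(T)$ and write $\phi:=\Ad_g$, which restricts to a linear automorphism $\phi:\tau\ra\tau$ (this is the defining property of $N(T)$) and, being $\Ad$ of a group element, preserves the inner product and the bracket: $\phi[A,B]=[\phi A,\phi B]$ for all $A,B\in\mg$, and $\lb\phi A,\phi B\rb=\lb A,B\rb$. I want to show $\phi$ carries each dual root to a dual root, equivalently (by Proposition~\ref{uniqq}) that the list $\{\pm\hat\alpha_1,\dots,\pm\hat\alpha_m\}$ is permuted, up to sign, by $\phi$.

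First I would pick a strongly regular $X\in\tau$, which exists by Proposition~\ref{regularopendense}. Because $\phi$ is a Lie algebra automorphism, $\phi\circ\ad_X\circ\phi^{-1}=\ad_{\phi(X)}$, and squaring gives $\phi\circ\ad_X^2\circ\phi^{-1}=\ad_{\phi(X)}^2$. Thus $\phi$ carries the eigenspace decomposition of $\ad_X^2$ onto that of $\ad_{\phi(X)}^2$. Since $X$ is strongly regular, Proposition~\ref{regvec} identifies the eigenspaces of $\ad_X^2$ as exactly $\tau,\ml_1,\dots,\ml_m$ with eigenvalues $0,-\alpha_1(X)^2,\dots,-\alpha_m(X)^2$; applying $\phi$, the spaces $\tau,\phi(\ml_1),\dots,\phi(\ml_m)$ are eigenspaces of $\ad_{\phi(X)}^2$ with the same eigenvalues. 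But $\phi(\ml_i)$ is a $2$-dimensional $\ad_\tau$-invariant subspace of $\tau^\perp$ (invariance because $\phi$ intertwines $\ad_X$ with $\ad_{\phi X}$ and $\phi(\tau)=\tau$; contained in $\tau^\perp$ because $\phi$ preserves the inner product and fixes $\tau$ setwise). By the uniqueness in Proposition~\ref{uniqq}, the collection $\{\phi(\ml_1),\dots,\phi(\ml_m)\}$ must equal $\{\ml_1,\dots,\ml_m\}$ as a set: each $\phi(\ml_i)$ is some $\ml_{\sigma(i)}$ for a permutation $\sigma$.

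It then remains to pass from "$\phi$ permutes the root spaces" to "$\phi$ permutes the dual roots up to sign." Given $\phi(\ml_i)=\ml_{\sigma(i)}$, the orthonormal ordered basis $\{\phi E_i,\phi F_i\}$ of $\ml_{\sigma(i)}$ differs from the chosen $\{E_{\sigma(i)},F_{\sigma(i)}\}$ by an element of $O(2)$; by the sign-ambiguity discussion preceding Proposition~\ref{independentsign}, Definition~\ref{D:dualroot} applied to this rotated basis produces the root $\pm\alpha_{\sigma(i)}$ with dual root $\pm\hat\alpha_{\sigma(i)}$. Concretely, $[\phi E_i,\phi F_i]=\phi[E_i,F_i]=\phi(\hat\alpha_i)$, and the same computation as in the proof of Proposition~\ref{EiFi} — using that $\{\phi E_i,\phi F_i\}$ is an orthonormal basis of the $\ad_\tau$-invariant space $\ml_{\sigma(i)}$ — shows $\phi(\hat\alpha_i)$ is the dual root for the root determined by that basis, hence $\phi(\hat\alpha_i)=\pm\hat\alpha_{\sigma(i)}$. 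Therefore $\phi$ sends dual roots to dual roots.

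I expect the main obstacle to be the bookkeeping in this last paragraph: one must be careful that applying $\phi$ to a basis can reverse orientation, so the target is only determined up to sign, and one should either invoke the orientation-independence built into Proposition~\ref{independentsign} or simply observe that $\{\pm\hat\alpha_j\}$ is a sign-symmetric set so the sign ambiguity is harmless. Everything else — the intertwining relation $\phi\ad_X^2\phi^{-1}=\ad_{\phi X}^2$, the preservation of $\tau^\perp$, and the appeal to uniqueness — is routine given the earlier results.
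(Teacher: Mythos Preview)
Your proof is correct, but it takes a considerably longer route than the paper's. The paper argues directly from Definition~\ref{D:dualroot}: given a dual root $\hat\alpha$ with root space $\ml=\spann\{E,F\}$, it simply computes
\[
[X,\Ad_gE]=\Ad_g\bigl[\Ad_{g^{-1}}X,E\bigr]=\lb\Ad_{g^{-1}}X,\hat\alpha\rb\cdot\Ad_gF=\lb X,\Ad_g\hat\alpha\rb\cdot\Ad_gF,
\]
and similarly for $F$, immediately exhibiting $\Ad_g(\ml)$ as a root space with dual root $\Ad_g\hat\alpha$. This is a three-line argument using only the definition, the fact that $\Ad_g$ is a Lie algebra automorphism, and $\Ad$-invariance of the inner product. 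Your approach instead detours through strongly regular vectors, the eigenspace characterization of Proposition~\ref{regvec}, and the uniqueness statement of Proposition~\ref{uniqq} to first establish that $\phi=\Ad_g$ permutes the root spaces, and only then deduces the statement about dual roots. In fact your final paragraph---computing $[\phi E_i,\phi F_i]=\phi(\hat\alpha_i)$ and invoking Proposition~\ref{EiFi} on the orthonormal basis $\{\phi E_i,\phi F_i\}$ of an $\ad_\tau$-invariant $2$-plane---is essentially the paper's argument, and with minor rewriting would stand on its own without the eigenspace preamble. Your route does make the induced permutation of root spaces explicit, which is conceptually nice and foreshadows the Weil-group action, but for the proposition as stated the paper's direct verification is both shorter and avoids appealing to the deeper Lemma~\ref{notpar} on which Propositions~\ref{regularopendense} and~\ref{uniqq} rest.
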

\begin{proof}
If $\hat\alpha\in\tau$ is a dual root with root space $\ml=\text{span}\{E,F\}$, then $\Ad_g\hat\alpha\in\tau$ is a dual root with root space $\Ad_g(\ml):=\text{span}\{\Ad_g E,\Ad_g F\}$.  This is because for all $X\in\tau$,
\begin{eqnarray*}
[X,\Ad_g E] & = & \Ad_g\left([\Ad_{g^{-1}}X,E]\right) = \Ad_g\left(\lb Ad_{g^{-1}} X,\hat\alpha\rb\cdot F \right) \\
            & = & \lb Ad_{g^{-1}} X,\hat\alpha\rb\cdot\Ad_g F = \lb X,\Ad_g\hat\alpha\rb\cdot\Ad_gF.
\end{eqnarray*}
Similarly, $[X,\Ad_gF] = -\lb X,\Ad_g\hat\alpha\rb\cdot \Ad_gE$, so $\Ad_g\hat\alpha$ is a dual-root according to Definition~\ref{D:dualroot}.
\end{proof}

Since the conjugates of $T$ cover $G$, $N(T)$ is not all of $G$.  In fact, we expect $N(T)$ to be quite small.  The following shows at least that $N(T)$ is larger than $T$.

\begin{prop}\label{hyper}
For each dual root, $\hat\alpha$, of $G$, there exists an element $g\in N(T)$ such that $\Ad_g(\hat\alpha)=-\hat\alpha$, and $\Ad_g(X)=X$ for all $X\in\tau$ with $X\perp\hat\alpha$.
\end{prop}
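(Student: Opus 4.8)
The plan is to realize the desired $g$ as an element of the subgroup $\exp(\ml)$, where $\ml = \mathrm{span}\{E,F\}$ is the root space for $\hat\alpha$, together with the fact that $\ml \oplus \mathbb{R}\hat\alpha$ is closed under brackets and is isomorphic to $\mathfrak{su}(2)$. Concretely, I would first normalize so that $|E| = |F| = 1$, and then examine the three-dimensional subspace $\mathfrak{l}' := \mathrm{span}\{E, F, \hat\alpha\}$ of $\mg$. Using Proposition~\ref{EiFi} (which gives $[\hat\alpha, E] = \alpha(\hat\alpha) F$ and $[\hat\alpha, F] = -\alpha(\hat\alpha) E$ with $\alpha(\hat\alpha) = |\hat\alpha|^2 > 0$) together with $[E,F] = \hat\alpha$, one checks that $\mathfrak{l}'$ is a Lie subalgebra, and after rescaling the basis appropriately it is isomorphic to $\mathfrak{su}(2)$. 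Hence the connected subgroup $H \subset G$ with Lie algebra $\mathfrak{l}'$ is a quotient of $SU(2)$, so in particular it is compact, and it contains the circle $\exp(\mathbb{R}\hat\alpha)$ as a maximal torus.

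Next I would produce the element $g$ explicitly inside $H$. In $\mathfrak{su}(2)$, conjugation by a suitable group element (the image of $\left(\begin{smallmatrix} 0 & 1 \\ -1 & 0\end{smallmatrix}\right)$, i.e. $g = \exp\!\big(\tfrac{\pi}{|E|\,|F|}\,? \big)$ — more precisely $g = \exp(c\,E)$ for the appropriate constant $c$ making this a half-rotation) sends the diagonal Cartan direction to its negative. Translating back: I would set $g := \exp\!\big(\tfrac{\pi}{2}\cdot \tfrac{E}{\text{(normalization)}}\big)$ and verify, using the $\mathfrak{su}(2)$ bracket relations and the formula $\Ad_{\exp Z} = e^{\ad_Z}$, that $\Ad_g(\hat\alpha) = -\hat\alpha$. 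Then I would check $g \in N(T)$: since $\Ad_g$ fixes $\hat\alpha^\perp \cap \tau$ pointwise (see below) and sends $\hat\alpha$ to $-\hat\alpha \in \tau$, we get $\Ad_g(\tau) = \tau$, which by the identity $N(T) = \{g : \Ad_g(\tau) = \tau\}$ stated just before Proposition~\ref{permutedualroots} places $g$ in $N(T)$.

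The claim that $\Ad_g$ fixes every $X \in \tau$ with $X \perp \hat\alpha$ is the cleanest part: such an $X$ satisfies $\alpha(X) = \langle \hat\alpha, X\rangle = 0$, so by Proposition~\ref{EiFi} we have $[X, E] = [X, F] = 0$, hence $X$ commutes with all of $\mathfrak{l}'$, hence $\ad_X$ annihilates $\mathfrak{l}'$, hence $\Ad_{\exp(Z)} X = e^{-\ad_Z} X \cdots$ — more directly, $[E, X] = 0$ gives $\Ad_g X = e^{\ad_{cE}} X = X$. The main obstacle I anticipate is pinning down the exact constant in the definition of $g$ so that $\Ad_g$ on the two-plane $\mathrm{span}\{E, \hat\alpha\} \subset \mathfrak{l}'$ (the plane on which the $\mathfrak{su}(2)$-rotation acts nontrivially) is rotation by exactly $\pi$ rather than some other angle; this requires correctly tracking the eigenvalue $\alpha(\hat\alpha) = |\hat\alpha|^2$ of $\ad_{\hat\alpha}$ and the structure constant in $[E,F] = \hat\alpha$, i.e. understanding the precise $\mathfrak{su}(2)$-triple $\{E, F, \hat\alpha\}$ up to scaling. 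Everything else reduces to the $2\times 2$ and $3\times 3$ matrix-exponential computations already used in Section~4 (the rotation formulas~\eqref{Rotatead} and~\eqref{RotateAdad}).
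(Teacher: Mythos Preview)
Your approach is essentially the paper's: take $g=e^{t_0 F}$ (equivalently $e^{t_0 E}$), use the bracket relations from Proposition~\ref{EiFi} to see that $\ad_F$ kills $\hat\alpha^\perp\cap\tau$ and rotates a $2$-plane containing $\hat\alpha$, and then choose $t_0$ to make this a half-rotation. The paper bypasses the $\mathfrak{su}(2)$ framing and simply computes $\ad_F(E)=-|\hat\alpha|\cdot\frac{\hat\alpha}{|\hat\alpha|}$ and $\ad_F\!\left(\frac{\hat\alpha}{|\hat\alpha|}\right)=|\hat\alpha|\cdot E$, from which $t_0=\pi/|\hat\alpha|$ is read off immediately; one small slip in your write-up is that if you exponentiate $cE$, the plane that rotates is $\mathrm{span}\{F,\hat\alpha\}$, not $\mathrm{span}\{E,\hat\alpha\}$, since $\ad_E E=0$.
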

In other words, we can visualize $\Ad_g:\tau\ra\tau$ as a reflection through the ``hyperplane'' $\hat\alpha^\perp:=\{X\in\tau\mid X\perp\hat\alpha\}$.
\begin{proof}
Let $\hat\alpha$ be a dual root with root space $\ml=\text{span}\{E,F\}$.  Since $t\mapsto e^{tF}$ is a one-parameter group in $G$, $t\mapsto\Ad_{e^{tF}}$ is a one-parameter group of orthogonal automorphisms of $\mg$, with initial derivative equal to $\ad_F$.  Notice that:
\begin{gather*}
\text{For all }X\in\hat\alpha^\perp,\ad_F(X)=-[X,F] = \lb X,\hat\alpha\rb\cdot E = 0,  \\
\ad_F(E) =-[E,F] =-\hat\alpha = - |\hat\alpha|\cdot\frac{\hat\alpha}{|\hat\alpha|}, \\
\ad_F\left(\frac{\hat\alpha}{|\hat\alpha|}\right) = -\left[\frac{\hat\alpha}{|\hat\alpha|},F\right] = \left\langle\frac{\hat\alpha}{|\hat\alpha|},\hat\alpha\right\rangle \cdot E = |\hat\alpha|\cdot E.
\end{gather*}
Therefore $t\mapsto\Ad_{e^{tF}}=e^{\ad_{tF}}$ is a one-parameter group of orthogonal automorphisms of $\mg$ which acts as the identity on $\hat\alpha^\perp\subset\tau$ and which rotates $\text{span}\left\{\frac{\hat\alpha}{|\hat\alpha|},E\right\}$ with period $\frac{2\pi}{|\hat\alpha|}$.  Thus, at time $t_0:=\frac{\pi}{|\hat\alpha|}$, the rotation is half complete, so it sends $\hat\alpha\mapsto-\hat\alpha$.  Thus, the element $g=e^{t_0F}$ lies in $N(T)$ and acts on $\tau$ as claimed in the proposition.
\end{proof}

We would like to think of $N(T)$ as a group of orthogonal automorphisms of $\tau$, but the problem is that different elements of $N(T)$ may determine the same automorphism of $\tau$:
\begin{lem} For a pair $a,b\in N(T)$, $\Ad_a=\Ad_b$ on $\tau$ if and only if $a$ and $b$ lie in the same coset of $N(T)/T$.
\end{lem}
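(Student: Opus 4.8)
The plan is to reduce the whole statement to Proposition~\ref{summary}.3. Since $T$ is normal in $N(T)$, the condition that $a$ and $b$ lie in the same coset of $N(T)/T$ says exactly that $c:=a^{-1}b$ lies in $T$; and because $\Ad$ is a homomorphism with $a^{-1},b\in N(T)$, we have $\Ad_c=(\Ad_a)^{-1}\Ad_b$ as automorphisms of $\tau$, so $\Ad_a=\Ad_b$ on $\tau$ is equivalent to $\Ad_c=\text{Id}$ on $\tau$. Hence it suffices to prove, for $c\in N(T)$, that $\Ad_c|_\tau=\text{Id}$ if and only if $c\in T$.

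The direction ($\Leftarrow$) is immediate and I would dispatch it in one line: if $c\in T$, then conjugation $C_c:T\ra T$ is the identity map since $T$ is abelian, so its derivative at $I$, which is $\Ad_c:\tau\ra\tau$, is the identity.

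For ($\Rightarrow$), I would assume $\Ad_c=\text{Id}$ on $\tau$ and first promote this infinitesimal identity to the group level: since $T$ is connected it is generated by $\{e^X\mid X\in\tau\}$, and $C_c(e^X)=e^{\Ad_c X}=e^X$ for every $X\in\tau$, so the automorphism $C_c$ of $T$ is the identity map; that is, $c$ commutes with every element of $T$. Then, because $c\in N(T)\subseteq G^0$, Proposition~\ref{summary}.3 gives $c\in T$, which is what we wanted.

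The routine parts are the coset bookkeeping and the commutativity of $T$; the one step that deserves care --- and the main (minor) obstacle --- is passing from ``$\Ad_c$ fixes $\tau$ pointwise'' to ``$C_c$ fixes $T$ pointwise.'' I would handle this via the exponential map as above, or equivalently by invoking the general fact that a homomorphism out of a connected Lie group is determined by its differential at $I$. I would also flag that the appeal to Proposition~\ref{summary}.3 uses $c\in G^0$, which is automatic once $G$ is assumed connected (the natural standing hypothesis for the Weil group), since then $N(T)\subseteq G=G^0$.
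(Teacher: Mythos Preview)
Your argument is correct and follows essentially the same route as the paper: reduce to a single element (the paper uses $ab^{-1}$, you use $a^{-1}b$), pass from $\Ad$ on $\tau$ to conjugation on $T$ via the exponential map and connectedness of $T$, and then invoke Proposition~\ref{summary}.3. The paper presents this as a terse chain of equivalences without explicitly justifying the first step or flagging the $G^0$ hypothesis; your version is more careful on both points but not substantively different.
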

\begin{proof}
\begin{eqnarray*}
\Ad_{a}=\Ad_{b} \text { on }\tau & \Longleftrightarrow &C_{a}=C_{b} \text{ on } T\\
                                 & \Longleftrightarrow & C_{ab^{-1}}=\text{I} \text{ on } T \\
                                 & \Longleftrightarrow & ab^{-1}\text{ commutes with every element of }T\\
                                 & \Longleftrightarrow & ab^{-1}\in T
\end{eqnarray*}
\end{proof}

\begin{defn} The \underline{Weil group} of $G$ is $W(G):=N(T)/T$.
\end{defn}

So it is not $N(T)$ but $W(G)$ which should be thought of as a group of orthogonal automorphisms of $\tau$.  Each $w=g\cdot T\in W(G)$ determines the automorphism of $\tau$ which sends $X\in\tau$ to $$w\star X:=\Ad_g X.$$  By the previous Lemma, $w\star X$ is well-defined (independent of the coset representative $g\in N(T)$), and different elements of $W(G)$ determine different automorphisms of $\tau$.


\begin{prop} $W(G)$ is finite.
\end{prop}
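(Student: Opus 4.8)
The plan is to exhibit $W(G)$ as (isomorphic to) a subgroup of a finite group, namely the permutation group of the finite set of dual roots, at least when that action is faithful, and then to reduce the general case to the faithful one. Concretely, by Proposition~\ref{permutedualroots}, every $g \in N(T)$ induces via $\Ad_g$ a permutation of the finite set $\Delta := \{\hat\alpha_1,-\hat\alpha_1,\dots,\hat\alpha_m,-\hat\alpha_m\}$ of dual roots, and this permutation depends only on the coset $gT \in W(G)$ (since $\Ad_g = \Ad_b$ on $\tau$ whenever $gb^{-1} \in T$). This gives a homomorphism $\Phi\colon W(G) \to \text{Sym}(\Delta)$, and $\text{Sym}(\Delta)$ is a finite group of order at most $(2m)!$. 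So it suffices to prove $\Phi$ is injective.

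For injectivity, suppose $w = gT$ lies in $\ker\Phi$, so that $\Ad_g$ fixes every dual root. I would split into two cases according to whether the center of $G$ is finite. If the center is finite, Proposition~\ref{rootsspan} says the dual roots span $\tau$; since $\Ad_g\colon\tau\to\tau$ is linear and fixes a spanning set, $\Ad_g = \text{Id}$ on $\tau$, hence $w\star X = X$ for all $X\in\tau$, and since (by the Lemma preceding the definition of $W(G)$) distinct elements of $W(G)$ give distinct automorphisms of $\tau$, we get $w$ = identity. This handles all the classical groups $SO(n)$ (with $n>2$), $SU(n)$, $Sp(n)$ by Proposition~9.10.

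The main obstacle is the general case, where the center may be positive-dimensional and the dual roots need not span $\tau$ — here $\ker\Phi$ could a priori be infinite. The fix is to restrict the $W(G)$-action to the subspace $\tau' := \text{span}\{\hat\alpha_1,\dots,\hat\alpha_m\} \subseteq \tau$ and to an orthogonal complement. Since each $\Ad_g$ ($g\in N(T)$) is an orthogonal automorphism of $\tau$ permuting the dual roots, it preserves $\tau'$ and hence also $\tau'^\perp$ inside $\tau$. On $\tau'^\perp$, every dual root vanishes as a linear functional, so one checks that $[X,A]=0$ for all $X\in\tau'^\perp$ and all $A\in\mg$, which means $\tau'^\perp$ is central in $\mg$; a group-level argument (as in the proof of Proposition~\ref{rootsspan}) shows $\exp(\tau'^\perp)$ lies in the center $Z(G)$, and an element of $N(T)$ acting as the identity on $\tau'$ and differing from some coset representative only by such a central direction must in fact lie in $T$ — because the central torus is contained in every maximal torus, in particular in $T$. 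Therefore $\Phi$ is injective after all, and $W(G)$ embeds in the finite group $\text{Sym}(\Delta)$, so $W(G)$ is finite.

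I expect the delicate point to be the last reduction: pinning down precisely why an element of $N(T)$ that acts trivially on $\tau'$ lies in $T$, which rests on the fact that the identity component of $Z(G)$ is a torus contained in $T$ (so that the ``extra'' directions in $\tau'^\perp$ contribute nothing new to $W(G) = N(T)/T$). An alternative, possibly cleaner route avoids cases entirely: observe that $N(T)$ is a closed subgroup of the compact group $G$, hence compact, and that $N(T)/T = W(G)$ is therefore a compact Lie group; since $W(G)$ acts faithfully on the finite set $\Delta$ — faithfulness being exactly the injectivity of $\Phi$ argued above — a compact group with a faithful action on a finite set is finite. Either way, the crux is the faithfulness of the action on dual roots.
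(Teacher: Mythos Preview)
Your proof is correct and follows essentially the same approach as the paper: embed $W(G)$ in the symmetric group on the $2m$ dual roots via Proposition~\ref{permutedualroots}, then prove injectivity by showing that any $w$ fixing all dual roots acts trivially on $\tau$. The paper avoids your case split by arguing uniformly: the span $\tau'$ of the dual roots has orthogonal complement equal to the Lie algebra of the center (this is what the proof of Proposition~\ref{rootsspan} actually gives), and every $\Ad_g$ acts as the identity there simply because conjugation fixes central elements---so agreement on $\tau'$ forces agreement on all of $\tau$. Your general-case paragraph arrives at the same conclusion, but the phrase about ``differing from some coset representative only by such a central direction'' and the appeal to the central torus sitting inside $T$ is more circuitous than needed; the clean step is just $\Ad_g|_{\tau'^\perp}=\text{Id}$ for all $g\in G$, which combined with $\Ad_g|_{\tau'}=\text{Id}$ and the Lemma preceding the definition of $W(G)$ gives $g\in T$.
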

\begin{proof}
By the above remarks, $W(G)$ is isomorphic to a subgroup of the group of automorphisms of $\tau$.  By Proposition~\ref{permutedualroots}, each $w\in W(G)$ determines a permutation of the $2m$ dual roots of $G$.  If two elements $w_1,w_2\in W$ determine the same permutation of the dual roots, then they determine the same linear map on the span of the dual roots.  The proof of Proposition~\ref{rootsspan} shows that the span of the dual roots equals the orthogonal compliment in $\tau$ of the Lie algebra of the center of $G$.  Since each element of $W(G)$ acts as the identity on Lie algebra of the center of $G$, this shows that $w_1$ and $w_2$ determine the same automorphism of $\tau$, and therefore $w_1=w_2$.  Thus, different elements of $W(G)$ must determine different permutations of the dual roots.  It follows that $W(G)$ is isomorphic to a subgroup of the group of permutations of the $2m$ dual roots, and thus has finite order which divides $(2m)!$
\end{proof}

Proposition~\ref{hyper} guarantees that for each dual root $\hat\alpha$, there exists an element $w_{\hat\alpha}\in W(G)$ such that $w_{\hat\alpha}\star\hat\alpha = -\hat\alpha$ and $w_{\hat\alpha}\star X=X$ for all $X\in\hat\alpha^\perp$.  It turns out that such elements generate $W(G)$:
\begin{prop} \label{BD} Every element of $W(G)$ equals a product of finitely many of the $w_{\hat\alpha}$'s.
\end{prop}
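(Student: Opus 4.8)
The plan is to work geometrically in $\tau$, treating $W(G)$ as the finite group of orthogonal automorphisms generated by \emph{all} of $W(G)$ and comparing it to the subgroup $W'\subseteq W(G)$ generated by the reflections $w_{\hat\alpha}$. First I would recall that, by Proposition~\ref{permutedualroots}, every $w\in W(G)$ permutes the finite set $\mathcal{R}:=\{\hat\alpha_1,\dots,\hat\alpha_m,-\hat\alpha_1,\dots,-\hat\alpha_m\}$ of dual roots, and (by the proof of Proposition~\ref{rootsspan} combined with the argument in the finiteness proposition) that $w$ is completely determined by the induced permutation of $\mathcal{R}$, since $\tau$ splits orthogonally as $\mathrm{span}(\mathcal{R})\oplus\mathfrak{z}$ with $\mathfrak{z}$ the Lie algebra of the center, on which every $w$ acts trivially. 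So it suffices to show that $W'$ already realizes every permutation of $\mathcal{R}$ that comes from some $w\in W(G)$.

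Next I would fix $w\in W(G)$ and try to multiply it by reflections $w_{\hat\alpha}$ to kill it. The natural strategy is an induction on a length-type statistic. Choose a strongly regular vector $X_0\in\tau$ (Proposition~\ref{regularopendense}), so that $\langle \hat\alpha_i, X_0\rangle \neq 0$ for all $i$; this separates $\mathcal{R}$ into ``positive'' dual roots (those with $\langle\hat\alpha,X_0\rangle>0$) and their negatives. For $w\in W(G)$, let $n(w)$ be the number of positive dual roots $\hat\alpha$ with $w\star\hat\alpha$ negative. If $n(w)=0$, then $w$ fixes $X_0$ (because $w\star X_0$ has the same inner product with every positive dual root as $X_0$ does, hence $w\star X_0 - X_0 \perp \mathrm{span}(\mathcal{R})$, and $w\star X_0-X_0\in\mathfrak{z}$ is impossible for a strongly regular $X_0$ unless it is zero — here I'd need that strongly regular vectors are not orthogonal to $\mathrm{span}(\mathcal R)$, which holds since $\mathrm{span}(\mathcal R)^\perp=\mathfrak z$ and $\alpha_i(X_0)\neq 0$). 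A $w$ fixing a strongly regular $X_0$ fixes its $\mathrm{ad}_{X_0}^2$-eigenspace decomposition, hence permutes the $\mathfrak l_i$ among themselves preserving each $\alpha_i^2$, forcing $w\star\hat\alpha_i=\pm\hat\alpha_i$ with the $+$ sign throughout (by positivity), so $w$ acts trivially on $\mathrm{span}(\mathcal R)$ and hence $w=1$. If $n(w)>0$, pick a positive dual root $\hat\alpha$ with $w\star\hat\alpha$ negative; a standard argument (the usual Coxeter/root-system lemma) shows $n(w_{\hat\alpha}w) = n(w)-1$, because $w_{\hat\alpha}$ is the reflection in $\hat\alpha^\perp$ and permutes the positive dual roots other than $\hat\alpha$ while sending $\hat\alpha\mapsto-\hat\alpha$. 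Iterating, we write $w$ as a product of the $w_{\hat\alpha}$'s.

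The main obstacle is the inductive step $n(w_{\hat\alpha}w)=n(w)-1$, i.e.\ the claim that $w_{\hat\alpha}$ permutes the set of positive dual roots other than $\hat\alpha$. In a genuine root system this is classical, but here the set $\mathcal R$ of dual roots is not a priori an abstract root system with integrality and closure properties; the only structural facts available are Lemma~\ref{notpar} (no two dual roots are parallel) and the bracket-sum dictionary of Theorem~\ref{rootsums}. So I would need to check that $w_{\hat\alpha}\star\mathcal R = \mathcal R$ (which is Proposition~\ref{permutedualroots} applied to the specific $g=e^{t_0 F}$ of Proposition~\ref{hyper}), that $w_{\hat\alpha}\star\hat\alpha=-\hat\alpha$, and then that for a positive dual root $\hat\beta\neq\hat\alpha$, $w_{\hat\alpha}\star\hat\beta$ is again positive. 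For this last point the clean argument is: $w_{\hat\alpha}\star\hat\beta = \hat\beta - \tfrac{2\langle\hat\beta,\hat\alpha\rangle}{\langle\hat\alpha,\hat\alpha\rangle}\hat\alpha$ differs from $\hat\beta$ only by a multiple of $\hat\alpha$, so $\langle w_{\hat\alpha}\star\hat\beta, X_0\rangle = \langle\hat\beta,X_0\rangle - c\langle\hat\alpha,X_0\rangle$; if this were $\leq 0$ then $w_{\hat\alpha}\star\hat\beta$ would be a \emph{negative} dual root, say $-\hat\gamma$ with $\hat\gamma$ positive, and applying $w_{\hat\alpha}$ again gives $\hat\beta = -w_{\hat\alpha}\star\hat\gamma$; but one shows $w_{\hat\alpha}$ can send at most the single positive dual root $\hat\alpha$ to a negative one when restricted appropriately — more robustly, I would instead perturb $X_0$ slightly so that $\langle\hat\alpha,X_0\rangle$ is enormous compared to all other $|\langle\hat\beta,X_0\rangle|$ is the wrong move; rather, the correct fix is to choose $X_0$ generic so that no nontrivial $\Z$-combination $\sum n_i\hat\alpha_i$ that could arise is orthogonal to $X_0$, and to note that $w_{\hat\alpha}\star\hat\beta+\hat\beta$ is a multiple of... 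In short, I expect to need a short lemma, proved using only Lemma~\ref{notpar} and a sufficiently generic choice of $X_0$, asserting that $w_{\hat\alpha}$ maps positive-dual-roots-other-than-$\hat\alpha$ to positive dual roots; granting that lemma, the length induction closes the proof.
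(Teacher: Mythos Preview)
The paper does not prove this proposition: immediately after stating it, the text says ``We will not prove this proposition.'' So there is no proof to compare against, and I can only assess whether your proposal stands on its own.

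Your proposal has two genuine gaps.

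\textbf{The inductive step is based on a false lemma.} You need that for a positive dual root $\hat\alpha$, the reflection $w_{\hat\alpha}$ permutes the positive dual roots other than $\hat\alpha$. This is false in general: it holds only for \emph{simple} roots (elements of a base), not for arbitrary positive roots. Already in $G=SU(3)$ with positive dual roots $\hat\alpha_{12},\hat\alpha_{23},\hat\alpha_{13}$, the reflection $w_{\hat\alpha_{13}}$ swaps the first and third diagonal entries, sending $\hat\alpha_{12}=H_{12}\mapsto H_{32}=-\hat\alpha_{23}$, which is negative. So your decrement $n(w_{\hat\alpha}w)=n(w)-1$ fails for this choice of $\hat\alpha$, and the induction does not close. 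The usual fix requires first constructing a base $\Delta$, proving the lemma only for simple reflections, and showing these generate the reflection subgroup $W'$; but bases are not introduced in the paper until the following section, and their existence is only cited, not proved.

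\textbf{The base case $n(w)=0\Rightarrow w=1$ is not argued correctly.} From $n(w)=0$ you conclude that $w\star X_0$ has the same inner product with every positive dual root as $X_0$ does, but this is not what $n(w)=0$ says: it only gives $\langle w\star X_0,\hat\alpha\rangle=\langle X_0,w^{-1}\star\hat\alpha\rangle>0$ for each positive $\hat\alpha$, i.e.\ $w\star X_0$ lies in the same open Weyl chamber as $X_0$, not that $w\star X_0=X_0$. Passing from ``same chamber'' to ``equal'' is exactly the simple-transitivity statement that is the heart of the theorem, so this step is circular as written. The subsequent deduction (that a $w$ fixing a strongly regular $X_0$ must be the identity) is fine, but you have not reached its hypothesis.

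In short, the length-function strategy is the right idea, but to make it work you must first build a base and restrict to simple reflections; with only Lemma~\ref{notpar} and Theorem~\ref{rootsums} you do not yet have the combinatorial control the argument needs.
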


We will not prove this proposition.  It implies that $W(G)$ depends only on the Lie algebra.  That is, if two Lie groups have isomorphic Lie algebras, then they have isomorphic Weil groups.  By contrast, the normalizer of the maximal torus of $SO(3)$ is not isomorphic to that of $Sp(1)$, even though $so(3)\cong sp(1)$ (see Exercises 9.12 and 9.13 for descriptions of these normalizers).

It is useful to derive an explicit formula for the reflection through the hyperplane $\hat\alpha^\perp$:
\begin{lem}\label{hyperproj}
If $\hat\alpha$ is a dual root, then $w_{\hat\alpha}\star X = X-2\frac{\lb\hat\alpha,X\rb}{\lb\hat\alpha,\hat\alpha\rb}\hat\alpha$ for all $X\in\tau$.
\end{lem}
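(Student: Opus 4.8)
The plan is to deduce the explicit formula directly from Proposition~\ref{hyper}, which already identifies $w_{\hat\alpha}\star$ as a linear map on $\tau$ that fixes the hyperplane $\hat\alpha^\perp$ pointwise and sends $\hat\alpha$ to $-\hat\alpha$. Since a linear map is determined by its values on a basis, and any basis of $\tau$ adapted to the orthogonal splitting $\tau=\hat\alpha^\perp\oplus\R\hat\alpha$ will do, it suffices to check that the right-hand side of the asserted identity agrees with $w_{\hat\alpha}\star$ on such a decomposition of an arbitrary $X\in\tau$.

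Concretely, I would first record that $X\mapsto w_{\hat\alpha}\star X=\Ad_g X$ is linear, and that $\hat\alpha\neq 0$ because it is the dual root of the nonzero linear function $\alpha$; hence the scalar $c:=\lb\hat\alpha,X\rb/\lb\hat\alpha,\hat\alpha\rb$ is well defined. Given $X\in\tau$, set $X_0:=X-c\,\hat\alpha$. The one-line computation $\lb X_0,\hat\alpha\rb=\lb X,\hat\alpha\rb-c\,\lb\hat\alpha,\hat\alpha\rb=0$ shows $X_0\in\hat\alpha^\perp$, so $X=X_0+c\,\hat\alpha$ is precisely the orthogonal decomposition of $X$ into its component in $\hat\alpha^\perp$ and its component along $\hat\alpha$.

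Now apply $w_{\hat\alpha}\star$, using linearity together with the two properties from Proposition~\ref{hyper}:
\[
w_{\hat\alpha}\star X = w_{\hat\alpha}\star X_0 + c\,(w_{\hat\alpha}\star\hat\alpha) = X_0 - c\,\hat\alpha = X - 2c\,\hat\alpha,
\]
and substituting the value of $c$ yields exactly $X-2\frac{\lb\hat\alpha,X\rb}{\lb\hat\alpha,\hat\alpha\rb}\hat\alpha$. I do not expect a genuine obstacle: the only step that needs a word of justification is $\hat\alpha\neq 0$ (so that the decomposition and the denominator make sense), after which the argument is the familiar reflection-formula computation. If one wishes to avoid even implicitly invoking ``the'' reflection through $\hat\alpha^\perp$, the displayed chain of equalities is itself a complete self-contained verification.
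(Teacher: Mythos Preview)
Your proof is correct and follows essentially the same approach as the paper: both decompose $X$ orthogonally as its component along $\hat\alpha$ plus its component in $\hat\alpha^\perp$, then apply the two defining properties of $w_{\hat\alpha}$ from Proposition~\ref{hyper} to obtain the formula. The paper's version is slightly more terse and omits the remark that $\hat\alpha\neq 0$, but the argument is the same.
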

\begin{proof}
$X$ uniquely decomposes as the sum of a vector parallel to $\hat\alpha$ and a vector perpendicular to $\hat\alpha$ in the following explicit manner:
$$X=X_\parallel+X_\perp = \left(\frac{\lb\hat\alpha,X\rb}{\lb\hat\alpha,\hat\alpha\rb}\hat\alpha\right) + \left(X-\frac{\lb\hat\alpha,X\rb}{\lb\hat\alpha,\hat\alpha\rb}\hat\alpha\right).$$
We have $w_{\hat\alpha}(X) = -X_\parallel + X_\perp = X-2\frac{\lb\hat\alpha,X\rb}{\lb\hat\alpha,\hat\alpha\rb}\hat\alpha$.
\end{proof}
\begin{prop} $W(SU(n))$ is isomorphic to $S_{n}$, the group of all permutations of $n$ objects.
\end{prop}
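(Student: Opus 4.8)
The plan is to identify $W(SU(n)) = N(T)/T$ with $S_n$ by exhibiting a natural action of $N(T)$ on the diagonal entries of elements of $\tau$ by permutation, and then checking this descends to an isomorphism on $W(SU(n))$. Recall that for $SU(n)$, $\tau = \{\diag(\lambda_1\ii,\ldots,\lambda_n\ii) \mid \sum\lambda_k = 0\}$, and the dual roots are the $\hat\alpha_{ij} = H_{ij}$, which correspond (under the identification of $\tau$ with the hyperplane $\{\lambda \in \R^n \mid \sum\lambda_k = 0\}$ via $\diag(\lambda_1\ii,\ldots,\lambda_n\ii)\leftrightarrow(\lambda_1,\ldots,\lambda_n)$) to the vectors $e_i - e_j$.

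First I would show $S_n$ embeds into $W(SU(n))$. For a permutation $\sigma \in S_n$, let $P_\sigma$ be the corresponding permutation matrix; a suitable scalar multiple $g_\sigma := c_\sigma P_\sigma$ (choosing $c_\sigma$ a root of unity so that $\det g_\sigma = 1$) lies in $SU(n)$, and conjugation by $P_\sigma$ sends $\diag(\lambda_1\ii,\ldots,\lambda_n\ii)$ to $\diag(\lambda_{\sigma^{-1}(1)}\ii,\ldots,\lambda_{\sigma^{-1}(n)}\ii)$, which is again in $\tau$. Hence $g_\sigma \in N(T)$, and $\sigma \mapsto g_\sigma T$ gives a homomorphism $S_n \to W(SU(n))$; it is injective because only the identity permutation fixes all of $\tau$ pointwise. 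In fact, using Lemma~\ref{hyperproj}, $w_{\hat\alpha_{ij}}$ acts on $\tau \cong \{\lambda : \sum\lambda_k = 0\}$ as the reflection $\lambda \mapsto \lambda - 2\frac{\langle e_i - e_j, \lambda\rangle}{\langle e_i-e_j,e_i-e_j\rangle}(e_i - e_j)$, which is precisely the transposition $\lambda_i \leftrightarrow \lambda_j$; so the image of $S_n$ contains all the generating reflections $w_{\hat\alpha}$, and by Proposition~\ref{BD} the map $S_n \to W(SU(n))$ is surjective as well.

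The one point requiring a little care — and the main obstacle — is the surjectivity argument: strictly I should verify that every dual root of $SU(n)$ is one of the $\pm\hat\alpha_{ij}$ (this is exactly Proposition~\ref{uniqq} applied to the explicit root-space decomposition of $su(n)$ from Section~1), so that the $w_{\hat\alpha}$ appearing in Proposition~\ref{BD} are exactly the transpositions and nothing more is missed. An alternative, self-contained way to finish (avoiding reliance on Proposition~\ref{BD}) would be a direct counting/structure argument: any $g \in N(T)$ has $\Ad_g$ permuting the dual roots $\{\pm(e_i - e_j)\}$ while preserving the inner product, hence permuting the $n$ ``weight lines'' $\R e_i$ inside $\R^n$ (the lines spanned by the standard basis vectors are intrinsically characterized as, e.g., the directions $v$ with $\langle v, \hat\alpha\rangle \in \{0, \pm c\}$ for the appropriate constant and all dual roots $\hat\alpha$), which yields a permutation $\sigma \in S_n$ with $g_\sigma^{-1} g$ acting trivially on $\tau$, i.e. $g T = g_\sigma T$ in $W(SU(n))$. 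Either route completes the identification $W(SU(n)) \cong S_n$.
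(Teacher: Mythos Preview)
Your proof is correct and takes essentially the same approach as the paper: compute via Lemma~\ref{hyperproj} that $w_{\hat\alpha_{ij}}$ acts on $\tau$ by swapping the $i$th and $j$th diagonal entries, then invoke Proposition~\ref{BD} to conclude these transpositions generate all of $W(SU(n))\cong S_n$. Your explicit embedding via scaled permutation matrices is an unnecessary (though harmless) extra step---the paper dispenses with it since the $w_{\hat\alpha_{ij}}$'s themselves already supply the copy of $S_n$---and your alternative surjectivity argument avoiding Proposition~\ref{BD} is a nice bonus the paper does not include.
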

\begin{proof}
Using Lemma~\ref{hyperproj}, one can check that $w_{\hat\alpha_{ij}}\star X$ is obtained from $X\in\tau$ by exchanging the $i^\text{th}$ and $j^\text{th}$ diagonal entries.  For example,
$$w_{\hat\alpha_{12}}\star\diag(\lambda_1\ii,\lambda_2\ii,\lambda_3\ii,...,\lambda_n\ii) = \diag(\lambda_2\ii,\lambda_1\ii,\lambda_3\ii,...,\lambda_n\ii).$$
The collection $\{w_{\hat\alpha_{ij}}\}$ generates the group, $S_n$,  of all permutations of the $n$ diagonal entries, so Proposition~\ref{BD} implies that $W(SU(n))$ is isomorphic to $S_n$.
\end{proof}

An explicit coset representatives, $g_{ij}\in N(T)\subset SU(n)$, for each $w_{\hat\alpha_{ij}}$ can be found using the construction in the proof of Proposition~\ref{hyper}.  For example, in $G=SU(3)$, we can choose:
$$g_{23}=e^{(\pi/2)E_{23}} = \left(\begin{matrix} 1 & 0 & 0 \\ 0 & 0 & 1  \\ 0 & -1 & 0 \end{matrix}\right),
\text{ or }
g_{23}=e^{(\pi/2)F_{23}} = \left(\begin{matrix} 1 & 0 & 0  \\ 0 & 0 & \ii  \\ 0 & \ii & 0 \end{matrix}\right).
$$

Finally, we will determine the Weil groups of the remaining classical groups.  For each of $G\in\{SO(2n),SO(2n+1),Sp(n)\}$, we previously chose a basis of $\tau$, which in all three cases was denoted $\{H_1,...,H_n\}$.  These basis elements are mutually orthogonal and have the same length, $l$.  They are tangent to the circles which comprise $T$, so they generate one-parameter groups, $t\mapsto e^{t H_i}$, with period $2\pi$.  In fact, $\{\pm H_1,...,\pm H_n\}$ are the only vectors in $\tau$ of length $l$ which generate one-parameter groups with period $2\pi$.  For any $g\in N(T)$, $\Ad_g:\tau\ra\tau$ must preserve this property and therefore must permute the set $\{\pm H_1,...,\pm H_n\}$.  We will think of $W(G)$ as a group of permutations of this set (rather than of the set of dual roots).

\begin{prop} $|W(SO(2n+1))|=|W(Sp(n))| = 2^nn!$, and $|W(SO(2n))|=2^{n-1}n!$
\end{prop}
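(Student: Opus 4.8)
The plan is to realize $W(G)$ explicitly as a group of signed permutations of the orthogonal frame $\{H_1,\dots,H_n\}$, and then count. By the discussion preceding the proposition, for each $g\in N(T)$ the automorphism $\Ad_g\colon\tau\to\tau$ permutes the set $\{\pm H_1,\dots,\pm H_n\}$, so $W(G)$ embeds in the ``hyperoctahedral'' group $B_n$ of all signed permutations of this set. That group has order $2^n n!$, being the semidirect product of $S_n$ with the group $(\Z/2)^n$ of sign changes. It remains to identify, inside $B_n$, exactly which subgroup $W(G)$ equals.

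For this I would invoke Proposition~\ref{BD}, which says $W(G)$ is generated by the reflections $w_{\hat\alpha}$ as $\hat\alpha$ ranges over the dual roots of $G$. Using the explicit formula of Lemma~\ref{hyperproj}, together with the fact that $H_1,\dots,H_n$ are mutually orthogonal of a common length, one computes $w_{\hat\alpha}$ for each dual root; since $w_{\hat\alpha}$ depends only on the line spanned by $\hat\alpha$, the normalizing constants ($\tfrac12$, $2H_i$, and so on) appearing in the dual roots of $so(2n)$, $so(2n+1)$ and $sp(n)$ are irrelevant. One finds three types of reflections: a dual root proportional to $H_i-H_j$ yields the transposition $H_i\leftrightarrow H_j$; a dual root proportional to $H_i+H_j$ yields the ``signed transposition'' $H_i\mapsto-H_j$, $H_j\mapsto-H_i$ (fixing the other $H_k$); and a dual root proportional to $H_i$ yields the single sign change $H_i\mapsto-H_i$.

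Now I assemble the generated subgroups. For $G=SO(2n)$ the dual roots are exactly the $\hat\alpha_{ij}$ (proportional to $H_i-H_j$) and the $\hat\beta_{ij}$ (proportional to $H_i+H_j$). The transpositions alone generate $S_n$, and the composite of a signed transposition with the transposition of the same two indices is the double sign change on those indices. Every signed permutation with an even number of sign changes is a product of a permutation and such double sign changes, and conversely every one of our generators lies in this even-sign subgroup, which is the kernel of the homomorphism $B_n\to\Z/2$ recording the parity of the number of sign changes and hence has index $2$. Therefore $W(SO(2n))$ equals that subgroup, of order $\tfrac12\cdot 2^n n! = 2^{n-1}n!$. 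For $G=SO(2n+1)$ and $G=Sp(n)$ the dual roots additionally include the $\hat\gamma_i$ (proportional to $H_i$), so we also obtain all single sign changes; together with the transpositions these generate all of $B_n$, giving $|W(G)| = 2^n n!$.

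The reflection computations, and the fact that the frame $\{H_i\}$ is orthogonal with equal side lengths, are essentially immediate from the data recorded in the earlier sections and from Lemma~\ref{hyperproj}, so I expect them to be routine. The one point deserving genuine care is the $SO(2n)$ case: one must verify \emph{both} that the group generated by the $w_{\hat\alpha}$ lies inside the even-sign subgroup \emph{and} that it fills it out, so that the answer is $2^{n-1}n!$ and not $2^n n!$. I anticipate that bookkeeping to be the main, though still modest, obstacle.
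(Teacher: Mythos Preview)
Your proposal is correct and follows essentially the same approach as the paper: both use Proposition~\ref{BD} to reduce to the subgroup of signed permutations of $\{H_1,\dots,H_n\}$ generated by the reflections $w_{\hat\alpha}$, compute these reflections via Lemma~\ref{hyperproj}, and then count. Your write-up is in fact a bit more explicit than the paper's in justifying why the $SO(2n)$ case yields exactly the even-sign subgroup (the paper simply asks the reader to ``check that this is the only restriction''), but the underlying argument is the same.
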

\begin{proof}
For each of $G\in\{SO(2n),SO(2n+1),Sp(n)\}$, there are dual roots denoted $\hat\alpha_{ij}$ and $\hat\beta_{ij}$.  Using Proposition~\ref{hyperproj}, the corresponding Weil group elements permute the set $\{\pm H_1,...,\pm H_n\}$ as follows:
\begin{gather*}
w_{\hat\alpha_{ij}} \text{ sends }H_i\mapsto H_j,\,\,\,\, H_j\mapsto H_i,\,\,\,\, H_k\mapsto H_k\text{ for all }k\notin\{i,j\},\\
w_{\hat\beta_{ij}}\text{ sends }H_i\mapsto -H_j,\,\,\,\, H_j\mapsto-H_i,\,\,\,\,  H_k\mapsto H_k\text{ for all }k\notin\{i,j\}.
\end{gather*}
For $G\in\{SO(2n+1),Sp(n)\}$ we additionally have dual roots denoted $\{\hat\gamma_i\}$ which give the following permutations:
$$w_{\hat\gamma_i}\text{ sends }H_i\mapsto -H_i,\,\,\,\,H_k\mapsto H_k\text{ for all }k\neq i.$$
By Proposition~\ref{BD}, the Weil group is isomorphic to the group of permutations of set $\{\pm H_1,...,\pm H_n\}$ generated by the above permutations.  For $G\in\{SO(2n+1),Sp(n)\}$, one can generate any permutation of the $n$ indices together with any designation of which of the $n$ indices become negative, giving $2^nn!$ possibilities.  For $G=SO(2n)$, the number of negative indices must be even (check that this is the only restriction), so there are half as many total possibilities.
\end{proof}
\section{Towards the classification theorem}
In this section, we very roughly indicate the proof of the previously-mentioned classification theorem for compact Lie groups, which stated:
\begin{theorem}\label{classifyagain}The Lie algebra of every compact Lie group, $G$, is isomorphic to the Lie algebra of a product $G_1\times G_2\times\cdots\times G_k$, where each $G_i$ is one of $\{SO(n), SU(n), Sp(n)\}$ for some $n$, or is one of the five exceptional Lie groups: $G_2, F_4, E_6, E_7$ and $E_8$.
\end{theorem}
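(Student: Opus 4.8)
The plan is to prove the theorem in three stages: reduce to the case where $\mg$ is \emph{simple} (non-abelian with no proper nonzero ideals); encode a simple $\mg$ by a combinatorial object built from its dual roots; and classify the possible such objects. For the reduction, I would first use the $\Ad$-invariant inner product to observe that the orthogonal complement of an ideal is again an ideal; iterating, $\mg$ splits as an orthogonal direct sum $\mg=\mathfrak{z}\oplus\mg_1\oplus\cdots\oplus\mg_k$ of its center $\mathfrak{z}$ and simple ideals $\mg_i$. The center $\mathfrak{z}$ is the Lie algebra of a torus, i.e.\ of a product of copies of $SO(2)$, so it suffices to show each simple $\mg_i$ is isomorphic to the Lie algebra of one of the listed groups.

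Next I would attach a root system to a fixed simple factor $\mg_i$, with maximal torus algebra $\tau_i$ and dual roots $\{\pm\hat\alpha_1,\dots,\pm\hat\alpha_m\}\subset\tau_i$. These span $\tau_i$, since $\mg_i$ being simple has trivial center (compare the proof of Proposition~\ref{rootsspan}). By Theorem~\ref{rootsums} and Table~\ref{genijk}, the bracket pattern of $\mg_i$ is controlled by which sums and differences of dual roots are again dual roots, and in fact $\mg_i$ is determined up to isomorphism by the geometric configuration of its dual roots inside the Euclidean space $\tau_i$ — this last point is the reconstruction theorem quoted in the final stage. I would then verify that this configuration is an abstract \emph{root system}: it is finite, spans $\tau_i$, is stable under the reflections $w_{\hat\alpha}$ of Proposition~\ref{BD} (explicitly, Lemma~\ref{hyperproj}), and satisfies the crystallographic condition $2\lb\hat\alpha,\hat\beta\rb/\lb\hat\alpha,\hat\alpha\rb\in\Z$ for all dual roots $\hat\alpha,\hat\beta$. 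Simplicity of $\mg_i$ makes this root system \emph{irreducible} — it is not an orthogonal union of two nonempty root subsystems — since otherwise $\mg_i$ itself would split.

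Finally I would classify the irreducible root systems: choose a system of simple roots, form the Dynkin diagram recording pairwise angles and length ratios, and run the classical combinatorial argument — bounding the angles between simple roots, forbidding cycles, bounding vertex degrees, and eliminating all but finitely many branching and multiple-bond patterns — to conclude that the connected Dynkin diagrams are precisely the families $A_n$, $B_n$, $C_n$, $D_n$ and the five exceptions $G_2$, $F_4$, $E_6$, $E_7$, $E_8$. The earlier sections exhibit the root systems of $SU(n)$, $SO(2n+1)$, $Sp(n)$, $SO(2n)$, which are of types $A_{n-1}$, $B_n$, $C_n$, $D_n$. For each of the five exceptional diagrams one exhibits an explicit root system, and the reconstruction theorem — a compact simple Lie algebra is determined up to isomorphism by its Dynkin diagram, and every diagram so arises — identifies the resulting algebras as those of $G_2$, $F_4$, $E_6$, $E_7$, $E_8$. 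Combining the three stages, each $\mg_i$, and therefore $\mg$, is isomorphic to the Lie algebra of a product of the advertised form.

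The hard part is this last stage, in two respects that lie outside the machinery built here. The crystallographic integrality condition generalizes Lemma~\ref{notpar} and, like it, genuinely requires representation theory; and the reconstruction theorem, which packages the Serre relations, is the single deepest ingredient. By contrast, the combinatorial classification of Dynkin diagrams is elementary, though an intricate finite case analysis, and the identification of the classical groups' root systems has already been carried out in the preceding sections.
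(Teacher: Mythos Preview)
Your outline is correct and matches the paper's own sketch closely: both reduce to the finite-center case, show the dual roots form a root system, invoke the classification of root systems via Dynkin diagrams, and appeal to a reconstruction theorem (the paper's Theorems~\ref{hard1} and~\ref{hard2}) for the hard steps. The one organizational difference is that you first split $\mg$ as $\mathfrak{z}\oplus\mg_1\oplus\cdots\oplus\mg_k$ into center plus simple ideals and then classify each irreducible root system separately, whereas the paper keeps $\mg$ whole, assumes finite center, and lets the product structure emerge from the classification of arbitrary root systems (Theorem~\ref{hard1}); these are equivalent packagings of the same argument, and the paper, like you, defers the integrality condition and the reconstruction step to outside sources.
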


It suffices to prove this theorem assuming that $G$ has a finite center, so we'll henceforth assume that all of our compact Lie groups have finite centers.  In this case, the dual roots of $G$ are a finite collection of vectors in $\tau$ which form a ``root system'' according to the following definition:
\begin{defn} Let $\tau$ be a real vector space which has an inner product, $\lb\cdot,\cdot\rb$.  Let $R$ be a finite collection of nonzero vectors in $\tau$ which spans $\tau$.  The pair $(\tau,R)$ is called a \underline{root system} if the following properties are satisfied:
\begin{enumerate}
\item If $\alpha\in R$, then $-\alpha\in R$, but no other multiple of $\alpha$ is in $R$.
\item If $\alpha,\beta\in R$, then $w_\alpha\star\beta:=\beta-2\frac{\lb\beta,\alpha\rb}{\lb\alpha,\alpha\rb}\alpha\in R$.
\item If $\alpha,\beta\in R$, then the quantity $2\frac{\lb\beta,\alpha\rb}{\lb\alpha,\alpha\rb}$ is an integer.
\end{enumerate}
In this case, the elements of $R$ are called \underline{roots}, and the dimension of $\tau$ is called the \underline{rank} of the root system.
\end{defn}

In property (2), each $a\in\tau$ determines the orthogonal endomorphism of $\tau$ which sends $X\in\tau$ to the vector $w_a\star X\in\tau$ defined as $w_a\star X := X-2\frac{\lb X,a\rb}{\lb a,a\rb}a$.  That is, $w_a:\tau\ra\tau$ is the reflection through the hyperplane $a^\perp$.  Property (2) says that for each root $\alpha$, the reflection $w_\alpha$ sends roots to roots.  The \underline{Weil group} of $(\tau,R)$, denoted $W(\tau,R)$, is defined as the group of all endomorphisms of $\tau$ obtained by composing a finite number of the $w_\alpha$'s.  As before, $W(\tau,R)$ is isomorphic to a subgroup of the group of permutations of $R$.

Some representation theory is required to prove that the dual roots of $G$ satisfy property (3).  Interpreting $\frac{\lb\beta,\alpha\rb}{\lb\alpha,\alpha\rb}$ as in the proof of Lemma~\ref{hyperproj}, property (3) says that the projection of $\beta$ onto $\alpha$ (previously denoted $\beta_\parallel$) must be an integer or half-integer multiple of $\alpha$, and vice-versa.  This implies very strong restrictions on the angle $\angle(\alpha,\beta)$ and on the ratio $\frac{|\alpha|}{|\beta|}$.  In particular, the following is straightforward to prove using only property (3):
\begin{prop}\label{anglechoices}
Let $(\tau,R)$ be a root system.  If $\alpha,\beta\in R$, then one of the following holds:
\begin{enumerate}
\item[(0)] $\lb \alpha,\beta\rb =0$.
\item[(1)] $|\alpha|=|\beta|$ and $\angle(\alpha,\beta)\in\{60^\circ,120^\circ\}$.
\item[(2)] $\max\{|\alpha|,|\beta|\}=\sqrt{2}\cdot\min\{|\alpha|,|\beta|\}$ and $\angle(\alpha,\beta)\in\{45^\circ,135^\circ\}$.
\item[(3)] $\max\{|\alpha|,|\beta|\}=\sqrt{3}\cdot\min\{|\alpha|,|\beta|\}$ and $\angle(\alpha,\beta)\in\{30^\circ,150^\circ\}$.
\end{enumerate}
\end{prop}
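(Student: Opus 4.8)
The plan is to reduce the entire statement to the single integrality condition, property~(3). Fix $\alpha,\beta\in R$ and set $\theta:=\angle(\alpha,\beta)$. Write
$$n_{\alpha\beta}:=2\frac{\lb\beta,\alpha\rb}{\lb\alpha,\alpha\rb},\qquad n_{\beta\alpha}:=2\frac{\lb\alpha,\beta\rb}{\lb\beta,\beta\rb},$$
both of which are integers by property~(3). The key observation is that their product clears the denominators:
$$n_{\alpha\beta}\cdot n_{\beta\alpha}=4\,\frac{\lb\alpha,\beta\rb^{2}}{\lb\alpha,\alpha\rb\,\lb\beta,\beta\rb}=4\cos^{2}\theta\in[0,4],$$
so $n_{\alpha\beta}n_{\beta\alpha}$ is a non-negative integer that is at most $4$. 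I would also record two elementary facts for later use: $n_{\alpha\beta}$ and $n_{\beta\alpha}$ share the sign of $\lb\alpha,\beta\rb$, and whenever both are nonzero their ratio is $n_{\alpha\beta}/n_{\beta\alpha}=|\beta|^{2}/|\alpha|^{2}$.

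Next I would dispose of the boundary value: $n_{\alpha\beta}n_{\beta\alpha}=4$ forces $\cos^{2}\theta=1$, hence $\alpha$ and $\beta$ are proportional, hence $\beta=\pm\alpha$ by property~(1); in that situation none of (0)--(3) is being asserted, so the proposition should be read as tacitly assuming $\beta\neq\pm\alpha$, and I would say so. That leaves the four cases $n_{\alpha\beta}n_{\beta\alpha}\in\{0,1,2,3\}$, which I would run through directly. If the product is $0$, then $\cos\theta=0$, giving~(0). If it is $1$, then necessarily $|n_{\alpha\beta}|=|n_{\beta\alpha}|=1$, so $|\alpha|=|\beta|$ and $\cos\theta=\pm\tfrac12$, i.e. $\theta\in\{60^{\circ},120^{\circ}\}$, giving~(1). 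If it is $2$, then necessarily $\{|n_{\alpha\beta}|,|n_{\beta\alpha}|\}=\{1,2\}$, so $|\alpha|^{2}/|\beta|^{2}\in\{2,\tfrac12\}$ (hence $\max\{|\alpha|,|\beta|\}=\sqrt2\min\{|\alpha|,|\beta|\}$) and $\cos^{2}\theta=\tfrac12$, i.e. $\theta\in\{45^{\circ},135^{\circ}\}$, giving~(2). If it is $3$, then necessarily $\{|n_{\alpha\beta}|,|n_{\beta\alpha}|\}=\{1,3\}$, so $|\alpha|^{2}/|\beta|^{2}\in\{3,\tfrac13\}$ and $\cos^{2}\theta=\tfrac34$, i.e. $\theta\in\{30^{\circ},150^{\circ}\}$, giving~(3). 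In each nonzero case, the sign of $\lb\alpha,\beta\rb$ is exactly what selects between the two supplementary angles listed.

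I do not expect a genuine obstacle here: once the identity $n_{\alpha\beta}n_{\beta\alpha}=4\cos^{2}\theta$ is in hand, everything is forced by the scarcity of ways to factor an integer in $[0,4]$ as a product of two integers of equal sign. The one point that needs a word of care is the extreme value $4\cos^{2}\theta=4$, which is precisely where property~(1) (that $R$ contains no scalar multiple of $\alpha$ besides $-\alpha$) must be invoked, together with the convention that $\beta\neq\pm\alpha$; everything else is bookkeeping of signs and length ratios.
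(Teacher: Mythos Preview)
Your argument is correct and is exactly the standard one the paper has in mind: the text does not actually write out a proof but merely says the proposition ``is straightforward to prove using only property~(3),'' and the computation $n_{\alpha\beta}n_{\beta\alpha}=4\cos^2\theta\in\{0,1,2,3,4\}$ followed by the short case split is precisely that straightforward argument. Your remark that the case $4\cos^2\theta=4$ must be excluded via property~(1) and an implicit hypothesis $\beta\neq\pm\alpha$ is a fair observation about how the proposition is stated.
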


In fact, the definition of a root system is so restrictive, root systems have been completely classified:
\begin{theorem}\label{hard1}Every root system is equivalent to the system of dual roots for a Lie group of the form $G=G_1\times G_2\times\cdots\times G_k$, where each $G_i$ is one of $\{SO(n), SU(n), Sp(n)\}$ for some $n$, or is one of the five exceptional Lie groups.
\end{theorem}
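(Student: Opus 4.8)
The plan is to run the classical argument through simple roots and Dynkin diagrams. First I would reduce to the \emph{irreducible} case. Call $(\tau,R)$ \emph{reducible} if $\tau$ splits as an orthogonal direct sum $\tau=\tau_1\oplus\tau_2$ with each $\tau_i\neq\{0\}$ spanned by $R\cap\tau_i$; using properties (1)--(3), one checks that every root system decomposes in an essentially unique way as an orthogonal direct sum of irreducible root systems, and that the orthogonal direct sum of the dual root systems of two compact groups is the dual root system of their product. Hence it suffices to match each irreducible root system with the dual roots of a single classical or exceptional group.

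Next, for an irreducible $(\tau,R)$ I would choose a \emph{base} of simple roots. Pick $v\in\tau$ with $\lb v,\alpha\rb\neq 0$ for every $\alpha\in R$, set $R^+:=\{\alpha\in R\mid\lb v,\alpha\rb>0\}$, and let $\Delta$ be the set of those $\alpha\in R^+$ that cannot be written as a sum of two elements of $R^+$. One proves that $\Delta$ is a basis of $\tau$, that every root is a non-negative or a non-positive integer combination of $\Delta$, and that distinct simple roots meet at non-acute angles, so Proposition~\ref{anglechoices} confines each pair of simple roots to the angles $90^\circ,120^\circ,135^\circ,150^\circ$ with the stated length ratios. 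Record this in the \emph{Dynkin diagram}: one node per element of $\Delta$, with $0$, $1$, $2$, or $3$ bonds joining two nodes according to which case of Proposition~\ref{anglechoices} applies, and an arrow on a multiple bond pointing from the longer root to the shorter. Irreducibility of $R$ is equivalent to connectedness of the diagram.

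The hard part will be the purely combinatorial classification of the connected admissible diagrams. The essential input is that the Gram matrix of the normalized simple roots is positive definite, being the matrix of an inner product on $\tau$. One translates positive definiteness into successive restrictions on the diagram --- no subdiagram is a cycle, no node has more than three emanating bonds, there is at most one branch node and at most one multiple bond, a simple chain of nodes may be contracted without destroying admissibility, and a short list of borderline configurations is eliminated by explicit determinant computations --- and concludes that the connected admissible diagrams are exactly the four families $A_n$ ($n\geq 1$), $B_n$ ($n\geq 2$), $C_n$ ($n\geq 3$), $D_n$ ($n\geq 4$), together with the five diagrams $G_2, F_4, E_6, E_7, E_8$.

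Finally I would check that the diagram determines the root system up to equivalence: from a base $\Delta$, the full set $R$ is recovered as the orbit of $\Delta$ under the Weil group $W(\tau,R)$, which is generated by the reflections $w_\alpha$ for $\alpha\in\Delta$, so two irreducible root systems with the same diagram are equivalent. It then remains only to realize each admissible diagram. The explicit dual roots computed in the earlier sections show, by reading off the angles and length ratios between them, that $SU(n)$, $SO(2n+1)$, $Sp(n)$, $SO(2n)$ have dual root systems of types $A_{n-1}$, $B_n$, $C_n$, $D_n$ respectively, while the five exceptional groups realize $G_2, F_4, E_6, E_7, E_8$ by construction. Combining this with the reduction to irreducible systems yields the theorem.
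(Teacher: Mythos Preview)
The paper does not actually prove this theorem: it explicitly says ``The proofs of Theorems~\ref{hard1} and~\ref{hard2} are difficult; see~[9] for complete details,'' and then only outlines the approach by introducing bases, proving Lemma~\ref{lobtuse} on the angles between base elements, and describing Dynkin diagrams, asserting without proof that the Dynkin diagram determines the equivalence class of the root system and that the classification of root systems was achieved by classifying all possible Dynkin diagrams. Your proposal follows precisely this outlined route---reduction to irreducibles, construction of a base, the Dynkin diagram, the combinatorial classification of connected diagrams, and realization by the classical and exceptional groups---so it is consistent with, and more detailed than, what the paper itself provides; there is no alternative proof in the paper to compare against.
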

In order for this classification of root systems to yield a proof of Theorem~\ref{classifyagain}, it remains only to establish that:
\begin{theorem}\label{hard2}
Two compact Lie groups with equivalent systems of dual roots must have isomorphic Lie algebras.
\end{theorem}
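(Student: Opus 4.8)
Let $G,G'$ be the two compact Lie groups, with Lie algebras $\mg,\mg'$ and with maximal tori having Lie algebras $\tau,\tau'$ respectively. The plan is to manufacture an explicit Lie algebra isomorphism $\phi\colon\mg\to\mg'$ directly from the hypothesized equivalence of root systems, exploiting the fact — assembled piecemeal in the previous sections — that the roots and dual roots determine every bracket in $\mg$. Since a compact Lie group with finite center has a semisimple Lie algebra, and the $\Ad$-invariant inner product on each simple summand is unique up to a positive scalar while rescaling it does not alter the bracket, we may assume the equivalence is realized by a linear isometry $L\colon\tau\to\tau'$ carrying the dual roots of $G$ bijectively onto those of $G'$. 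By Lemma~\ref{notpar} and Proposition~\ref{uniqq}, the root spaces of $G$ correspond bijectively to the pairs $\{\pm\hat\alpha\}$ of dual roots, and similarly for $G'$, so $L$ induces — after relabeling — a bijection $\ml_i\leftrightarrow\ml_i'$ with $L(\hat\alpha_i)=\pm\hat\alpha_i'$. Orienting each orthonormal basis $\{E_i,F_i\}$ of $\ml_i$ so that $[E_i,F_i]=\hat\alpha_i$ (and likewise in $\mg'$), we may arrange $L(\hat\alpha_i)=\hat\alpha_i'$, and then define $\phi$ by $\phi|_\tau:=L$, $\phi(E_i):=E_i'$, $\phi(F_i):=F_i'$.

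It then remains to verify $\phi[A,B]=[\phi A,\phi B]$ on pairs of basis vectors. The case $A,B\in\tau$ is trivial. For $X\in\tau$ and $V\in\ml_i$ it follows from Proposition~\ref{EiFi}, once one notes $\alpha_i(X)=\lb\hat\alpha_i,X\rb=\lb L\hat\alpha_i,LX\rb=\alpha_i'(\phi X)$ because $L$ is an isometry and $\phi\hat\alpha_i=\hat\alpha_i'$. For $V,W\in\ml_i$ it reduces to $\phi[E_i,F_i]=\phi\hat\alpha_i=\hat\alpha_i'=[E_i',F_i']$. All of these are routine; the real content is the last family, $[V,W]$ with $V\in\ml_i$, $W\in\ml_j$, $i\neq j$.

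For that family, Theorem~\ref{rootsums} gives $[\ml_i,\ml_j]\subset\ml_{ij}^+\oplus\ml_{ij}^-$, and Table~\ref{genijk} shows that the entire bilinear map $\ml_i\times\ml_j\to\mg$ is determined by the single vector $[E_i,E_j]=A^++A^-$, whose summands lie in the root spaces labelled by $\hat\alpha_i+\hat\alpha_j$ and $\hat\alpha_i-\hat\alpha_j$ — root spaces that $\phi$ already matches correctly with their counterparts in $\mg'$. So multiplicativity of $\phi$ on this family comes down to two sub-claims: that the \emph{lengths} $|A^+|$ and $|A^-|$ depend only on the root system, and that the \emph{signs} of these structure constants can be made to agree in $\mg$ and $\mg'$ simultaneously over every pair $(i,j)$. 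The length claim I would establish by the standard root-string argument: $\{E_i,F_i,\hat\alpha_i\}$ spans a subalgebra isomorphic to $su(2)$, under whose adjoint action $\ml_j$ sits inside a finite-dimensional subrepresentation of $\mg$ whose weights are read off from the $\hat\alpha_i$-strings of dual roots through $\hat\alpha_j$, and feeding those weights into the Jacobi identity together with the infinitesimal $\Ad$-invariance $\lb[A,B],C\rb=-\lb[A,C],B\rb$ pins down $|A^\pm|$ in terms of the geometry of $R$ alone — the real-form analogue of the classical computation of Chevalley structure constants.

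The sign normalization is the step I expect to be the main obstacle, because demanding $[E_i,F_i]=\hat\alpha_i$ has already consumed all the freedom in $\ml_i$ except a simultaneous flip $(E_i,F_i)\mapsto(-E_i,-F_i)$, and the constants $A^\pm$ over different pairs $(i,j)$ are linked through the Jacobi identity, so their signs cannot be chosen independently. The way through is the (non-elementary) construction of a \emph{Chevalley basis}: fix a system of positive and simple dual roots, transported between $G$ and $G'$ by $L$, and normalize the signs of the $A^\pm$ inductively on the height of $\hat\alpha_i+\hat\alpha_j$; one then argues that the isomorphism type of the resulting Lie algebra is independent of all remaining choices, so that the identically-normalized $\mg$ and $\mg'$ have bases with the same structure constants and $\phi$ — post-composed with an automorphism of $\mg'$ if necessary — is a genuine Lie algebra isomorphism. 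The two imported facts, the $su(2)$-string lengths and the existence and essential uniqueness of the sign normalization, are precisely where the argument leaves the elementary framework of this book, which is why this section only promises a rough indication.
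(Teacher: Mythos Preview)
The paper does not prove this theorem; immediately after stating it, the text declares that the proofs of Theorems~\ref{hard1} and~\ref{hard2} are difficult and defers to an outside reference for complete details. So there is no in-text argument to compare your proposal against, and your sketch already goes well beyond what the paper attempts.

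That said, your outline is a reasonable approximation of the standard approach via Chevalley-type normalizations, and you correctly identify the two non-elementary ingredients (the root-string computation of $|A^\pm|$ and the coherent sign normalization). One correction: you assert that after imposing $[E_i,F_i]=\hat\alpha_i$ the only remaining freedom in $\ml_i$ is the flip $(E_i,F_i)\mapsto(-E_i,-F_i)$, but Exercise~\ref{bellpeppers} shows that every $SO(2)$-rotation of the ordered basis preserves $[E_i,F_i]$, so there is in fact a full circle of freedom per root space. This extra freedom only helps the matching, but it does mean the ``sign normalization'' you describe is really a \emph{phase} normalization in the real-root-space picture --- one reason the standard treatments pass to the complexification $\mg_\C$, where each complex root space $\mg_{\overline\alpha}$ is one-dimensional and the Chevalley-basis argument runs in its textbook form.
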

The notion of equivalence in the previous two theorems is formalized as follows:
\begin{defn}
The root system $(\tau,R)$ is said to be \underline{equivalent} to the root system $(\tau',R')$ if there exists a linear isomorphism $f:\tau\ra\tau'$ which sends $R$ onto $R'$ such that for all $\alpha\in R$ and $X\in\tau$ we have:
$$f(w_\alpha\star X) = w_{f(\alpha)}\star f(X).$$
\end{defn}
If $f$ is orthogonal (meaning that $\lb f(X),f(Y)\rb = \lb X,Y\rb$ for all $X,Y\in\tau$), then the hyperplane-reflection property in this definition is automatic.  An example of a non-orthogonal equivalence is given in Exercise~\ref{subrootsystem}.

The proofs of Theorems~\ref{hard1} and~\ref{hard2} are difficult; see~[9] for complete details. One of the key steps in the proof of Theorem~\ref{hard1} involves showing that every root system contains a special type of basis called a ``base,'' defined as follows:

\begin{defn} Let $(\tau,R)$ be a root system, and let $\Delta\subset R$ be a collection of the roots which forms a basis of $\tau$, which implies that every $\alpha\in R$ can be written uniquely as a linear combination of elements of $\Delta$.  We call $\Delta$ a \underline{base} of $R$ if the non-zero coefficients in each such linear combination are integers and are either all positive (in which case $\alpha$ is called a \underline{positive root}) or all negative (in which case $\alpha$ is called a \underline{negative root}).
\end{defn}

For a proof that every root system has a base, see~\cite{Hall} or~\cite{Hel}.  The most natural base for the system of dual roots of $G=SU(n)$ is: $$\Delta=\{\hat\alpha_{12},\hat\alpha_{23},...,\hat\alpha_{(n-1)n}\}.$$  This base induces the same notion of ``positive'' that was provided in Section~1; namely, $\hat\alpha_{ij}$ is positive if and only if $i<j$.  For example, $\hat\alpha_{25}$ is positive because $\hat\alpha_{25}=\hat\alpha_{23}+\hat\alpha_{34}+\hat\alpha_{45}$.

A natural base for the system of dual roots of $G=SO(2n)$ is:
$$\Delta=\{\hat\alpha_{12},\hat\alpha_{23},...,\hat\alpha_{(n-1)n},\hat\beta_{(n-1)n}\}.$$
As before, $\hat\alpha_{ij}$ is positive if and only if $i<j$.  Also, $\hat\beta_{ij}$ is positive and $-\hat\beta_{ij}$ is negative for each pair $(i,j)$.  For example, when $n=8$, so $G=SO(16)$, we can verify that $\hat\beta_{35}$ is positive by writing:
$$\hat\beta_{35}=\hat\alpha_{34}+\hat\alpha_{45}+2\hat\alpha_{56}+2\hat\alpha_{67}+\hat\alpha_{78}+\hat\beta_{78}.$$

A base for the system of dual roots of $G=SO(2n+1)$ is:
$$\Delta=\{\hat\alpha_{12},\hat\alpha_{23},...,\hat\alpha_{(n-1)n},\hat\gamma_n\}.$$
As before, $\hat\alpha_{ij}$ is positive if and only if $i<j$, each $\hat\beta_{ij}$ is positive, and each $-\hat\beta_{ij}$ is negative.  Further, $\hat\gamma_i$ is positive and $-\hat\gamma_i$ is negative for each index $1\leq i\leq n$.  For example, when $n=8$, so $G=SO(17)$, we can verify that $\hat\beta_{35}$ and $\hat\gamma_3$ are positive by writing:
\begin{gather*}\hat\beta_{35}=\hat\alpha_{34}+\hat\alpha_{45}+2\hat\alpha_{56}+2\hat\alpha_{67}+2\hat\alpha_{78}+2\hat\gamma_{8},\\
\hat\gamma_3 = \hat\alpha_{34}+\hat\alpha_{45}+\hat\alpha_{56}+\hat\alpha_{67}+\hat\alpha_{78}+\hat\gamma_{8}.\end{gather*}

A base for the system of dual roots of $G=Sp(n)$ is:
$$\Delta=\{\hat\alpha_{12},\hat\alpha_{23},...,\hat\alpha_{(n-1)n},\hat\gamma_n\}.$$
As before, $\hat\alpha_{ij}$ is positive if and only if $i<j$, each $\hat\beta_{ij}$ and each $\hat\gamma_i$ is positive, and each $-\hat\beta_{ij}$ and each $-\hat\gamma_i$ is negative.  For example, in $G=Sp(8)$, we can verify that $\hat\beta_{35}$ and $\hat\gamma_3$ are positive by writing:
\begin{gather*}\hat\beta_{35}=\hat\alpha_{34}+\hat\alpha_{45}+2\hat\alpha_{56}+2\hat\alpha_{67}+2\hat\alpha_{78}+\hat\gamma_{8},\\
\hat\gamma_3 = 2\hat\alpha_{34}+2\hat\alpha_{45}+2\hat\alpha_{56}+2\hat\alpha_{67}+2\hat\alpha_{78}+\hat\gamma_{8}.\end{gather*}

For a compact Lie group, $G$, each root space, $\ml$, is associated with two dual roots.  A base, $\Delta$, will designate one of them as positive (denoted $\hat\alpha$) and the other as negative (denoted $-\hat\alpha$).  Therefore, a base provides a notion of ``clockwise'' for each $\ml$; namely, clockwise with respect to an ordered orthonormal basis $\{E,F\}$ of $\ml$ such that $[E,F]$ equals the positive dual root.  None of the above bases for the classical groups are unique, which reflects the lack of a canonical notion of clockwise for the individual root spaces.  A different base would induce a different division of the roots into positive and negative roots, and thus different notions of clockwise for the root spaces.

\begin{lem}\label{lobtuse} If $(\tau,R)$ is a root system, $\Delta$ is a base, and $\alpha,\beta\in\Delta$, then one of the following holds:
\begin{enumerate}
\item[(0)] $\lb \alpha,\beta\rb = 0$.
\item[(1)] $|\alpha|=|\beta|$ and $\angle(\alpha,\beta)=120^\circ$.
\item[(2)] $\text{max}\{|\alpha|,|\beta|\}=\sqrt{2}\cdot\text{min}\{|\alpha|,|\beta|\}$, and $\angle(\alpha,\beta)=135^\circ$.
\item[(3)] $\text{max}\{|\alpha|,|\beta|\}=\sqrt{3}\cdot\text{min}\{|\alpha|,|\beta|\}$, and $\angle(\alpha,\beta)=150^\circ$.
\end{enumerate}
\end{lem}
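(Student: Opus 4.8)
The plan is to sharpen Proposition~\ref{anglechoices} by excluding, in each of its three non-orthogonal alternatives, the \emph{acute} angle. Since $\alpha$ and $\beta$ are distinct members of a basis of $\tau$, they are linearly independent, hence not parallel, so Proposition~\ref{anglechoices} applies verbatim. It therefore suffices to prove that $\langle\alpha,\beta\rangle\le 0$: this rules out $\angle(\alpha,\beta)\in\{30^\circ,45^\circ,60^\circ\}$ and leaves exactly cases (0), (1), (2), (3).

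The step doing all the work is an auxiliary fact about root systems: \emph{if $\gamma,\delta\in R$ are not parallel and $\langle\gamma,\delta\rangle>0$, then $\gamma-\delta\in R$}. To establish it I would set $p:=2\frac{\langle\gamma,\delta\rangle}{\langle\delta,\delta\rangle}$ and $q:=2\frac{\langle\delta,\gamma\rangle}{\langle\gamma,\gamma\rangle}$, which are positive integers by property~(3), and note $pq=4\cos^2\angle(\gamma,\delta)<4$ since $\gamma,\delta$ are not parallel. Hence $p=1$ or $q=1$. If $q=1$, then $w_\gamma\star\delta=\delta-\gamma\in R$ by property~(2), so $\gamma-\delta=-(\delta-\gamma)\in R$ by property~(1); if instead $p=1$, then $w_\delta\star\gamma=\gamma-\delta\in R$ directly. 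Either way $\gamma-\delta\in R$.

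With this in hand, the conclusion is quick: suppose for contradiction $\langle\alpha,\beta\rangle>0$; then $\eta:=\alpha-\beta\in R$ by the auxiliary fact. Writing $\eta$ in the basis $\Delta$, its coefficient of $\alpha$ is $+1$ and its coefficient of $\beta$ is $-1$ (all others $0$), so $\eta$ has both a strictly positive and a strictly negative coefficient — contradicting the defining property of a base, which forces a root's nonzero coefficients to share a single sign. Hence $\langle\alpha,\beta\rangle\le 0$, and feeding this back into Proposition~\ref{anglechoices} yields precisely the four listed alternatives.

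The main obstacle is really just the auxiliary fact, and within it the one point that needs care is the strict inequality $pq<4$: this is exactly where non-parallelism of $\alpha$ and $\beta$ is used (without it, $p=q=2$ would be allowed and the argument would collapse). Everything else is bookkeeping with the three root-system axioms and the definition of a base, so I would keep that part terse.
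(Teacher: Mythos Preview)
Your proof is correct and follows essentially the same route as the paper: reduce via Proposition~\ref{anglechoices} to ruling out the acute angles, then show that an acute angle would force $\alpha-\beta\in R$, contradicting the sign condition in the definition of a base. The paper handles the middle step by saying that for each of the three acute angles one checks $w_\alpha\star\beta=\alpha-\beta$ or $w_\beta\star\alpha=\beta-\alpha$; your auxiliary fact with $pq=4\cos^2\theta<4$ forcing $p=1$ or $q=1$ is exactly the uniform way to carry out that check, so you have simply made explicit what the paper calls ``straightforward.''
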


\begin{proof} By Proposition~\ref{anglechoices}, we need only prove that $\angle(\alpha,\beta)$ is not acute.  For each of the three possible acute angles, it is straightforward to show that either $w_{\alpha}\star\beta = \alpha-\beta$ or $w_{\beta}\star\alpha = \beta-\alpha$.  In either case, $\alpha-\beta\in R$ is a root whose unique expression as a linear combination of elements from $\Delta$ has a positive and a negative coefficient, contradicting the definition of base.
\end{proof}

It turns out that to determine the equivalence class of a root system $(\tau,R)$, one only needs to know the angles between pairs of vectors from a base, $\Delta$, of the root system.  A \underline{Dynkin diagram} is a graph which encodes exactly this information.  The nodes of the Dynkin diagram are the elements of $\Delta$ (so the number of nodes equals the rank of the root system).  For a pair of nodes representing elements $\alpha,\beta\in\Delta$, we put 0, 1, 2, or 3 edges between them to represent the possibilities enumerated in Lemma~\ref{lobtuse}.  Further, we decorate each double or triple edge with an arrow from the vertex associated with the longer root towards the vertex associated with the smaller root.
It can be proven that the Dynkin diagram does not depend on the choice of base, and that it determines the equivalence class of the root system.  The classification of root systems was achieved by classifying all possible Dynkin diagrams. The Dynkin diagrams for systems of dual roots of the classical groups are pictured in Figure~\ref{F:dynkin}.
\begin{figure}[h!]
   \scalebox{.35}{\includegraphics{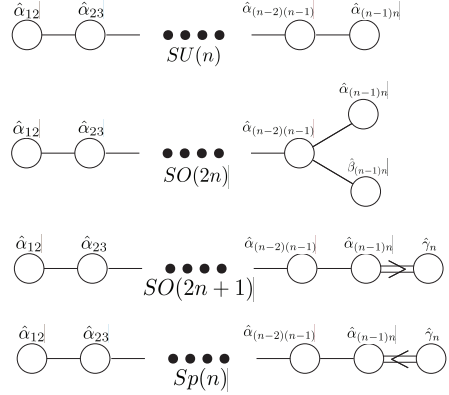}}
   \caption{The Dynkin diagrams of the classical Lie groups.}\label{F:dynkin}
   \end{figure}
\section{Complexified Lie algebras}
In this section, we will define the ``complexification'' of a Lie algebra, to build a bridge between this book and more advanced books which typically emphasize roots of a complexified Lie algebra.
\begin{defn} Let $V$ be an $n$-dimensional vector space over $\R$.  The \underline{complexification} of $V$ is defined as:
$$V_\C:=\{X+ Y\ii\mid X,Y\in V\}.$$
\end{defn}

Notice that $V_\C$ is an $n$-dimensional vector space over $\C$, with vector addition and scalar multiplication  defined in the obvious way:
\begin{align*}
(X_1+ Y_1\ii)+(X_2+ Y_2\ii)&:=(X_1+X_2)+(Y_1+Y_2)\ii, \\
(a+b\ii)\cdot(X+Y\ii)&:=(a\cdot X-b\cdot Y)+(b\cdot X+a\cdot Y)\ii,\end{align*}
for all $X,X_1,X_2,Y,Y_1,Y_2\in V$ and $a,b\in\R$.

If $V$ has an inner product, $\lb\cdot,\cdot\rb$, then this induces a natural complex-valued inner product on $V_\C$ defined as:
$$\lb X_1+Y_1\ii,X_2+Y_2\ii\rb_\C:=(\lb X_1,X_2\rb +\lb Y_1,Y_2\rb)+(\lb Y_1,X_2\rb -\lb X_1,Y_2\rb)\ii,$$
which is designed to satisfy all of the familiar properties of the standard hermitian inner product on $\C^n$ enumerated in Prop. 3.3.

If $G$ is a Lie group with Lie algebra $\mg$, then $\mg_\C$ inherits a ``complex Lie bracket'' operation defined in the most natural way:
$$[X_1+Y_1\ii,X_2+Y_2\ii]_\C:=([X_1,X_2]-[Y_1,Y_2])+([X_1,Y_2]+[Y_1,X_2])\ii.$$
This operation satisfies the familiar Lie bracket properties from Proposition 8.4 (with scalars $\lambda_1,\lambda_2\in\C$), including the Jacobi identity.

A potential confusion arises when $G\subset GL(n,\C)$ or $GL(n,\HH)$, since the symbol ``$\ii$'' already has a meaning for the entries of matrices in $\mg$.  In these cases, one should choose a different (initially unrelated) symbol, like ``$\mathbf{I}$'', for denoting elements of $\mg_\C$.  See Exercises~\ref{complexalgebras1} and~\ref{complexalgebras2} for descriptions of $so(n)_\C$ and $u(n)_\C$.

If $G$ is a compact Lie group, then the root space decomposition, $\mg=\tau\oplus\ml_1\oplus\cdots\oplus\ml_m$, induces a decomposition of $\mg_\C$ which is orthogonal with respect to $\lb\cdot,\cdot\rb_\C$:
$$\mg_\C = \tau_\C\oplus(\ml_1)_\C\oplus\cdots\oplus(\ml_m)_\C.$$
Notice that $\tau_\C$ is an abelian $\C$-subspace of $\mg_\C$ (``abelian'' means that every pair of vectors in $\tau_\C$ brackets to zero), and is maximal in the sense that it is not contained in any larger abelian $\C$-subspace of $\mg_\C$.

Each root space $\ml_i$ is associated with two roots, called $\alpha_i$ and $-\alpha_i$.  We intend to further decompose each $(\ml_i)_\C$ into two 1-dimensional $\C$-subspaces, one for each of these two roots.  That is, we will write:
\begin{equation}\label{splitagain}(\ml_i)_\C=\mg_{\overline{\alpha}_i}\oplus\mg_{\overline{-\alpha}_i},\end{equation}
with this notation defined as follows:
\begin{defn}\label{tyit}
If $\alpha$ is a root of $G$, and $\{E,F\}$ is an orthonormal basis of the corresponding root space, $\ml$, ordered so that the corresponding dual root is $\hat\alpha=[E,F]$, then:
\begin{enumerate}
\item Define the $\C$-linear function $\overline\alpha:\tau_\C\ra\C$ so that for all $X=X_1+X_2\ii\in\tau_\C$, we have $$\overline{\alpha}(X) := (-\ii)\cdot(\alpha(X_1)+\alpha(X_2)\ii) = \alpha(X_2)-\alpha(X_1)\ii.$$
\item Define $\mg_{\overline\alpha}:=\text{span}_\C\{E+F\ii\}=\{\lambda(E+F\ii)\mid\lambda\in\C\}\subset\ml_\C.$
\end{enumerate}
\end{defn}

If $\{E,F\}$ is a correctly-ordered basis for $\alpha$, then one for $-\alpha$ is $\{F,E\}$ or $\{E,-F\}$.  In Equation~\ref{splitagain}, notice that $\mg_{\overline\alpha}=\text{span}_\C\{E+F\ii\}$ and $\mg_{\overline{-\alpha}}=\text{span}_\C\{F+E\ii\}$ are orthogonal with respect to $\lb\cdot,\cdot\rb_\C$.

The space $\mg_{\overline\alpha}$ is well-defined, meaning independent of the choice of basis $\{E,F\}$.  To see this, notice that another correctly-ordered basis would look like $\{R_\theta E,R_\theta F\}$, where $R_\theta$ denotes a counterclockwise rotation of $\ml$ through angle $\theta$ (this assertion is justified in Exercise~\ref{bellpeppers}).  Setting $\lambda=e^{-i\theta}=\cos\theta-\ii\sin\theta$ gives:
\begin{eqnarray*}
\lambda\cdot(E+F\ii) & = & ((\cos\theta) E+(\sin\theta) F)+((\cos\theta) F-(\sin\theta) E)\ii \\
                     & = & (R_\theta E) + (R_\theta F)\ii.
\end{eqnarray*}
Thus, $\text{span}_\C\{E+F\ii\}=\text{span}_\C\{(R_\theta E)+(R_\theta F)\ii\}$.

The motivation for Definition~\ref{tyit} is the following:
\begin{prop} If $\alpha$ is a root of $G$, then for each $X\in\tau_\C$, the value $\overline\alpha(X)\in\C$ is an eigenvalue of the function $\ad_X:\mg_\C\ra\mg_\C$ (which sends $V\mapsto[X,V]_\C$), and each vector in $\mg_{\overline\alpha}$ is a corresponding eigenvector.
\end{prop}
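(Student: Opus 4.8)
The plan is to verify the eigenvalue equation $\ad_X(V) = \overline\alpha(X)\cdot V$ directly for a spanning vector $V$ of $\mg_{\overline\alpha}$, by unwinding three definitions: the complex Lie bracket on $\mg_\C$, complex scalar multiplication on $\mg_\C$, and the $\C$-linear functional $\overline\alpha$ from Definition~\ref{tyit}. Fix a correctly-ordered orthonormal basis $\{E,F\}$ of the root space $\ml$, so that $\hat\alpha=[E,F]$ and $\mg_{\overline\alpha}=\spann_\C\{E+F\ii\}$. Write an arbitrary $X\in\tau_\C$ as $X=X_1+X_2\ii$ with $X_1,X_2\in\tau$. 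By the defining property of the root $\alpha$ (Definition~\ref{D:dualroot}), we have $[X_k,E]=\alpha(X_k)F$ and $[X_k,F]=-\alpha(X_k)E$ for $k=1,2$.

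First I would compute $[X,E+F\ii]_\C$. Substituting into the definition $[A_1+B_1\ii,A_2+B_2\ii]_\C=([A_1,A_2]-[B_1,B_2])+([A_1,B_2]+[B_1,A_2])\ii$ with $A_1=X_1$, $B_1=X_2$, $A_2=E$, $B_2=F$, and then plugging in the four real bracket values above, the real part simplifies to $\alpha(X_2)E+\alpha(X_1)F$ and the imaginary part to $-\alpha(X_1)E+\alpha(X_2)F$. Next I would compute $\overline\alpha(X)\cdot(E+F\ii)$ independently: since $\overline\alpha(X)=\alpha(X_2)-\alpha(X_1)\ii$ by Definition~\ref{tyit}, applying the scalar-multiplication rule $(a+b\ii)\cdot(E+F\ii)=(aE-bF)+(bE+aF)\ii$ with $a=\alpha(X_2)$ and $b=-\alpha(X_1)$ yields exactly the same real part $\alpha(X_2)E+\alpha(X_1)F$ and imaginary part $-\alpha(X_1)E+\alpha(X_2)F$. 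Hence $[X,E+F\ii]_\C=\overline\alpha(X)\cdot(E+F\ii)$, so $E+F\ii$ is an eigenvector of $\ad_X$ with eigenvalue $\overline\alpha(X)$. Finally, since the complex bracket is $\C$-linear in its second argument, $[X,\lambda(E+F\ii)]_\C=\lambda\,\overline\alpha(X)(E+F\ii)=\overline\alpha(X)\cdot\lambda(E+F\ii)$ for every $\lambda\in\C$, which is the full claim since $\mg_{\overline\alpha}=\spann_\C\{E+F\ii\}$.

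I do not expect any genuine obstacle: the argument is a bookkeeping computation once the definitions are in place. The one place requiring care is keeping the two roles of ``$\ii$'' distinct — the formal symbol used to build elements $X+Y\ii$ of $\mg_\C$ versus honest $\C$-scalar multiplication on $\mg_\C$ — and correctly tracking the sign introduced by the factor $(-\ii)$ in the definition of $\overline\alpha$. As a sanity check one may also run the same computation with the basis $\{F,E\}$, which is a correctly-ordered basis for $-\alpha$, to see that $F+E\ii\in\mg_{\overline{-\alpha}}$ is an eigenvector with eigenvalue $\overline{-\alpha}(X)=-\overline\alpha(X)$, matching the splitting in Equation~\ref{splitagain}.
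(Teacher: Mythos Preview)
Your proposal is correct and follows essentially the same approach as the paper: a direct verification that $[X,E+F\ii]_\C=\overline\alpha(X)\cdot(E+F\ii)$ by unwinding the definitions. The only difference is cosmetic---the paper first invokes $\C$-linearity of both sides in $X$ to reduce to the case $X\in\tau$ (so $X_2=0$), making the computation a one-liner, whereas you carry out the full expansion for general $X=X_1+X_2\ii$; your longer computation is of course equivalent and checks out.
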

\begin{proof}
By $\C$-linearity, it suffices to verify this for $X\in\tau$, which is done as follows:
\begin{eqnarray*}
[X,E+F\ii]_\C & = & [X,E]+[X,F]\ii = \alpha(X) (F-E\ii)\\
                  & = &  (-\ii\cdot\alpha(X))(E+F\ii)=\overline\alpha(X)\cdot(E+F\ii).
\end{eqnarray*}
\end{proof}

For our general definition, we will use:
\begin{defn}
A non-zero $\C$-linear function $\omega:\tau_\C\ra\C$ is called a \underline{complex root} of $\mg_\C$ if there exists a non-zero $\C$-subspace $\mg_\omega\subset\mg_\C$ (called a \underline{complex root space}) such that for all $X\in\tau_\C$ and all $V\in\mg_\omega$ we have:
$$[X,V]_\C=\omega(X)\cdot V.$$
In other words, the elements of $\mg_\omega$ are eigenvectors of $\ad_X$ for each $X\in\tau_\C$, and $\omega$ catalogs the corresponding eigenvalues.
\end{defn}
The notations ``$\mg_\omega$'' and ``$\mg_{\overline{\alpha}}$'' are consistent because of:

\begin{prop} If $\alpha$ is a root of $G$, then $\overline{\alpha}$ is a complex root of $\mg_\C$, and all complex roots of $\mg_\C$ come from roots of $G$ in this way.  Thus, $\mg_\C$ decomposes uniquely as an orthogonal direct sum of complex root spaces:
\begin{eqnarray*}
\mg_\C  & = & \tau_\C\oplus\{\mg_\omega\mid\omega\text{ is a complex root of }\mg_\C\}\\
        & = & \tau_\C\oplus\{\mg_{\overline{\alpha}}\mid\alpha\text{ is a root of }G\}\\
        & = & \tau_\C\oplus\mg_{\overline{\alpha}_1}\oplus\mg_{-\overline\alpha_1}\oplus\cdots
                     \oplus\mg_{\overline{\alpha}_m}\oplus\mg_{-\overline\alpha_m}.
\end{eqnarray*}
\end{prop}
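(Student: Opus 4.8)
The plan is to establish three claims in turn: (a) each $\overline\alpha$ is a complex root with complex root space $\mg_{\overline\alpha}$; (b) the listed direct sum decomposition of $\mg_\C$ holds and is orthogonal with respect to $\lb\cdot,\cdot\rb_\C$; and (c) there are no other complex roots, so the decomposition is in fact the full eigenspace decomposition and is unique. Claim (a) is essentially already done: the proposition immediately preceding this one shows $[X, E+F\ii]_\C = \overline\alpha(X)\cdot(E+F\ii)$ for $X\in\tau$, hence for all $X\in\tau_\C$ by $\C$-linearity, and $\overline\alpha$ is nonzero because $\alpha$ is nonzero (a root of $G$ is nonzero by definition); so $\overline\alpha$ satisfies the definition of a complex root with complex root space containing $\mg_{\overline\alpha}=\spann_\C\{E+F\ii\}$.

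For claim (b), I would start from the real root space decomposition $\mg=\tau\oplus\ml_1\oplus\cdots\oplus\ml_m$ of Theorem~\ref{decompp}, which complexifies termwise to $\mg_\C = \tau_\C\oplus(\ml_1)_\C\oplus\cdots\oplus(\ml_m)_\C$; orthogonality with respect to $\lb\cdot,\cdot\rb_\C$ is immediate from the formula defining $\lb\cdot,\cdot\rb_\C$ in terms of $\lb\cdot,\cdot\rb$. Then for each $i$ I would verify the internal splitting $(\ml_i)_\C = \mg_{\overline\alpha_i}\oplus\mg_{-\overline\alpha_i}$ of Equation~\ref{splitagain}: since $\{E_i,F_i\}$ is an $\R$-basis of $\ml_i$, the four vectors $E_i, F_i, E_i\ii, F_i\ii$ span $(\ml_i)_\C$ over $\R$, equivalently $E_i$ and $F_i$ span it over $\C$, and the change of basis to $E_i+F_i\ii$ and $F_i+E_i\ii$ is invertible over $\C$ (its determinant is $1-\ii^2=2\neq 0$ when we use the symbol $\ii$ formally, i.e. $\begin{psmallmatrix}1&\ii\\ \ii&1\end{psmallmatrix}$ is invertible). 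Orthogonality of $\mg_{\overline\alpha_i}$ and $\mg_{-\overline\alpha_i}$ is a one-line computation with $\lb\cdot,\cdot\rb_\C$, already noted in the text. Assembling the pieces gives exactly the three displayed lines.

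For claim (c), suppose $\omega$ is any complex root with complex root space $\mg_\omega$, and pick $0\neq V\in\mg_\omega$. Decompose $V = V^0 + \sum_i V^i$ along $\mg_\C = \tau_\C\oplus\bigoplus_i(\ml_i)_\C$, and further decompose each $V^i\in(\ml_i)_\C$ along $\mg_{\overline\alpha_i}\oplus\mg_{-\overline\alpha_i}$ using claim (b); write $V$ as a $\C$-combination of $E_i+F_i\ii$, $F_i+E_i\ii$, and a $\tau_\C$-part. Applying $\ad_X$ for $X\in\tau$ and using that $\ad_X$ is zero on $\tau_\C$ and acts by the scalars $\overline{\alpha_i}(X)$, $-\overline{\alpha_i}(X)$ on the respective one-dimensional pieces, the eigenvector equation $[X,V]_\C=\omega(X)V$ forces, for each component that is nonzero, the corresponding scalar to equal $\omega(X)$ for all $X\in\tau$. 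I would then invoke strong regularity: by Proposition~\ref{regvec} there is a strongly regular $X\in\tau$, for which the numbers $0, \pm\alpha_1(X),\dots,\pm\alpha_m(X)$ — equivalently the eigenvalues $\overline{\pm\alpha_i}(X) = \mp\alpha_i(X)\ii$ together with $0$ — are pairwise distinct (here one uses that the $\alpha_i(X)^2$ are distinct and nonzero, so the $\pm\alpha_i(X)$ are $2m$ distinct nonzero reals). Hence at most one component of $V$ is nonzero, the $\tau_\C$-component is excluded since $\omega\neq 0$, and so $V\in\mg_{\pm\overline{\alpha_i}}$ for a single $i$ and $\omega = \overline{\pm\alpha_i}$. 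This shows the only complex roots are the $\overline{\pm\alpha_i}$ and that $\mg_\omega$ is exactly the corresponding one-dimensional space, giving uniqueness.

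The main obstacle is claim (c) — specifically, pinning down that $\mg_\omega$ cannot be a "diagonal" subspace meeting several of the pieces $\mg_{\pm\overline\alpha_i}$ nontrivially, and cannot involve $\tau_\C$. The cleanest way past this is exactly the strong-regularity argument above: a single strongly regular $X$ already separates all $2m+1$ pieces by distinct $\ad_X$-eigenvalues, so any joint eigenvector of all $\ad_X$, $X\in\tau$, must lie in one piece. One should be slightly careful that "strongly regular" as defined controls the $\alpha_i(X)^2$; to get the $2m$ values $\pm\alpha_i(X)$ distinct one notes that $\alpha_i(X)^2\neq\alpha_j(X)^2$ and $\alpha_i(X)^2\neq 0$ preclude $\alpha_i(X) = \pm\alpha_j(X)$ for $i\neq j$ and $\alpha_i(X)=0$. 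Everything else is routine linear algebra over $\C$ plus the bracket formulas already recorded.
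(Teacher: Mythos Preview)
The paper states this proposition without proof; it is immediately followed by the sentence ``One advantage of the complex setting is that \ldots''\ and then Lemma~\ref{complexrootsums}. So there is nothing to compare against, and your task is really to supply the omitted argument.

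Your plan does this correctly. Claim~(a) is indeed contained in the preceding proposition. Claim~(b) is routine: complexifying the real orthogonal decomposition $\mg=\tau\oplus\ml_1\oplus\cdots\oplus\ml_m$ and then splitting each $(\ml_i)_\C$ via the invertible change of basis $\{E_i,F_i\}\to\{E_i+F_i\ii,\,F_i+E_i\ii\}$ gives exactly the displayed sum; orthogonality of $\mg_{\overline\alpha_i}$ and $\mg_{-\overline\alpha_i}$ was already noted in the text. Claim~(c) is the only substantive step, and your strongly-regular-vector argument is the natural one: for such an $X\in\tau$ the $2m+1$ scalars $0,\,\overline{\pm\alpha_1}(X),\ldots,\overline{\pm\alpha_m}(X)=0,\,\mp\alpha_1(X)\ii,\ldots,\mp\alpha_m(X)\ii$ are pairwise distinct, so any simultaneous eigenvector of all $\ad_X$ must lie in a single summand. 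Your parenthetical check that ``$\alpha_i(X)^2$ distinct and nonzero'' $\Rightarrow$ ``$\pm\alpha_i(X)$ are $2m$ distinct nonzero reals'' is exactly what is needed.

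One small citation fix: the existence of a strongly regular vector is Proposition~\ref{regularopendense}, not Proposition~\ref{regvec} (the latter describes the $\ad_X^2$-eigenspace decomposition once such an $X$ is in hand). You don't actually need Proposition~\ref{regvec} here, since you are working with $\ad_X$ on $\mg_\C$ directly rather than with $\ad_X^2$ on $\mg$.
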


One advantage of the complex setting is that $\overline\alpha_i$ and $-\overline\alpha_i$ correspond to different complex root spaces, so complex roots correspond one-to-one with complex root spaces.  Another advantage is that the following complexified version of Theorem~\ref{rootsums} has a short proof:

\begin{lem}\label{complexrootsums}
Suppose that $\omega_1$ and $\omega_2$ are complex roots of $\mg_\C$.  If $V_1\in\mg_{\omega_1}$ and $V_2\in\mg_{\omega_2}$, then
$$[V_1,V_2]_\C\in\begin{cases} \tau &\text{ if }\omega_1=-\omega_2\\ \mg_{\omega_1+\omega_2}&\text{ if } \omega_1+\omega_2\text{ is a complex root of }\mg_\C \\ \{0\}&\text{ otherwise.}\end{cases}$$
\end{lem}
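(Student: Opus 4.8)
The plan is to run the Jacobi-identity argument from the proof of Proposition~\ref{EiFi}, which becomes much cleaner after complexifying because each complex root space genuinely consists of eigenvectors of every $\ad_X$, $X\in\tau_\C$. Fix $V_1\in\mg_{\omega_1}$ and $V_2\in\mg_{\omega_2}$, and set $U:=[V_1,V_2]_\C$. For an arbitrary $X\in\tau_\C$ I would compute, using the Jacobi identity for $[\,\cdot\,,\cdot\,]_\C$ together with the defining relations $[X,V_1]_\C=\omega_1(X)V_1$ and $[X,V_2]_\C=\omega_2(X)V_2$,
$$[X,U]_\C=[[X,V_1]_\C,V_2]_\C+[V_1,[X,V_2]_\C]_\C=\bigl(\omega_1(X)+\omega_2(X)\bigr)\,U,$$
so that $U$ satisfies the defining relation of the complex root space attached to the functional $\omega_1+\omega_2$.

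Next I would insert this into the unique orthogonal decomposition $\mg_\C=\tau_\C\oplus\bigoplus_{\omega}\mg_\omega$ from the proposition just above the lemma. Write $U=Z_0+\sum_\omega Z_\omega$ with $Z_0\in\tau_\C$ and $Z_\omega\in\mg_\omega$. Since $\tau_\C$ is abelian we have $[X,Z_0]_\C=0$, so applying $\ad_X$ and comparing with $[X,U]_\C=(\omega_1+\omega_2)(X)\,U$ splits, by directness of the sum, into
$$(\omega_1+\omega_2)(X)\,Z_0=0\qquad\text{and}\qquad\bigl(\omega(X)-(\omega_1+\omega_2)(X)\bigr)\,Z_\omega=0$$
for every $X\in\tau_\C$ and every complex root $\omega$. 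A three-way case split then finishes it. If $\omega_1=-\omega_2$, the second family of equations forces every $Z_\omega=0$ (no complex root is the zero functional), leaving $U=Z_0\in\tau_\C$. If $\omega_1+\omega_2$ is itself a complex root, the first equation forces $Z_0=0$ and the second forces $Z_\omega=0$ for $\omega\neq\omega_1+\omega_2$, leaving $U\in\mg_{\omega_1+\omega_2}$. If $\omega_1+\omega_2$ is neither zero nor a complex root, then $Z_0=0$ and every $Z_\omega=0$, so $U=0$.

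I do not expect a genuine obstacle here — this brevity is exactly the advertised payoff of passing to $\mg_\C$, where $\ad_X$ is diagonalizable rather than merely a rotation. The only points that need a little care are: invoking the \emph{uniqueness} of the complex root space decomposition so that the component equations may be read off independently; using that each $\mg_\omega$ is nonzero and that $\omega$ is by definition a nonzero $\C$-linear functional, so that ``$\omega(X)Z_\omega=0$ for all $X$'' really forces $Z_\omega=0$; and noting that distinct complex roots are distinct functionals on $\tau_\C$, so ``$\omega=\omega_1+\omega_2$'' singles out at most one summand. This should be contrasted with the substantially longer trigonometric computation required for the real statement, Theorem~\ref{rootsums}.
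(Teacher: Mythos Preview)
Your proposal is correct and follows essentially the same route as the paper: compute $[X,[V_1,V_2]_\C]_\C$ via the Jacobi identity to see that $U=[V_1,V_2]_\C$ is an $(\omega_1+\omega_2)$-eigenvector of every $\ad_X$, then read off the three cases. The paper simply writes ``from which the three cases follow'' where you explicitly expand $U$ in the decomposition $\mg_\C=\tau_\C\oplus\bigoplus_\omega\mg_\omega$ and compare components; your version is a faithful unpacking of that sentence (and your conclusion $U\in\tau_\C$ in the first case is what is meant---the ``$\tau$'' in the statement is a typo for $\tau_\C$).
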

\begin{proof}
Omitting the ``$\C$'' subscripts of Lie brackets for clarity, the complexified version of the Jacobi identity gives that for all $X\in\tau_\C$:
\begin{eqnarray*}
[X,[V_1,V_2]] & = & -[V_1,[V_2,X]]-[V_2,[X,V_1]]\\
              & = & -[[X,V_2],V_1]+[[X,V_1],V_2]]\\
              & = & (\omega_1(X)+\omega_2(X))[V_1,V_2],
\end{eqnarray*}
from which the three cases follow.
\end{proof}

\begin{proof}[Alternative proof of Theorem~\ref{rootsums}]
For distinct indices $i,j$,
$$\mg_{\pm\overline\alpha_i}=\text{span}_\C\{E_i\pm F_i\ii\}\,\text{ and }\,\,\mg_{\pm\overline\alpha_j}=\text{span}_\C\{E_j\pm F_j\ii\}.$$
Lemma~\ref{complexrootsums} says that following two brackets
\begin{gather*}
[E_i+F_i\ii,E_j+F_j\ii]_\C = ([E_i,E_j]-[F_i,F_j])+([E_i,F_j]+[F_i,E_j])\ii,\\
[E_i+F_i\ii,E_j-F_j\ii]_\C = ([E_i,E_j]+[F_i,F_j])+(-[E_i,F_j]+[F_i,E_j])\ii
\end{gather*}
lie respectively in $\mg_{\overline\alpha_i+\overline\alpha_j}$ and $\mg_{\overline\alpha_i-\overline\alpha_j}$.  The convention here is that $\mg_{\omega}:=\{0\}$ if $\omega$ is not a complex root.  Write $\ml_{ij}^+=\text{span}\{E_{ij}^+,F_{ij}^+\}$ and $\ml_{ij}^-=\text{span}\{E_{ij}^-,F_{ij}^-\}$, where these basis vectors may be zero.  For the sum of the above two vectors, we have:
\begin{eqnarray*}
2[E_i,E_j] + 2[F_i,E_j]\ii & \in & \mg_{\overline\alpha_i+\overline\alpha_j}\oplus\mg_{\overline\alpha_i-\overline\alpha_j}\\
                           & =   &\mg_{\overline{\alpha_i+\alpha_j}}\oplus\mg_{\overline{\alpha_i-\alpha_j}}\\
                           & =   & \text{span}_\C\{E_{ij}^++F_{ij}^+\ii\}\oplus \text{span}_\C\{E_{ij}^-+F_{ij}^-\ii\}.
\end{eqnarray*}
Thus,
$$2[E_i,E_j]\in\text{span}\{E_{ij}^+,F_{ij}^+,E_{ij}^-,F_{ij}^-\} = \ml_{ij}^+\oplus\ml_{ij}^-.$$
\end{proof}
\section{Exercises}
Unless specified otherwise, assume that $G$ is a compact Lie group with Lie algebra $\mg$, and $T\subset G$ is a maximal torus with Lie algebra $\tau\subset\mg$.
\begin{ex} For each $G\in\{SU(n),SO(2n),SO(2n+1),Sp(n)\}$, how many roots does $G$ have?
\end{ex}
\begin{ex}\label{dualrootex} For any linear function $\alpha:\tau\ra\R$, prove there exists a unique vector $\hat\alpha\in\tau$ such that for all $X\in\tau$, $\alpha(X)=\lb \hat\alpha,X\rb$.
\vspace{.05in}
\newline \emph{Hint: Define $\hat\alpha$ in terms of an orthonormal basis of $\tau$.}
\end{ex}
\begin{ex} If one begins with a different maximal torus of $G$, show that this does not effect the equivalence class of the system of dual roots of $G$ or the isomorphism class of $W(G)$.
\end{ex}
\begin{ex}\label{spex} Create tables describing all non-zero brackets of basis elements of $sp(n)$, as was done in this chapter for the other classical groups.
\end{ex}
\begin{ex} If $G=G_1\times G_2$, describe the roots and dual roots and Weil group of $G$ in terms of those of $G_1$ and $G_2$.
\end{ex}
\begin{ex} If $\hat\alpha$ is a dual root with root space $\ml=\text{span}\{E,F\}$, prove that $\text{span}\{E,F,\hat\alpha\}$ is a subalgebra of $\mg$ which is isomorphic to $su(2)$.
\begin{ex}\label{bellpeppers}
If $\{E,F\}$ is an ordered orthonormal basis of the root space $\ml$, then any other ordered orthonormal basis of $\ml$ will of be of the form $\{E'=L_gE,F'=L_gF\}$ for some $g\in O(2)$.  Show that $\hat\alpha=[E,F]$ and $\hat\alpha'=[E',F']$ are equal if and only if $g\in SO(2)$; otherwise $\hat\alpha'=-\hat\alpha$.
\end{ex}
\end{ex}
\begin{ex}
Prove that the center of $G$ equals the intersection of all maximal tori of $G$.
\end{ex}
\begin{ex}Define the \underline{centralizer} of $g\in G$ as $$C(g) := \{x\in G\mid xg=xg\}.$$  Let $C^0(g)$ denote the identity component of $C(g)$, as defined in Exercise~7.6.  Prove that $C^0(g)$ equals the union of all maximal tori of $G$ which contain $g$.
\vspace{.1in}\newline
\emph{HINT: if $x\in C^0(g)$, then $x$ belongs to a maximal torus of $C^0(g)$, which can be extended to a maximal torus of $G$.}
\end{ex}
\begin{ex}
If $a,b\in T$ are conjugate in $G$, prove that they are conjugate in $N(T)$.  That is, if $g\cdot a\cdot g^{-1}=b$ for some $g\in G$, prove that $h\cdot a\cdot h^{-1}=b$ for some $h\in N(T)$. \vspace{.1in}\newline\emph{HINT: If $g\cdot a\cdot g^{-1}=b$, then $T$ and $g\cdot T\cdot g^{-1}$ are two maximal tori of $C^0(b)$, so one is a conjugate of the other inside $C^0(b)$.  That is, there exists $x\in C^0(b)$ such that $xg\cdot T \cdot g^{-1} x^{-1}=T$. Now choose $h=xg$.}
\end{ex}
\begin{ex} When we studied double covers in Section 8.7, we claimed:
$$sp(1) \cong so(3),\,\,sp(1)\times sp(1) \cong so(4),\,\,\, sp(2)  \cong so(5),\,\,\, su(4)  \cong so(6).$$
For each of these Lie algebra isomorphism, show that the corresponding pair of Dynkin diagrams are identical.  Show that no other pair of Dynkin diagrams of classical groups is identical, and thus that there are no other classical Lie algebra isomorphims.
\end{ex}
\begin{ex}
Draw the root systems for the classical rank 2 groups: $SU(3)$, $SO(4)$, $SO(5)$, $Sp(2)$.  The only other rank 2 root system is pictured below:
\begin{figure}[h!]
   \scalebox{.15}{\includegraphics{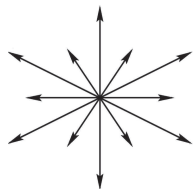}}
   \caption{The root system of the exceptional group $G_2$.}
   \end{figure}
\end{ex}
\begin{ex}
Prove that every rank 2 root system is one of the root systems from the previous exercise.
\vspace{.1in}\newline
\emph{Hint: The minimal angle, $\theta$, between any pair of roots must be $30^\circ$, $45^\circ$, $60^\circ$, or  $90^\circ$.  If $\alpha,\beta_1$ are roots which achieve this minimal angle, prove that for any integer $n$, there exists a root $\beta_n$ which forms an angle of $n\theta$ with $\alpha$, for example $\beta_2=-w_{\beta_1}\star\alpha$.  Show $|\beta_{n_1}|=|\beta_{n_2}|$ if $n_1$ and $n_2$ are either both odd or both even.}
\end{ex}
\begin{ex}\label{regvector} Prove Proposition~\ref{regularopendense}, which says that the strongly regular vectors of $G$ form an open dense subset of $\tau$.
\end{ex}
\begin{ex} Let $(\tau,R)$ be a root system and let $\alpha,\beta\in R$.  If $\angle(\alpha,\beta)$ is acute, prove that $\alpha-\beta\in R$.  If $\angle(\alpha,\beta)$ is obtuse, prove that $\alpha+\beta\in R$.
\vspace{.1in}\newline\emph{Hint: See the proof of Lemma~\ref{lobtuse}.  Note: From the dual root system of $G$, one can reconstruct its entire Lie algebra and bracket operation, which at least requires knowing which dual roots add or subtract to which dual roots.  This exercise give a glimpse of how such information can be obtained just from data about the angles between dual roots.}
\end{ex}
\begin{ex}
Prove that the Lie algebra of the center of $G$ equals
$$\mathfrak{z}(\mg):=\{A\in\mg\mid [A,X]=0\text{ for all }X\in\mg\},$$
which is called the \underline{center of $\mg$}.
\end{ex}
\begin{ex} For each of $G\in\{SU(n),SO(2n),SO(2n+1),Sp(n)\}$, check that $W(G)$ acts transitively on the set of dual roots of a fixed length.  That is, if $\hat\alpha,\hat\beta$ are dual roots with the same length, then there exists $w\in W(G)$ such that $w\star\hat\alpha=\hat\beta$.
\end{ex}
\begin{ex} Let $\Delta=\{\alpha_1,...,\alpha_t\}$ be a base of the root system $(\tau,R)$.  Prove that for any $w\in W(\tau,R)$, $w\star\Delta:=\{w\star\alpha_1,...,w\star\alpha_t\}$ is also a base of the root system.  It is also true that every base of the root system equals $w\star\Delta$ for some $w\in W(\tau,R)$.
\end{ex}
\begin{ex}\label{notuniqueroots}
If Lemma~\ref{notpar} were false, so that for example $\hat\alpha_1=\hat\alpha_2$, which is equivalent to $\alpha_1=\alpha_2$, show that this would allow multiple ways for the 4-dimensional space $\ml_1\oplus\ml_2$ to split into a pair of 2-dimensional $\Ad_T$-invariant spaces, for example:$$\ml_1\oplus\ml_2=\text{span}\{E_1+E_2,F_1+F_2\}\oplus\text{span}\{E_1-E_2,F_1-F_2\}.$$
Thus, the decomposition of Theorem~\ref{decompp} would not be unique, and we would therefore have extra roots and dual roots corresponding to the extra possible $\Ad_T$-invariant decompositions of $\mg$.
\end{ex}
\begin{ex}\label{complexalgebras1} An element of $so(n)_\C$ has the form $X=X_1+X_2\ii$ for some $X_1,X_2\in so(n)$.  Interpret such an $X$ as an element of $$so(n,\C):=\{A\in M_n(\C)\mid A+A^T=0\}\subset M_n(\C).$$
Via this interpretation, show that the complexified Lie bracket operation in $so(n)_\C$ becomes identified with the following operation in $so(n,\C)$: $[A,B]_\C=AB-BA.$
\end{ex}
\begin{ex}\label{complexalgebras2}\hspace{.1in}
\begin{enumerate}
\item Prove that every $X\in gl(n,\C)$ can be uniquely expressed as $X=X_1+X_2\ii$ for $X_1,X_2\in u(n)$.  Further, $X\in sl(n,\C)$ if and only if $X_1,X_2\in su(n)$.
\vspace{.1in}\newline\emph{Hint: $X = \frac{X-X^*}{2} +\frac{X+X^*}{2\ii}\ii.$}

\item Use the above decomposition to identify $u(n)_\C\cong gl(n,\C)$ and $su(n)_\C\cong sl(n,\C)$.  Show that the complex Lie bracket operations in $u(n)_\C$ and $su(n)_\C$ become identified with the operations in $gl(n,\C)$ and $sl(n,\C)$ defined as: $$[A,B]_\C=AB-BA.$$
\end{enumerate}
\end{ex}
\begin{ex}\label{subrootsystem}
Recall the injective function $\rho_n:M_n(\C)\ra M_{2n}(\R)$ from Chapter~2.
\begin{enumerate}
\item Show that $\rho_n(SU(n))$ is a subgroups of $SO(2n)$ which is isomorphic to $SU(n)$.
\item Show that $\rho_n(su(n))$ is a subalgebra of $so(2n)$ which is isomorphic to $su(n)$.
\item Show that $\lb \rho_n(X),\rho_n(Y)\rb = 2\cdot\lb X,Y\rb$ for all $X,Y\in M_n(\C)$, so $\rho_n$ provides an non-orthogonal equivalence between the system of dual root of $su(n)$ and a subsystem of the system of dual root of $so(2n)$.
\end{enumerate}
\end{ex}

\bibliographystyle{amsplain}

\printindex
\end{document}